\newcommand{\indel}[1]{\partial/\partial #1}
\newtheorem{theorem}{Theorem}%[section]
\newtheorem{lemma}[theorem]{Lemma}
\newtheorem{proposition}[theorem]{Proposition}
\newtheorem{question}{Question}
\theoremstyle{remark}
\newtheorem{definition}[theorem]{Definition}
 \def\RR{{\mathbb R}}
\def\NN{{\mathbb N}} 
\newcommand{\lieg}{\mathfrak{g}}
\newcommand{\lieh}{\mathfrak{h}}
\newcommand{\liep}{\mathfrak{p}}
\newcommand{\liel}{\mathfrak{l}}
\newcommand{\Ad}{\mbox{Ad }}
\newcommand{\ad}{\mbox{ad }}
\title{Quasihomogeneous three-dimensional real analytic Lorentz metrics do not exist}
\author{Sorin Dumitrescu} \address{Universit\'e C\^ote d'Azur, Universit\'e Nice Sophia Antipolis, CNRS, Laboratoire J.-A. Dieudonn\'e, UMR 7351, Parc Valrose, 06108 Nice Cedex 2, France} \email{dumitres@unice.fr}
\author{Karin Melnick}\address{University of Maryland, College Park,  MD 20742,  USA} \email{karin@math.umd.edu}
\keywords{real analytic Lorentz metrics, transitive Killing Lie algebras, local differential invariants}
\thanks{The authors acknowledge support from U.S. National Science Foundation grants DMS-1107452, 1107263, 1107367, "RNMS: Geometric Structures and Representation Varieties (the GEAR Network)."  Melnick was also supported during work on this project by a Centennial Fellowship from the American Mathematical Society and by NSF grants DMS-1007136 and 1255462.\\
MSC 2010: 53A55, 53B30, 53C50}
\begin{document}
\begin{abstract} We show that a germ  of  a real analytic Lorentz metric  on $\RR^3$  which is {\it locally homogeneous}  on an open set containing the origin in its closure is necessarily locally homogeneous.  We classifiy Lie algebras that can act \emph{quasihomogeneously}---meaning they act transitively on an open set admitting the origin in its closure, but  not at the origin---and isometrically for such a metric.  In the case that the isotropy at the origin of a quasihomogeneous action is semi-simple, we provide a complete set of normal forms of the metric and the action.  
\end{abstract}

\maketitle

\section{Introduction}

A Riemannian or pseudo-Riemannian metric is called {\it locally homogeneous} if any two points can be connected by flowing along a finite sequence of local Killing fields.  The study of such metrics is a traditional field in differential geometry.  In dimension two, they are exactly the semi-Riemannian  metrics of constant sectional curvature. Locally homogeneous  Riemannian metrics of dimension three are the subject of Thurston's 3-dimensional geometrization program~\cite{thurston}. The classification of compact locally homogeneous  Lorentz $3$-manifolds was given in~\cite{dumzeg}.

The most symmetric geometric structures after the locally homogeneous ones are those  which are~{\it quasihomogeneous}, meaning they are locally homogeneous on an open set containing the origin in its closure, but not locally homogeneous in the neighborhood of the origin. In particular, all the scalar invariants  of a quasihomogeneous  geometric structure are constant. Recall that, for  Riemannian metrics, constant scalar invariants implies local homogeneity (see~\cite{Tri} for an effective result).

 In a recent joint work with A. Guillot, the first author  obtained   the  classification of germs of quasihomogeneous, real analytic, torsion free, affine connections on surfaces~\cite{dumiguillot}.  The article~\cite{dumiguillot} also classifies the
 quasihomogeneous germs of real analytic, torsion free, affine connections which extend to {\it compact} surfaces. In particular, such germs  of  quasihomogeneous connections do exist.
 
 The first author proved in~\cite{Dumitrescu} that   {\it a real analytic Lorentz metric on a compact $3$-manifold which is locally homogeneous on a nontrivial open set is locally homogeneous on all of the manifold}.  In other words, quasihomogeneous real analytic Lorentz metrics do not extend  to {\it compact} threefolds.  The same is known to be true, by work of the second author, for real analytic Lorentz  metrics on compact manifolds of  higher dimension, under the assumptions that the Killing algebra is semisimple, the metric is geodesically complete, and the universal cover is acyclic~\cite{Melnick}.  In the smooth category, A. Zeghib proved in~\cite{Zeghib} that compact Lorentz $3$-manifolds which admit essential Killing fields are necessarily locally homogeneous.

Here we  simplify   arguments  of~\cite{Dumitrescu} and   introduce   new ideas  in order to dispense with the compactness assumption and  prove  the following local result:

\begin{theorem} \label{main} Let~$g$ be a   real-analytic Lorentz metric  in a  connected open   neighborhood  $U$ of the origin in~$\mathbf{R}^3$. If  $g$  is locally homogeneous on a nontrivial open subset in $U$, then
$g$ is locally homogeneous on all of $U$.
\end{theorem}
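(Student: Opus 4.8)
The plan is to prove the contrapositive in spirit: assume $g$ is locally homogeneous on a nonempty open set $V \subset U$ but \emph{not} locally homogeneous near some point of $\overline{V} \setminus V$, which after translation we take to be the origin, and derive a contradiction. The central object is the sheaf of local Killing fields of $g$. On the open set $V$ where $g$ is locally homogeneous, the Killing algebra acts transitively, so at each point of $V$ the evaluation of the Killing fields spans the tangent space. Because $g$ is real analytic, the Killing fields are real analytic, and a Killing field defined on a connected piece of $V$ extends (by analytic continuation along paths, in the spirit of Nomizu's theorem and its local/analytic refinements) to a Killing field on a neighborhood of any point in the closure, in particular near the origin. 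Thus I expect to obtain a finite-dimensional Lie algebra $\lieg$ of germs of Killing fields at the origin whose evaluation spans the tangent space on $V$ but \emph{drops rank} at the origin; this rank drop is exactly the failure of local homogeneity at $0$.

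\medskip

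First I would set up the isotropy data at the origin. Since the $\lieg$-orbits have full dimension on $V$ but the orbit through $0$ is lower-dimensional, the isotropy subalgebra $\lieh \subset \lieg$ at $0$ is strictly larger than the generic isotropy, and its isotropy representation on $T_0\mathbf{R}^3$ preserves the Lorentz form $g_0$. The linear isotropy thus embeds in $\mathfrak{o}(2,1) \cong \mathfrak{sl}_2(\mathbf{R})$. I would exploit the quasihomogeneity to constrain $\lieh$: the transition from the full-rank locus $V$ to the degenerate origin forces a one-parameter (or larger) family of isotropy directions, and the structure of $\mathfrak{o}(2,1)$ severely limits the possibilities for $\lieh$ and for the pair $(\lieg, \lieh)$. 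This is precisely the classification of quasihomogeneously-acting Lie algebras promised in the abstract, organized by the nature of the isotropy (for instance whether the linear isotropy at $0$ is semisimple, unipotent, or elliptic).

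\medskip

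The heart of the argument is to show that \emph{no} such configuration $(\lieg, \lieh)$ can actually be realized by a real analytic Lorentz metric with the rank dropping only at $0$. The strategy is to use local differential invariants: on the homogeneous open set $V$ all scalar curvature invariants (the scalar curvature, and the functions built from the Ricci and curvature tensors and their covariant derivatives) are constant. By real analyticity these invariants extend continuously, hence remain constant, on all of $U$ including a neighborhood of $0$. I would then argue that constancy of a sufficiently rich collection of such invariants, together with the Lorentz signature, forces the metric to be locally homogeneous near $0$ after all---contradicting the rank drop. Concretely, one analyzes the Killing equations and the curvature in an adapted analytic frame near $0$, integrating the constraints imposed by $\lieh$ and by the constancy of invariants; when the isotropy is semisimple one can diagonalize the isotropy action and write explicit normal forms (the complete normal forms advertised in the abstract), and in each case check that the would-be quasihomogeneous metric is in fact homogeneous or else inconsistent with analyticity.

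\medskip

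The main obstacle, I expect, is the case analysis in the non-semisimple isotropy situations, where the linear isotropy at $0$ is unipotent (a nontrivial nilpotent in $\mathfrak{sl}_2(\mathbf{R})$). There the isotropy representation does not diagonalize, explicit normal forms are harder to produce, and one must work more subtly with the filtration induced by the nilpotent action and with higher-order jets of the metric. Controlling how the Killing fields degenerate as one approaches the origin---ensuring that the formal/analytic extension genuinely lowers the orbit dimension in a way compatible with a smooth Lorentz metric---is the delicate point, and I anticipate it requires a careful Taylor-expansion analysis of the Killing equations rather than any soft argument. The semisimple case, by contrast, should reduce to verifying that the explicit normal forms all extend to genuinely homogeneous metrics.
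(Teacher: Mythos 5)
Your overall architecture --- analytic continuation of Killing fields \`a la Nomizu to get a finite-dimensional Killing algebra $\lieg$ whose rank drops at the origin, followed by a classification of the possible isotropy at $0$ inside $\mathfrak{o}(2,1)$ and a case analysis (semisimple versus unipotent) --- matches the paper's skeleton. But the step you call the ``heart of the argument'' contains a genuine gap: you propose to conclude by observing that all scalar curvature invariants are constant on the homogeneous locus, hence constant on $U$ by analyticity, and that ``constancy of a sufficiently rich collection of such invariants \ldots forces the metric to be locally homogeneous near $0$.'' That implication is true for Riemannian metrics (Pr\"ufer--Tricerri--Vanhecke) but is \emph{false} in Lorentzian signature: because $O(2,1)$ is non-compact, its orbits on the jet spaces $V^{(k)}$ need not be closed or separated by polynomial invariants, and there exist non-homogeneous Lorentzian metrics (e.g.\ plane-wave type) with all scalar invariants constant. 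Indeed the paper's introduction points out that every quasihomogeneous structure automatically has constant scalar invariants, which is precisely why the theorem is not a soft consequence of them. The correct criterion is Singer's theorem, which requires the image of the $k$-jet map $g^{(k)}: R(U)\to V^{(k)}$ to be a \emph{single} $PSL(2,\RR)$-orbit, a strictly stronger condition than constancy of invariants; the paper closes this gap either by exhibiting a $\lieg$-invariant isotropic (or timelike) vector field and reducing the structure group to a unipotent (or compact) subgroup, where the Kostant--Rosenlicht theorem guarantees orbits are closed so the boundary jet lies in the same orbit, or, in the semisimple case, by writing the explicit normal forms $g_{C,D}=dx^2+dhdz+Cz^2dh^2+Dzdxdh$ and producing by hand a fourth Killing field not vanishing at $0$.

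Two further points you would need to supply. First, before any case analysis one must pin down $\lieg$ precisely: the paper shows $\dim\lieg=3$ and $\lieg$ solvable non-unimodular via the function $\mathrm{vol}_g(K_1,K_2,K_3)$ (a unimodular algebra preserving the volume form would make this function a nonzero constant on the homogeneous locus yet zero on the degeneracy set $S$), and rules out elliptic isotropy at points of $S$; your sketch leaves the size and structure of $\lieg$ open. Second, your expectation that the unipotent case requires a delicate Taylor expansion of the Killing equations is not how the paper proceeds: there the argument is that the Ricci operator is forced to be nilpotent of order two with a single eigenvalue $0$ (using the Calvaruso--Kowalski classification of Ricci operators of left-invariant Lorentz metrics on nonunimodular $3$-dimensional groups), which yields a $\lieg$-invariant isotropic vector field $W$ with $\mathrm{Ricci}(u,u)=g(W,u)^2$, and then the Kostant--Rosenlicht reduction finishes. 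Without some substitute for these orbit-closedness arguments, your proof does not reach the conclusion.
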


As a by-product of this new proof, we classifiy Lie algebras that can act isometrically for  a three-dimensional Lorentz metric and   \emph{quasihomogeneously}, meaning they act transitively on an open set admitting the origin in its closure, but  not at the origin.  In the case that the isotropy at the origin of such a quasihomogeneous action is semisimple, we provide a complete set of normal forms of the metric and the action, which, by Theorem \ref{main} above, are all locally homogeneous (see Proposition~\ref{prop.ss.classification} and Proposition~\ref{second}).

We also present a new approach to the problem in Section \ref{section5}, relying on the Cartan connection associated to a Lorentzian metric.  This approach yields a nice alternate proof of our results.

Our work is  motivated by Gromov's \emph{Open-Dense Orbit Theorem}~\cite{DG,Gro} (see also~\cite{Benoist2,Feres}). Gromov's result asserts that, if the pseudogroup of local automorphisms of  a \emph{rigid geometric structure}---such as a Lorentz metric or a connection---acts with a dense orbit, then this orbit is open.  In this case, the rigid geometric structure is locally homogeneous on an open dense set. Gromov's theorem says little about this maximal open and dense set of local homogeneity, which appears to by mysterious (see \cite[7.3.C]{DG}).  In many interesting geometric situations, it can be shown to be all of the connected manifold. This was proved, for instance,  for Anosov flows preserving a pseudo-Riemannian metric arising from differentiable stable and unstable foliations and a transverse contact  structure~\cite{BFL}. In~\cite{BF}, the authors deal with this question; their results indicate ways in which some rigid geometric structures cannot degenerate off the open dense set. 

The composition of this article is the following. In Section~\ref{section2} we use the geometry of Killing fields and geometric  invariant theory to prove that the Killing Lie algebra  of a  three-dimensional quasihomogeneous Lorentz metric $g$  is a three-dimensional, solvable, nonunimodular Lie algebra. We also show that $g$ is locally homogeneous away from a totally geodesic surface $S$, on which the isotropy is a one
parameter semisimple group or a one parameter unipotent group.  In the case of semisimple isotropy, Theorem~\ref{main} is proved in Section~\ref{section3}.  The proof of this case relies on the classification of normal forms of the metrics admitting quasihomogeneous isometric actions (see Proposition~\ref{prop.ss.classification} and Proposition~\ref{second}).
In the case of unipotent isotropy, Theorem~\ref{main} is proved in Section~\ref{section4}. Section~\ref{section5}  provides an  alternative proof of Theorem~\ref{main} using the formalism of Cartan connections.

Our result raises the following question:
\begin{question}
Let $g$ be a smooth Lorentz metric on a connected three-dimensional manifold $M$.  If $g$ is locally homogeneous on an open, dense subset of $M$, then must $g$ be locally homogeneous on all of $M$?
\end{question}

We are aware of noncompact quasihomogeneous examples of lower regularity $C^1$, recently discovered by C. Frances.  We would like to thank C. Frances for interesting conversations on the topic of this paper.  We thank the referee for her/his careful reading of our manuscript and many useful remarks.

\section{Killing Lie Algebra. Invariant Theory}   \label{section2}

Let  $g$ be a real analytic  Lorentz metric defined in a connected   open  neighborhood $U$ of the origin in $\RR^3$, which we assume is also simply connected.
In this section we recall the definition and several properties of the Killing algebra of $(U,g)$.  These were proved in \cite{Dumitrescu} without use of the compactness assumption.  For completeness, we briefly explain their derivation again here.

Classically,  (see, for instance~\cite{Gro,DG}) one  considers the $k$-jet of $g$ by taking at each point $u \in U$ the expression of $g$ up to order $k$ in exponential coordinates. In these coordinates, the $0$-jet of $g$ is
the standard flat Lorentz metric $dx^2 + dydz$. At each point $u \in U$, the space of exponential coordinates is acted on simply transitively by $O(2,1)$, the identity component of which is  isomorphic to $PSL(2, \RR)$.  
The space of all exponential coordinates in $U$ compatible with a fixed orientation and time orientation is a principal $PSL(2,\RR)$-bundle over $U$, which we will call the orthonormal frame bundle and denote by $R(U)$.

Geometrically, the $k$-jets of $g$ form an analytic $PSL(2,\RR)$-equivariant map $g^{(k)}: R(U) \to V^{(k)}$, where  $V^{(k)}$ is the  finite-dimensional vector space of $k$-jets at 0 of Lorentz metrics on $\RR^3$ with fixed 0-jet $dx^2+dydz$.
The group $O^0(2,1) \simeq PSL(2, \RR)$ acts linearly on this space, in which the origin corresponds to the $k$-jet of the flat metric.  
%For each $u \in U$, the group $PSL(2,\RR)$ acts on the $k$-jets at $u$ of the metric $g$, for all $k \in \NN$: choose a system of local exponential coordinates at $u$ and write the $k$-jet of $g$ in these coordinates.  Any linear automorphism of $(T_{u}U,g(u))$ gives another system of local exponential coordinates at $u$, in which the $k$-jet of $g$ is another point of $V^{(k)}$.  
One can find the details of this classical construction in~\cite{DG}.  
%A consequence of Gromov's Frobenius Theorem (\cite[Sec 3.4]{Gro}; see also \cite[Prop 3.10]{Melnick2}) is the existence, for each $x \in U$, of $k_0 \in \NN$ such that the stablizer in $\mathfrak{o}(2,1)$ of $g^{(k_0)}(b)$, for $b \in R(U)$ lying over $x$, corresponds to the Lie algebra of local Killing fields vanishing at $x$, in the frame given by $b$. 

Recall also that a local vector field is a {\it Killing field} for a Lorentz metric $g$ if its flow preserves $g$ wherever it is defined.  Note that local Killing fields preserve orientation and time orientation, so they act on $R(U)$.  The collection of all germs of local Killing fields at a point $u$ has the structure of a finite dimensional Lie algebra $\mathfrak{g}$ called the {\it local Killing algebra} of $g$ at $u$.  At a given point $u \in U$, the subalgebra $\mathfrak{i}$ of the local Killing algebra consisting of the local Killing fields $X$ with $X(u)=0$ is called the {\it isotropy} algebra at $u$.

The proof of Theorem~\ref{main} will use analyticity in an essential way.  We will make use of an extendability  result  for  local  Killing fields proved first by Nomizu in the real-analytic Riemannian setting~\cite{Nomizu} and generalized then for any $C^\omega$ rigid geometric structure by Amores and Gromov~\cite{Amores,Gro,DG}. This phenomenon states  that a local Killing field of $g$ can  be extended analytically along any curve $\gamma$  in $U$, and the resulting Killing field germ at the endpoint only depends on  the homotopy type of $\gamma$.  Because $U$ is assumed connected and simply connected, {\it local Killing fields extend to all of $U$.}  Therefore, the local Killing algebra at any $u \in U$ equals the algebra of globally defined Killing fields on $U$, which we will denote by $\lieg$.
 
\begin{definition} 
\label{def.lochom.transitive}
The  Lorentz metric  $g$ is \emph{locally homogeneous on an open subset $W \subset U$}, if for any $w \in W$ and any  tangent vector $V  \in T_{w}W$, there exists $X \in \lieg$ such that $X(w)=V.$  In this case, we will say that the Killing algebra $\mathfrak{g}$ is \emph{transitive on $W$}.
\end{definition}

Any two points in a  {\it connected}  open subset $W$ on which $g$ is locally homogeneous  can be related by flowing along a finite sequence of local Killing fields of $g$.

Notice  that Nomizu's extension phenomenon does not imply that the extension of a family of  pointwise linearly independent Killing fields stays linearly independent.  The assumption of Theorem 1 is that $\lieg$ is transitive on a nonempty open subset $W \subset U$.  Choose three elements $X,Y,Z \in \lieg$ that are linearly independent at a point $u_0 \in W$.  The function $\mbox{vol}_g(X(u),Y(u),Z(u))$ is analytic on $U$ and nonzero in a neighborhood of $u_0$.  The vanishing set of this function is a closed analytic proper  subset  $S'$ of $U$ containing  the points where $\lieg$ is not transitive.  Its complement  is an open dense set of $U$ on which $\lieg$ is transitive.  

From now on  we will assume that  {\it $g$ is a   quasihomogeneous Lorentz metric in the neighborhood $U$ of the origin in $\RR^3$}, with Killing algebra $\lieg$. Let $S$ be the complement of the maximal open subset of $U$ on which $\lieg$ acts transitively---that is, of a maximal locally homogeneous subset of $U$.  It is an intersection of closed, analytic  proper subsets, so \emph{$S$ is  a nontrivial closed and analytic subset of positive codimension passing through the origin}. The aim of this article is to prove that this is impossible.

%The  set of points $s$ in $U$ at which the Killing algebra  $\mathfrak{g}$ of $g$ does  not span the  tangent space $T_sU$  is a nontrivial locally closed  analytic subset $S$ in $U$ passing through 
%origin. In this case, $g$  is locally homogeneous on each connected component of $U \setminus S$, but not in the neighborhood of points in $S$.
 % Moreover, at  each point $s$ of $S$ one will get a {\it nontrivial isotropy algebra}  given by the kernel of the canonical evaluation morphism $ev : \mathfrak{g} \to T_sU$ (which is, by definition, of nonmaximal rank  at points of $S$).
 
We will next derive some basic properties of $\lieg$ that follow from quasihomogeneity.  
 
 \begin{lemma}[\cite{Dumitrescu} Lemme 3.2(i)]
\label{3}
The Killing algebra $\mathfrak{g}$ cannot be both  three-dimensional and unimodular.
 \end{lemma}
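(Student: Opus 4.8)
The plan is to argue by contradiction. Suppose $\lieg$ is three-dimensional and unimodular, and fix a basis $X,Y,Z$ of $\lieg$; by the Nomizu--Amores--Gromov extension phenomenon recalled above, these are Killing fields defined on all of the connected, simply connected $U$. The idea is to track the single analytic function
\[
 f := \mbox{vol}_g(X,Y,Z),
\]
which measures the failure of $X,Y,Z$ to be linearly independent. Since $\dim \lieg = \dim U = 3$, transitivity of $\lieg$ on the open dense set $U\setminus S$ is equivalent to $X(w),Y(w),Z(w)$ spanning $T_wU$ at every $w\in U\setminus S$, so $f$ is nowhere zero there. On the other hand, at a point $s\in S$ the evaluation map $\lieg\to T_sU$ fails to be onto and hence (both spaces being three-dimensional) $X(s),Y(s),Z(s)$ are dependent, so $f(s)=0$. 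Thus $f$ is a nonconstant analytic function whose zero set is exactly $S$.

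The key step is to differentiate $f$ along an arbitrary $W\in\lieg$. Because $W$ is a Killing field it preserves the metric volume form, $\mathcal{L}_W\,\mbox{vol}_g=0$, so the Leibniz rule applied to $\mbox{vol}_g(X,Y,Z)$ leaves only the three bracket terms:
\[
 \mathcal{L}_W f = \mbox{vol}_g([W,X],Y,Z)+\mbox{vol}_g(X,[W,Y],Z)+\mbox{vol}_g(X,Y,[W,Z]).
\]
Expanding each bracket in the basis $X,Y,Z$ and discarding every term with a repeated entry (these vanish since $\mbox{vol}_g$ is alternating), each summand contributes only its diagonal coefficient, and the right-hand side collapses to $(\mbox{tr}(\ad W))\,f$. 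Hence
\[
 \mathcal{L}_W f = \big(\mbox{tr}(\ad W)\big)\,f \qquad \text{for all } W\in\lieg,
\]
so $f$ is an eigenfunction of the whole algebra with eigenvalue the modular character $W\mapsto \mbox{tr}(\ad W)$.

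The unimodularity hypothesis says precisely that this character vanishes identically, so $\mathcal{L}_W f=0$ for every $W\in\lieg$. Evaluated at a point $w\in U\setminus S$, the vectors $W(w)$ exhaust $T_wU$ by transitivity, whence $df_w=0$; thus $f$ is locally constant on $U\setminus S$. Since $f$ is real analytic and $U$ is connected, the identity principle forces $f$ to be that same constant on all of $U$. But this constant is the nonzero value of $f$ taken on $U\setminus S$, contradicting $f|_S\equiv 0$ with $S\neq\emptyset$. This contradiction shows $\lieg$ cannot be simultaneously three-dimensional and unimodular.

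The computation itself is elementary; the care is needed for the two inputs from the general theory. First, one must know that the chosen basis really consists of \emph{globally} defined fields, which is exactly the extension phenomenon---without it $f$ need not even be defined on all of $U$. Second, the passage from ``$df=0$ on the open dense set'' to ``$f$ constant on $U$'' uses analyticity in an essential way, through the identity principle on the connected set $U$; this is where the real-analytic hypothesis enters and is the only genuinely nontrivial ingredient. I expect no serious obstacle beyond bookkeeping: the heart of the matter is recognizing that the volume of a frame built from a basis of Killing fields is an eigenfunction of $\lieg$ whose eigenvalue is the modular character, so that unimodularity is incompatible with the vanishing of that volume on $S$.
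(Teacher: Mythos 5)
Your proposal is correct and follows essentially the same route as the paper: both track the analytic function $\mbox{vol}_g(K_1,K_2,K_3)$, use unimodularity together with invariance of the metric volume form to make it constant and nonzero on the locus of transitivity, and contradict its vanishing on $S$. You merely spell out the Lie-derivative computation and the density/analyticity step that the paper leaves implicit.
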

 
 \begin{proof} Let $(K_1$, $K_2$, $K_3)$ be a basis of the Killing algebra. Again consider the analytic function $v=vol_g(K_1, K_2, K_3)$. Since $\mathfrak{g}$ is unimodular and preserves the volume form of $g$, the function $v$ is nonzero and  constant on each open set where $\mathfrak{g}$ is transitive. On the other hand, $v$ vanishes  on $S$: a contradiction.
 \end{proof}

\begin{lemma}[\cite{Dumitrescu}  Lemme 2.1, Proposition 3.1, Lemme 3.2(i)]
 \label{dim}  

\ \ \

\begin{enumerate}[(i)]
\item The dimension of the isotropy  at a point $u \in U$ differs from two.

\item  The Killing algebra $\mathfrak{g}$  is of dimension three.
                             
\item The Killing algebra  $\mathfrak{g}$ is solvable.
\end{enumerate}
\end{lemma}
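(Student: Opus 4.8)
The plan is to analyze the linear isotropy representation and to play the possible dimensions of $\mathfrak{i}_u$ against the existence of the singular set $S$. I would first record two standard facts. Since a real-analytic Killing field is determined by its $1$-jet at a point, a field $X$ with $X(u)=0$ is determined by $\nabla X|_u$, which is skew-symmetric for $g_u$; thus the linear isotropy representation embeds $\mathfrak{i}_u \hookrightarrow \mathfrak{so}(2,1) \cong \mathfrak{sl}(2,\RR)$ faithfully, and in particular $\dim \mathfrak{i}_u \le 3$. Second, the orbit–stabilizer relation gives $\dim \mathfrak{g} = \dim(\mathfrak{g}\cdot u) + \dim \mathfrak{i}_u$, where $\dim(\mathfrak{g}\cdot u)\le 3$, with equality exactly on the transitive locus. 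The key additional input is the $PSL(2,\RR)$-equivariant jet map $g^{(k)}\colon R(U)\to V^{(k)}$ of Section~\ref{section2}.

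The crucial step is to show $\dim \mathfrak{i}_u \le 1$ for every $u$, which in particular gives part (i). Suppose $\dim \mathfrak{i}_u \ge 2$. Its image in $\mathfrak{sl}(2,\RR)$ is a subalgebra of dimension at least $2$; since every $2$-dimensional subalgebra of $\mathfrak{sl}(2,\RR)$ is conjugate to the Borel $\mathfrak{b}=\langle h,e\rangle$, this image contains (a conjugate of) a Borel. Because the flow of an isotropy field fixes $u$ and preserves $g$, the induced linear maps fix the jet $\xi_0=g^{(k)}(\hat u)\in V^{(k)}$ for every $k$; hence $\xi_0$ is fixed by a Borel subgroup $B$ of $PSL(2,\RR)$. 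Here I would invoke the representation theory of $SL(2,\RR)$: in any finite-dimensional representation the $B$-invariants coincide with the full invariants, since an $N$-invariant vector lies in the sum of highest-weight lines and invariance under the split torus $A$ forces weight $0$, i.e. the trivial summands. Therefore $\xi_0$ is fixed by all of $PSL(2,\RR)$, for every $k$. But an $O(2,1)$-invariant jet is exactly the jet of an isotropic, hence constant-curvature, metric, so $g$ has a constant-curvature germ at $u$; by real-analyticity this propagates to all of $U$, making $g$ locally homogeneous everywhere and contradicting $S\neq\emptyset$. This proves $\dim\mathfrak{i}_u\le 1$, and thus part (i).

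Part (ii) then follows by a dimension count. On the transitive locus the orbit is open, so $\dim\mathfrak{g}\ge 3$. On the other hand, choosing a point $u_1\in S$, the orbit through $u_1$ is not open, so $\dim(\mathfrak{g}\cdot u_1)\le 2$, whence $\dim\mathfrak{g}=\dim(\mathfrak{g}\cdot u_1)+\dim\mathfrak{i}_{u_1}\le 2+1=3$ using the bound just established. Thus $\dim\mathfrak{g}=3$, with trivial isotropy on the transitive locus and one-dimensional isotropy along $S$.

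For part (iii), recall that a three-dimensional real Lie algebra is either solvable or isomorphic to one of the simple algebras $\mathfrak{sl}(2,\RR)$ or $\mathfrak{so}(3)$. Both simple algebras are unimodular (for a semisimple algebra every element is a sum of brackets, so $\mathrm{tr}\,\mathrm{ad}\,X=0$), hence both are excluded by Lemma~\ref{3}. Therefore $\mathfrak{g}$ is solvable. The main obstacle in this program is the heart of paragraph two: the representation-theoretic identification of the $B$-fixed jets with the $G$-fixed ones, and then the upgrade of the resulting pointwise (jetwise) constant-curvature condition to an open—hence, by analyticity, global—statement. Everything else is bookkeeping with the orbit–stabilizer formula and the classification of low-dimensional Lie algebras.
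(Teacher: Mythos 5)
Your reduction of the two-dimensional case to the statement that the $k$-jet $\xi_0$ is fixed by all of $PSL(2,\RR)$ is correct, and it is in substance the paper's own key step (the paper phrases it as: stabilizers in finite-dimensional algebraic $PSL(2,\RR)$-actions are never two-dimensional; your ``Borel-invariants equal $G$-invariants'' argument proves the same fact). The gap is in what you deduce from it. From ``$\xi_0$ is $O(2,1)$-fixed for every $k$'' you conclude that $g$ has a constant-curvature germ at $u$. That inference is false: $O(2,1)$-invariance of all jets at the single point $u$ says only that the germ of $g$ is radially symmetric in exponential coordinates at $u$, i.e.\ isotropic \emph{at $u$}, and such germs---the pseudo-Riemannian analogues of $dr^2+f(r)^2\,d\Omega^2$ with $f(r)=r+O(r^3)$---are generically not of constant curvature. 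Schur-type rigidity requires isotropy at every point of an open set, not at one point, and analyticity does not ``propagate'' a condition that holds only at a point. The correct conclusion, and the one the paper draws, is that every element of $\mathfrak{sl}(2,\RR)$, realized as a linear vector field in exponential coordinates at $u$, preserves all jets of the analytic metric $g$ and hence preserves $g$ itself, so $\mathfrak{i}_u$ contains a copy of $\mathfrak{sl}(2,\RR)$. This contradicts $\dim\mathfrak{i}_u=2$ and proves (i), but it does \emph{not} exclude $\dim\mathfrak{i}_u=3$, so your stronger claim $\dim\mathfrak{i}_u\le 1$ is unproven at this stage.

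This matters because your part (ii) leans on that stronger claim: knowing only $\dim\mathfrak{i}_{u_1}\in\{0,1,3\}$ at $u_1\in S$, your count gives $\dim\mathfrak g\le 2+3=5$, not $3$. One must then rule out dimensions $4$, $5$, and $6$ separately, which is exactly what the paper does: dimension $6$ means constant curvature, hence local homogeneity; dimension $5$ forces two-dimensional isotropy on the open orbits, contradicting (i); dimension $4$ forces a three-dimensional isotropy isomorphic to $\mathfrak{sl}(2,\RR)$ at $s\in S$, whose standard action on $T_sU$ would have to preserve the one-dimensional image of the evaluation map $\mathfrak g\to T_sU$---impossible, since the three-dimensional irreducible representation of $\mathfrak{sl}(2,\RR)$ has no invariant lines. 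Your part (iii) is correct and identical to the paper's argument.
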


\begin{proof} (i) Assume for a contradiction that the isotropy algebra $\mathfrak{i}$ at a point $u \in U$ has dimension two. Elements of $\mathfrak{i}$ act linearly in exponential coordinates at $u$. Since elements of $\mathfrak{i}$  preserve $g$, they preserve, in particular,  the $k$-jet of $g$ at $u$, for all $k \in \NN$.
 This gives an embedding of $\mathfrak{i}$ in the Lie algebra of $PSL(2,\RR)$ such that the corresponding two-dimensional connected subgroup of $PSL(2,\RR)$ preserves the $k$-jet of $g$ at $u$, for all  $k \in \NN$.  But {\it stabilizers in a  finite-dimensional  linear  algebraic $PSL(2,\RR)$-action never have dimension two}.  Indeed, it suffices to check this statement for irreducible
 linear representations of $PSL(2, \RR)$, for which it is well-known that the stabilizer in $PSL(2,\RR)$  of a nonzero element is zero- or one-dimensional~\cite{Kir}.
 
 It follows that the stabilizer in $PSL(2,\RR)$ of the $k$-jet of $g$ at $u$ is of dimension three and hence equals $PSL(2, \RR)$.  Consequently, in exponential coordinates at $u$, each element of $\mathfrak{sl}(2,\RR)$ gives rise to a local linear vector field which preserves $g$, because it preserves all $k$-jets of the analytic metric $g$ at $u$.
%On the Lie algebra level, each element of $\mathfrak{sl}(2,\RR)$ preserves all $k$-jets of the analytic metric $g$ at $u$, and so, by the Frobenius Theorem, gives rise in exponential coordinates to a local Killing field of $g$.  
The isotropy algebra $\mathfrak{i}$ thus contains a copy of $\mathfrak{sl}(2, \RR)$: a contradiction, since $\mathfrak{i}$ was assumed of dimension two.
 
 (ii) Since $g$ is quasihomogeneous, the Killing algebra is of dimension at least $3$.  For a three-dimensional Lorentz metric, the maximal dimension of the Killing algebra is $6$. This characterizes Lorentz metrics of constant sectional curvature. Indeed, in this case,  the isotropy is, at each point,  of dimension three (see, for instance, ~\cite{Wolf}). These Lorentz metrics are locally homogeneous.
 
 Suppose that the Killing algebra of $g$  is of dimension $5$. Then, on any open set of local homogeneity the isotropy is two-dimensional. This is in contradiction with point (i).
 
 Last, suppose that the Killing algebra of $g$  is of dimension $4$. Then, at a point $s \in S$,  the isotropy has dimension $\geq 2$. Hence, point (i) implies that the isotropy  at  $s$  has dimension three and thus is isomorphic to $\mathfrak{sl}(2, \RR)$. Moreover, the  standard linear action of the isotropy on $T_sU$ preserves the image of the evaluation morphism $ev(s): \mathfrak{g} \to T_sU$, which is a line. But the standard $3$-dimensional  $PSL(2, \RR)$-representation does not admit invariant lines: a contradiction. 

Therefore, the Killing algebra is three-dimensional.
 
 (iii) A Lie algebra of dimension three is semisimple or solvable~\cite{Kir}. Since semisimple Lie algebras are unimodular, Lemma~\ref{3} implies that  $\mathfrak{g}$ is solvable.
 \end{proof}

Let us recall Singer's result~\cite{Singer,DG,Gro} which asserts that {\it $g$ is locally homogeneous if and only if the image of $g^{(k)}$ is exactly one $PSL(2, \RR)$-orbit in $V^{(k)}$, for a certain $k$ (big enough).}   This theorem is the key ingredient in the proof of the following fact.  

\begin{proposition}[\cite{Dumitrescu} Lemme 2.2]
 \label{invariant theory}  
If $g$ is quasihomogeneous, then the Killing algebra $\mathfrak{g}$ does not preserve any nontrivial vector field of constant norm $\leq 0$.
\end{proposition}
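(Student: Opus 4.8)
The plan is to prove the contrapositive: I will show that if $\lieg$ preserves a nontrivial vector field $V$ with $g(V,V)\equiv c$ constant and $c\le 0$, then $g$ is locally homogeneous on all of $U$, contradicting quasihomogeneity. By the criterion of Singer recalled above, it suffices to prove that the jet map $g^{(k)}\colon R(U)\to V^{(k)}$ has image a single $PSL(2,\RR)$-orbit for $k$ large. The idea is to rigidify $g^{(k)}$ using the extra invariant $V$: I encode $V$ as the $PSL(2,\RR)$-equivariant map $\phi\colon R(U)\to\RR^3$ sending a frame $r$ to the coordinates of $V$ in $r$, so that $\phi$ takes values in the level set $\{q=c\}$ of the standard Lorentz form $q$, and I assemble the combined equivariant, Killing-invariant map $\Psi=(g^{(k)},\phi)\colon R(U)\to V^{(k)}\times\RR^3$.

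First I would analyze $\Psi$ over the locally homogeneous open set $\Omega=U\setminus S$. There the isotropy is trivial, so the stabilizer of $g^{(k)}(r)$ in $\mathfrak{sl}(2,\RR)$ is zero; since the $PSL(2,\RR)$-action on fibers together with the (Killing) action of $\lieg$ is transitive on $R(\Omega)$ and preserves $\Psi$ equivariantly, the image $\Psi(R(\Omega))$ is a single free orbit $\hat O_0\cong PSL(2,\RR)$. Its projection to $\RR^3$ is the orbit of a norm-$c$ vector $v_0\neq 0$, nonzero because a $\lieg$-invariant field vanishing somewhere on the orbit $\Omega$ would vanish on all of $\Omega$, hence identically by analyticity. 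Because $\Omega$ is dense and $\Psi$ is continuous, $\Psi(R(U))\subseteq\overline{\hat O_0}$, so everything reduces to understanding the boundary orbits of $\overline{\hat O_0}$ lying over $S$. By Lemma~\ref{dim}, the isotropy at a point of $S$, being nonzero of dimension neither two nor three, is one-dimensional; hence over $S$ the map $\Psi$ lands in orbits of dimension two whose one-dimensional stabilizer fixes a norm-$c$ vector.

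The clean case is $c<0$. Here $v_0$ is timelike, its orbit under $PSL(2,\RR)\cong O^0(2,1)$ is a closed sheet of a two-sheeted hyperboloid, and its stabilizer is the \emph{compact} group $SO(2)$. I would exploit this compactness as follows: if $g_n\cdot(w_0,v_0)\to(w_1,v_1)$ is any boundary point, then $g_n v_0\to v_1$ inside a closed orbit with compact stabilizer, so $g_n$ converges modulo $SO(2)$; multiplying by a convergent sequence from the compact $SO(2)$, a subsequence of $g_n$ itself converges in $PSL(2,\RR)$, whence $(w_1,v_1)\in\hat O_0$. Thus $\hat O_0$ is closed, $\Psi(R(U))=\hat O_0$, the image of $g^{(k)}$ is the single orbit given by the first projection, and Singer's theorem yields local homogeneity. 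In brief, a timelike invariant field produces a compact stabilizer that forbids any degeneration over $S$.

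The hard part is $c=0$, which I expect to be the main obstacle. Now $v_0$ is null, its orbit is a nappe of the light cone, which is \emph{not} closed---its closure adds the origin---and the stabilizer is a noncompact unipotent group, so the properness argument above breaks down and $\overline{\hat O_0}$ may a priori contain boundary orbits over $S$. To rule these out I would bring in further $\lieg$-invariant data built from $V$: the dual $1$-form $g(V,\cdot)$, the Levi-Civita derivative $\nabla V$ (invariant because Killing fields preserve $\nabla$), and the scalar invariants extracted from them, all of which are Killing-invariant, hence constant on $U$. Combined with the fact that $\lieg$ is solvable and \emph{nonunimodular} (Lemmas~\ref{3} and~\ref{dim}), so that the relative invariant $v=\mathrm{vol}_g(K_1,K_2,K_3)$ transforms by the nontrivial modular character and vanishes on $S$, the aim is to manufacture a genuine Killing-invariant function that is forced to vanish on $S$ yet is nonzero on $\Omega$, exactly in the spirit of the proof of Lemma~\ref{3}. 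Equivalently, one must show that a globally defined $\lieg$-invariant null field is too rigid to admit a one-dimensional unipotent isotropy in its closure; establishing this incompatibility---controlling how the invariant null line and the derived invariants behave analytically as one approaches $S$---is the crux of the argument.
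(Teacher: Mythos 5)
Your reduction to Singer's theorem and your handling of the timelike case $c<0$ are sound and essentially coincide with the paper's argument: the stabilizer of a timelike vector is compact, so orbits are closed, no degeneration can occur over $S$, and the image of the jet map is a single orbit. But the null case $c=0$ is exactly where the content of the proposition lies, and you leave it as an acknowledged open ``crux''; as written the proof is therefore incomplete. Worse, the strategy you sketch for closing it cannot work: a genuinely $\lieg$-invariant continuous function on $U$ is constant on the dense locally homogeneous set $\Omega=U\setminus S$ and hence, by continuity, takes the same value on $S$, so no Killing-invariant scalar can ``vanish on $S$ yet be nonzero on $\Omega$.'' (The function $\mathrm{vol}_g(K_1,K_2,K_3)$ escapes this only because it is a relative invariant, transforming by the modular character, not an invariant.)

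The missing ingredient is not a further invariant but the theorem of Kostant and Rosenlicht: \emph{in a finite-dimensional algebraic representation of a unipotent group, every orbit is closed}. Concretely, do not track the full $PSL(2,\RR)$-orbit $\hat O_0$ of the pair $\bigl(g^{(k)}(r),\phi(r)\bigr)$ --- its closure can indeed acquire smaller orbits, precisely because the light cone is not closed --- but instead restrict to the slice $\phi^{-1}(v_0)$, i.e.\ to the reduction $R'(U)\subset R(U)$ consisting of frames whose first vector is $V$. This is a principal bundle for the unipotent stabilizer $H<PSL(2,\RR)$ of the null vector $v_0$, and over an open set $W$ of local homogeneity the image $g^{(k)}(R'(W))$ is a single $H$-orbit $\mathcal O\subset V^{(k)}$. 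For $s\in S$ in the closure of $W$, the image of the fiber $R'(W)_s$ lies in $\overline{\mathcal O}=\mathcal O$ by Kostant--Rosenlicht; combined with the transitivity of $\lieg$ on $S$ (Lemma~\ref{unimodular}(i)), this forces $g^{(k)}(R'(U))=\mathcal O$, and Singer's theorem then yields local homogeneity, contradicting quasihomogeneity. With this one algebraic fact your null case closes along the same lines as your compactness argument for the timelike case.
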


\begin{proof} Let $k \in \NN$ be given by Singer's Theorem.  First suppose, for a contradiction, that  there exists an isotropic vector field $X$ in $U$ preserved by $\mathfrak{g}$. Then the $\mathfrak{g}$-action  on $R(U)$, lifted from the action on $U$, preserves the subbundle $R'(U)$, where $R'(U)$ is a reduction 
of the structural group $PSL(2, \RR) \cong O^o(2,1)$ to the stabilizer of an isotropic vector in the standard linear representation on $\RR^3$:
$$H=
\left\{ 
\left(\begin{array}{cc} 1 & T \\ 0 & 1\\ \end{array}\right) \in PSL(2,\RR)
\ : \  T \in \RR
\right\}.
$$

Restricting to exponential coordinates with respect to frames the first vector of which is   $X$ gives an $H$-equivariant map $g^{(k)}:R'(U) \to V^{(k)}$.
On each open set $W$ on which  $g$ is locally homogeneous, the image  $g^{(k)}(R'(W))$ is exactly one $H$-orbit $\mathcal O \subset V^{(k)}$. Let $s \in S$ be a point in the closure of $W$. Then the image
under $g^{(k)}$ of the fiber $R'(W)_s$ lies in the closure of $\mathcal O$.  But $H$ is unipotent, and a classical result due to Kostant and Rosenlicht~\cite{Rosenlicht} asserts that 
{\it for algebraic representations of unipotent groups, the orbits are closed.} This implies that the image $g^{(k)}(R'(W)_s)$ is also $\mathcal O$. Moreover, this holds for all
$s \in S$. Indeed, the restriction of $\mathfrak{g}$ to  $S$ being  transitive  (as will be proved independently in point (i) of Lemma~\ref{unimodular}), this holds for all $s \in S$.

Any open set of local homogeneity in $U$ admits points of $S$ in its closure. It follows that the image of $R'(U)$ under $g^{(k)}$ is exactly the orbit $\mathcal O$.  Singer's  theorem
implies that $g$ is locally homogeneous, a contradiction to quasihomogeneity.

If there exists  a $\mathfrak{g}$-invariant    vector field $X$ in $U$ of constant strictly negative $g$-norm, then the  $\mathfrak{g}$-action on $R(U)$ preserves a subbundle $R'(U)$ with structural group $H'$, where
$H'$  is the stabilizer  of a strictly negative vector  in the standard linear representation of 
$PSL(2, \RR)$ on $\RR^3$.  In this case, $H'$  is a compact one parameter group in $PSL(2, \RR)$. The previous argument again yields a contradiction, after replacing the Kostant-Rosenlicht Theorem by the obvious  fact that  orbits of smooth compact group actions are closed.
\end{proof}

\begin{lemma}[compare \cite{Dumitrescu}, Proposition 3.3] 
\label{unimodular}   
After possibly shrinking $U$, we have

\begin{enumerate}[(i)]
%\ \ \
\item $S$ is a connected, real analytic submanifold of codimension one, on which $\lieg$ acts transitively.

\item  The isotropy at a point of $S$ is unipotent or $\RR$-semisimple.

\item The restriction of $g$ to $S$ is degenerate.
\end{enumerate}
\end{lemma}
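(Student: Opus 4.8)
The plan is to extract, from quasihomogeneity together with nonunimodularity, a single distinguished analytic function whose transformation law under $\lieg$ governs $S$ in all three respects at once. Fix a basis $K_1,K_2,K_3$ of $\lieg$ and set $v=\mathrm{vol}_g(K_1,K_2,K_3)$, so that $S=\{v=0\}$. Since isometries preserve the volume form, a Leibniz computation gives the fundamental identity $X\cdot v=\chi(X)\,v$ for every $X\in\lieg$, where $\chi(X)=\mathrm{tr}(\mathrm{ad}\,X)$ is the modular character; by Lemma~\ref{3} and Lemma~\ref{dim} the algebra is nonunimodular, so $\chi\not\equiv 0$. In particular $S$ is $\lieg$-invariant, and for $s\in S$ the differential $dv(s)$ annihilates the orbit tangent space $T_s(\lieg\cdot s)$. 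First I would pin down the isotropy dimension: at $s\in S$ the action is nontransitive, so $\mathrm{rank}\,ev(s)\le 2$ and the isotropy $\mathfrak{i}_s$ is nonzero; it is not two-dimensional by Lemma~\ref{dim}(i), and it is not three-dimensional, for then the faithful linear isotropy representation (a Killing field with vanishing $1$-jet at $s$ is trivial) would embed the solvable $\lieg$ onto all of $\mathfrak{sl}(2,\RR)$, contradicting Lemma~\ref{dim}(iii). Hence $\dim\mathfrak{i}_s=1$ for every $s\in S$, so every $\lieg$-orbit contained in $S$ is two-dimensional.

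Part (i) is where the real work lies, and I expect it to be the main obstacle. By the real-analytic orbit theorem the $\lieg$-orbits are immersed submanifolds, and $S=\{v=0\}$ is an analytic set of dimension exactly two: it is cut out by a nonzero analytic function, so $\dim S\le 2$, and it contains two-dimensional orbits, so $\dim S\ge 2$. Since all orbits meeting $S$ have the same dimension $2$ and distinct orbits are disjoint, the frontier property of analytic orbits shows that each such orbit is open and closed in $S$, because the frontier of a two-dimensional orbit consists of orbits of dimension $\le 1$, of which there are none in $S$. After shrinking $U$ to a small neighborhood of a chosen point of $S$, the set $S$ is therefore connected and is a single two-dimensional orbit, i.e.\ a connected analytic hypersurface on which $\lieg$ acts transitively. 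The delicate point is precisely this passage from ``union of two-dimensional orbits'' to ``one embedded hypersurface,'' which rests on disjointness of orbits together with the analyticity-driven frontier estimate; note that this argument uses only Lemma~\ref{dim} and is independent of Proposition~\ref{invariant theory}, as required.

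For (iii) I would use a second avatar of the identity above. The dual norm $N:=g^{\ast}(dv,dv)$ satisfies $X\cdot N=2\chi(X)\,N$, because $\mathcal L_X g^{\ast}=0$ and $\mathcal L_X dv=d(\chi(X)v)=\chi(X)\,dv$. Hence the ratio $N/v^{2}$ is $\lieg$-invariant on the transitive open set, so it is constant there, equal to some $c$, and by analyticity $N=c\,v^{2}$ on all of $U$. Writing $v=\rho^{m}w$ near $S$, with $\rho$ a defining function of the hypersurface from part (i), $w|_S\neq 0$, and $m\ge 1$ the vanishing order (constant along $S$ by transitivity), the lowest-order term in $\rho$ of $N=g^{\ast}(dv,dv)$ is $m^{2}\rho^{2m-2}w^{2}\,g^{\ast}(d\rho,d\rho)$, while $c\,v^{2}=O(\rho^{2m})$; comparing the order $2m-2$ terms forces $g^{\ast}(d\rho,d\rho)|_S=0$. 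Thus the conormal of $S$ is $g$-null, which is exactly the assertion that $g|_S$ is degenerate.

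Finally (ii) drops out from (iii). The radical of the degenerate form $g|_{T_sS}$ is a one-dimensional null line, canonically determined by $g$ and $T_sS$, hence preserved by the linear isotropy $\bar A\in\mathfrak{so}(2,1)$; so $\bar A$ admits a real null eigenvector. An elliptic element of $\mathfrak{so}(2,1)$ has no real null eigenvector, since its unique real eigendirection is the fixed timelike axis. Therefore $\bar A$ is $\RR$-split or nilpotent, that is, the isotropy at a point of $S$ is $\RR$-semisimple or unipotent.
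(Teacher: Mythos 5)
Your proof reaches all three conclusions and is essentially correct, but by a genuinely different route for parts (ii) and (iii). The paper proves (ii) first by elementary linear algebra: the isotropy fixes some vector $V$, the curve $\exp_s(tV)$ must lie in $S$ because off-$S$ the isotropy is trivial, and an elliptic element with three invariant lines is trivial; it then reads off (iii) from the eigenvector configuration of the isotropy in each of the two surviving cases. You instead exploit the transformation law $X\cdot v=\chi(X)\,v$ under the modular character, deduce $g^{*}(dv,dv)=c\,v^{2}$ by invariance plus analytic continuation, and compare vanishing orders along $S$ to conclude that the conormal of $S$ is null; (ii) then falls out because the radical of $g|_{T_sS}$ is an isotropy-invariant null line and elliptic elements of $\mathfrak{o}(2,1)$ preserve no null line. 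This is more conceptual than the paper's case analysis, it reverses the logical order of (ii) and (iii), and it correctly avoids any circularity with Proposition~\ref{invariant theory}. The treatment of the isotropy dimension in (i) coincides with the paper's.

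The one step I would push back on is in (i): you invoke ``the frontier property of analytic orbits'' to assert that the frontier of a two-dimensional orbit consists of orbits of dimension $\le 1$. That is a theorem for algebraic group actions, not for analytic actions of Lie algebras (an irrational flow on the torus already violates it), so as stated this step is unjustified; moreover closedness of every orbit in $S$ would not by itself yield openness. The repair stays within your framework: the singular locus of the $2$-dimensional analytic set $S$ is a $\lieg$-invariant analytic subset of dimension $\le 1$, hence contains no orbit and is empty; so $S$ is a smooth surface, each ($2$-dimensional) orbit is open in it, and distinct orbits being disjoint and open, each is also closed --- now shrink $U$ so that $S$ is connected. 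This is essentially the content of the paper's one-line assertion that rank-two evaluation plus invariance makes $S$ a codimension-one submanifold. One last small point: in (iii) the factorization $v=\rho^{m}w$ with $w|_S\neq 0$ needs the observation that $\{w|_S=0\}$ is $\lieg$-invariant (since $X\cdot w=(\chi(X)-m\sigma)w$ where $X\cdot\rho=\sigma\rho$), hence empty by transitivity and maximality of $m$; with that supplied, the order count and the conclusion $g^{*}(d\rho,d\rho)|_S=0$ are correct.
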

  
\begin{proof}
(i) The fact that $S$ is a real analytic set was already established above: it coincides with the vanishing of the  analytic function $v=vol_g(K_1, K_2, K_3)$, where  $(K_1$, $K_2$, $K_3)$ is  a basis of the Killing algebra.  If needed, one can  shrink the open set $U$ in order that $S$ be connected.  By  point (i) in Lemma~\ref{dim}, the isotropy algebra at points in  $S$ has dimension one or three. We prove that this dimension must  be equal to one.

Assume, for a contradiction, that there exists $s \in S$ such that the isotropy at $s$ has dimension three. Then, the isotropy algebra at $s$ is isomorphic to $\mathfrak{sl}(2, \RR)$.
On the other hand, since both are $3$-dimensional,  the isotropy algebra at $s$ is isomorphic to $\mathfrak{g}$. Hence, $\mathfrak{g}$ is semisimple, which contradicts Lemma~\ref{dim} (iii).

It follows that the isotropy algebra at each point $s \in S$ is of dimension one.  Equivalently, the evaluation morphism $ev(s): \mathfrak{g} \to T_sU$ has rank two. Since the $\mathfrak{g}$-action preserves $S$, this implies that $S$ is a smooth submanifold of codimension one in $U$ and $T_sS$ coincides with the image of $ev(s)$.  The restriction of $\mathfrak{g}$ to $S$ satisfies Definition \ref{def.lochom.transitive}, so is transitive. 

(ii) Let $\mathfrak{i}$ be the isotropy Lie algebra at $s \in S$.  It corresponds to a 1-parameter subgroup of $PSL(2,\RR)$, which is elliptic, $\RR$-semisimple, or unipotent.  In any case, there is a tangent vector $V \in T_s U$ annihilated by $\mathfrak{i}$.  Then $\mathfrak{i}$ also vanishes along the curve $\exp_s(tV)$, where defined.  Because points of $U \backslash S$ have trivial isotropy, this curve must be contained in $S$.  Thus the fixed vector $V$ of the flow of $\mathfrak{i}$ is tangent to $S$.

If $\mathfrak{i}$ is elliptic, it preserves a tangent direction at $s$ transverse to the invariant subspace $T_sS \subset T_sU$.  Within $T_sS$, there must also be an invariant line independent from $V$.  But now an elliptic flow with three invariant lines must be trivial.  We conclude that $\mathfrak{i}$ is semisimple or unipotent.

(iii)  If the isotropy is unipotent, the vector $V$ annihilated by $\mathfrak{i}$ must be isotropic, and the invariant subspace $T_sS$ must equal $V^\perp$.  So $S$ is degenerate in this case.

If $\mathfrak{i}$ is semisimple over $\RR$, then $V$ is spacelike.  The other two eigenvectors of $\mathfrak{i}$ have nontrivial eigenvalues and must be isotropic.  On the other hand, $\mathfrak{i}$ preserves the plane $T_sS$, so it preserves a line of $T_sU$ transverse to $S$ and a line independent from $V$ in $T_sS$.  These lines must be the eigenspaces of $\mathfrak{i}$.  If the plane $T_sS \subset T_sU$ contains an isotropic line and is transverse to an isotropic line, then it is degenerate.  
\end{proof}

  According to Lemma~\ref{unimodular} we have two different geometric  situations, which will be treated separately in Sections~\ref{section3} and~\ref{section4}.  The case of $\RR$-semisimple isotropy will be referred to as just ``semisimple'' below.

  \section{No quasihomogeneous Lorentz metrics with semisimple isotropy}   \label{section3}
  
If the isotropy at $s \in S$ is semisimple, then it fixes a vector $V \in T_sS$ of positive $g$-norm.  Using the transitive $\mathfrak{g}$-action on $S$, we can extend $V$ to a $\mathfrak{g}$-invariant  vector field $X$ on $S$ with constant positive $g$-norm.  In this section {\it we assume that  the isotropy is semisimple}.  We can suppose  that $X$ is of constant norm equal to $1$.

  Recall  that the affine group of the real  line  $\operatorname{Aff}$  is the group of transformations of $\RR$ given by $x \mapsto ax+b$, with $a \in \RR^*$ and $b \in \RR$. If $Y$ is the infinitesimal generator of the one-parameter group of  homotheties and $H$ the infinitesimal generator of the one parameter group of  translations,  then $\lbrack Y, H \rbrack =H$.

\begin{lemma}[compare \cite{Dumitrescu}, Proposition 3.6]  
\label{Killing}  

\begin{enumerate}[(i)]

\item The Killing algebra  $\mathfrak{g}$ is isomorphic to $\RR \oplus \mathfrak{aff}$. The stabilizer of a point of $S$ corresponds to a one-parameter group of homotheties in 
$\operatorname{Aff}$. 

\item The vector field $X$ is the restriction to $S$ of a central element $X'$ in $\mathfrak{g}$.

\item The restriction of the Killing algebra to $S$ has, in adapted analytic coordinates $(x,h)$,  a basis $(-h \frac{\partial}{\partial h}, \frac{\partial}{\partial h},\frac{\partial}{\partial x})$.

\item  In the above coordinates, the restriction of $g$ to $S$ is $dx^2$.
\end{enumerate}
\end{lemma}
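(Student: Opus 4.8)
The plan is to pin down the abstract isomorphism type of $\mathfrak{g}$ first, and then read off the normal forms on $S$. Write $\mathfrak{i}=\langle Y\rangle$ for the one-dimensional, $\RR$-semisimple isotropy at $s\in S$. By Lemma~\ref{unimodular}(iii) and the embedding $\mathfrak{i}\hookrightarrow\mathfrak{sl}(2,\RR)$ coming from the linear isotropy action, $Y$ acts on $T_sU$ diagonalizably with eigenvalues $\{\alpha,0,-\alpha\}$, $\alpha\neq0$: the fixed spacelike vector $V$ (eigenvalue $0$) and two isotropic eigenvectors. The evaluation morphism $ev(s)\colon\mathfrak{g}\to T_sU$, with kernel $\mathfrak{i}$ and image the degenerate plane $T_sS=\langle V, E\rangle$ (with $E$ the isotropic direction tangent to $S$), intertwines $\operatorname{ad}_Y$ with the negative of the linear isotropy action; hence on $\mathfrak{g}/\mathfrak{i}\cong T_sS$ the operator $\operatorname{ad}_Y$ has eigenvalues $0$ and some $\beta\neq0$. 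Therefore $\operatorname{ad}_Y$ on $\mathfrak{g}$ has eigenvalues $\{0,0,\beta\}$ with $\beta\neq0$, and after rescaling $Y$ we may take $\beta=1$.

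Next I would build the adapted basis. Let $H$ be a $\operatorname{ad}_Y$-eigenvector with $[Y,H]=H$, and let $X'$ satisfy $ev(s)(X')=V$ and lie over the $0$-eigenvector in the quotient; a priori $[Y,X']=\mu Y$, but the Jacobi identity for $Y,X',H$ forces $\mu=0$, so $\operatorname{ad}_Y$ is semisimple and $[Y,X']=0$. Since $[X',H]$ is again a $1$-eigenvector, $[X',H]=cH$ for some constant $c$; replacing $X'$ by $X'-cY$ (which leaves $ev(s)(X')=V$ unchanged) makes $X'$ central. This proves $\mathfrak{g}=\langle X'\rangle\oplus\langle Y,H\rangle\cong\RR\oplus\mathfrak{aff}$, with $\mathfrak{i}=\langle Y\rangle$ projecting to the homothety factor of $\mathfrak{aff}$ (it is $\operatorname{ad}$-semisimple with nonzero eigenvalue, hence not a translation), which is part (i). As the centre of $\RR\oplus\mathfrak{aff}$ is exactly $\langle X'\rangle$, the field $X'|_S$ is $\mathfrak{g}$-invariant; since $S$ is homogeneous and the isotropy-fixed subspace of $T_sS$ is the single line $\RR V$, invariant fields on $S$ are determined by their value at $s$, so $X'|_S=X$ (both equal $V$ at $s$, of norm $1$). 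This is part (ii).

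For part (iii) I would integrate the commuting, pointwise independent fields $X'|_S$ and $H|_S$ (independent at $s$ because $V$ and $ev(s)(H)=E$ span $T_sS$) to analytic coordinates $(x,h)$ near $s=(0,0)$ with $X'|_S=\partial_x$ and $H|_S=\partial_h$. Writing $Y|_S=a\,\partial_x+b\,\partial_h$, the relations $[Y,X']=0$ and $[Y,H]=H$ give $a_x=b_x=0$, $a_h=0$, $b_h=-1$, and then $Y|_S(s)=0$ forces $a\equiv0$, $b=-h$, i.e.\ $Y|_S=-h\,\partial_h$, yielding the basis $(-h\,\partial_h,\partial_h,\partial_x)$. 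For part (iv), write $g|_S=E\,dx^2+2F\,dx\,dh+G\,dh^2$; invariance under the restricted Killing fields $\partial_x$ and $\partial_h$ forces $E,F,G$ constant, with $E=g(X,X)=1$, and invariance under $Y|_S=-h\,\partial_h$, using $\mathcal{L}_{-h\partial_h}\,dh=-dh$, kills the $dx\,dh$ and $dh^2$ terms, leaving $g|_S=dx^2$.

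The main obstacle is part (i): correctly relating the $\RR$-semisimple linear isotropy action on $T_sU$ to $\operatorname{ad}_Y$ through the evaluation morphism, and invoking the Jacobi identity to exclude a nilpotent Jordan block, so that the single nonzero eigenvalue forces the isomorphism type $\RR\oplus\mathfrak{aff}$ rather than another nonunimodular solvable algebra (such as the one with $\operatorname{ad}$-eigenvalues $1,1,0$). Once $\mathfrak{g}$ and its centre are identified, parts (iii)--(iv) are routine integrations of commuting vector fields and invariance computations.
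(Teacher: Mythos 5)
Your proof is correct, and parts (ii)--(iv) follow essentially the same lines as the paper (uniqueness of the invariant field on the homogeneous surface $S$ for (ii), straightening the commuting fields $X'|_S$ and $H|_S$ and solving the bracket relations for (iii), and killing the $dx\,dh$ and $dh^2$ coefficients by invariance under $-h\,\indel{h}$ for (iv)). The real difference is in part (i). The paper organizes the argument around the derived subalgebra: it shows $[\lieg,\lieg]\neq 0$, rules out $\dim[\lieg,\lieg]=2$ by showing the isotropy would then act unipotently on $T_sS\cong\lieg/\mathfrak{i}$, and then builds the basis from a generator $H$ of the one-dimensional $[\lieg,\lieg]$, using semisimplicity once more to get $[Y,H]=H$ with nonzero coefficient. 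You instead read off the full spectrum $\{0,0,\beta\}$, $\beta\neq 0$, of $\mathrm{ad}_Y$ directly from the $\RR$-semisimple linear isotropy acting on $\lieg/\mathfrak{i}\cong T_sS$, and then exclude the only remaining degeneracy --- a nilpotent Jordan block $[Y,X']=\mu Y$ --- by the Jacobi identity for $(Y,X',H)$, whose $H$-component forces $\mu=0$ (I checked this computation; it does close up, and it also yields $[X',H]\in\RR H$ for free, which you then absorb by the substitution $X'\mapsto X'-cY$). Your route is somewhat more direct: it isolates in one computation why the nonunimodular algebra with $\mathrm{ad}$-eigenvalues $\{1,1,0\}$ or a nontrivial nilpotent part cannot occur, whereas the paper distributes this exclusion over two contradiction arguments; the paper's version, on the other hand, makes no use of lifting eigenvectors through the quotient $\lieg/\mathfrak{i}$ and is closer in spirit to the unipotent-isotropy case treated later. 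Both hinge on the same two inputs: solvability of $\lieg$ and the identification of the isotropy representation on $T_sS$ with $\mathrm{ad}_Y$ on $\lieg/\mathfrak{i}$.
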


\begin{proof} 
(i)  We show  first that the derived Lie algebra $\lieg' = \lbrack \mathfrak{g}, \mathfrak{g}  \rbrack$ is 1-dimensional.  It is a general fact that the derived algebra of a solvable Lie algebra is nilpotent~\cite{Kir}. Remark first that $\lbrack \mathfrak{g}, \mathfrak{g} \rbrack \neq 0$. Indeed,  otherwise $\mathfrak{g}$ is abelian and the action of the isotropy $\mathfrak{i} \subset \mathfrak{g}$ at a point $s \in S$ is trivial on $\mathfrak{g}$ and hence on $T_{s}S$, which is identified with $\mathfrak{g} / \mathfrak{i}$. The isotropy action  on the tangent space $T_sS$ being trivial implies that the isotropy action is trivial on $T_{s} U$ (An element of $O(2,1)$ which acts trivially on a plane in $\RR^3$ is trivial). This implies that the isotropy is trivial at $s \in S$: a contradiction.  As $\mathfrak{g}$ is 3-dimensional, $\lieg'$ is a nilpotent Lie algebra of dimension $1$ or $2$, hence $\mathfrak{g}' \simeq \RR$, or $\mathfrak{g}' \simeq \RR^2$.

Assume, for a contradiction, that $\mathfrak{g}' \simeq \RR^2$.  We first  prove  that the isotropy $\mathfrak{i}$ lies in $\lbrack \mathfrak{g}, \mathfrak{g} \rbrack$.  Suppose this is not the case.  Then $\lbrack \mathfrak{g}, \mathfrak{g} \rbrack \simeq \RR^2$ acts 
  freely and transitively on $S$, preserving the vector field $X$.  Then $X$ is the restriction to $S$ of a Killing vector  field $X' \in \lbrack \mathfrak{g}, \mathfrak{g} \rbrack$.  

Let $Y$ be a generator of the isotropy at $s \in S$. Since $X$ is fixed by the isotropy, one gets, in restriction to $S$,  the following Lie bracket relation: $\lbrack Y, X' \rbrack = \lbrack Y,X \rbrack=aY$,
for some $a \in \RR$. On the other hand, by our assumption, $Y \notin \lbrack \mathfrak{g}, \mathfrak{g} \rbrack$, meaning that $a=0$.  This  implies that $X'$ is a central element in $\mathfrak{g}$. In particular, $\mathfrak{g}'$
is at most one-dimensional: a contradiction.  Hence  $\mathfrak{i} \subset \lbrack \mathfrak{g}, \mathfrak{g} \rbrack$. 
    
Now let $Y$ be a generator of $\mathfrak{i}$, $\{Y,X'  \}$ be generators of $\lbrack \mathfrak{g}, \mathfrak{g} \rbrack$, and   $(Y,X' ,Z)$      be a basis   of $\mathfrak{g}$. The tangent
           space of $S$ at a point $s  \in S$ is identified with $\mathfrak{g} /  \mathfrak{i}$. Denote $\bar{X'}, \bar{Z}$ the projections of $X'$ and $Z$ to this quotient.  The infinitesimal action of $Y$ on this tangent space is given      in the basis
           $\{ \bar{X'}, \bar{Z} \}$ by the matrix 
$$ad(Y) = \left(  \begin{array}{cc}
                                                                 0  &   *\\
                                                                 0     &  0\\
                                                                 
                                                                 \end{array}  \right) $$
because $\mathfrak{g}'  \simeq \RR^2$ and $ad(Y)(\mathfrak{g}) \subset \mathfrak{g}'.$  Moreover, $ad(Y) \neq 0$, since the restriction of the isotropy action to $T_sS$ is injective.  From this form of $ad(Y)$,  we see that the isotropy is unipotent with fixed direction $\RR X'$: a contradiction.

We have proved  that $\lbrack \mathfrak{g}, \mathfrak{g}  \rbrack$ is 1-dimensional. Notice that $\mathfrak{i} \neq \lbrack \mathfrak{g}, \mathfrak{g}  \rbrack$. Indeed, if they are equal, then the action of the isotropy on the tangent space $T_sU$ at $s \in S$ is trivial: a contradiction.  

Let $H$ be a generator of $\lbrack \mathfrak{g}, \mathfrak{g} \rbrack$, and $Y$ the generator of $\mathfrak{i}$.  Then $\lbrack Y, H \rbrack =aH$, with $a \in \RR$.  If $a=0$, then the image of $ad(Y)$, which  lies in $\lbrack \mathfrak{g}, \mathfrak{g} \rbrack$, belongs to   the kernel of $ad(Y)$,  which contradicts semisimplicity of the isotropy.  Therefore $a \neq 0$ and we can assume, by changing  the generator $Y$ of the isotropy, that $a=1$, so $\lbrack Y, H \rbrack =H$. 
  
  Let $X' \in \mathfrak{g}$ be such that  $\{X' ,H \}$ span the kernel of $ad(H)$. Then $( Y, X', H )$ is a basis of
$\mathfrak{g}$. There is $b \in \RR$ such that $\lbrack X', Y\rbrack =bH$.  After replacing 
 $X'$ by $X' +bH$,  we can assume  $\lbrack X', Y\rbrack =0$.  It follows that $\mathfrak{g}$ is the Lie algebra   $\RR \oplus \mathfrak{aff}(\mathbf{R})$. The Killing field $X'$ spans the center, the isotropy $Y$ spans the one-parameter group of homotheties, and $H$ spans the one-parameter group of translations.
 
\smallskip
 
(ii) This comes from the fact that  $X$ is the unique vector field tangent to $S$ invariant by $\lieg$.

\smallskip

(iii) The commuting  Killing vector fields $X'$ and $H$ are nonsingular  on $S$. This implies that, in adapted  coordinates $(x,h)$ on $S$, $H= \frac{\partial}{\partial h}$ and $X=\frac{\partial}{\partial x}$.  Because $[Y,X] = 0$, the restriction of $Y$ to $S$ has the expression $f(h)\frac{\partial}{\partial h}$, with $f$ an analytic function vanishing at the origin. The Lie bracket relation $\lbrack Y, H \rbrack =H$ reads

$$\left[ f(h) \frac{\partial}{\partial h},\frac{\partial}{\partial h} \right] = \frac{\partial}{\partial h},$$
and leads to $f(h)=-h$.  

\smallskip

(iv) Since  $H= \frac{\partial}{\partial h}$ and $X=\frac{\partial}{\partial x}$ are Killing fields, the restriction of $g$  to $S$ admits constant coefficients with respect to  the coordinates $(x,h).$ Since 
$H$ is expanded by the isotropy, it follows that $H$ is of constant $g$-norm equal to $0$. On the other hand, $X$ is of constant $g$-norm equal to one. It follows that the expression of  $g$ on $S$  is $dx^2$.
\end{proof}

\begin{lemma} 
\label{normal form} 
Assume $\lieg$ as in Lemma \ref{Killing} acts quasihomogeneously on $(U,g)$.  In adapted analytic  coordinates $(x,h,z)$ on $U$,
$$g=dx^2+ dhdz+Cz^2dh^2+Dzdxdh \qquad \mbox{for some} \ C, D \in \RR.$$
Moreover, in these coordinates, $\frac{\partial}{\partial x}$, $\frac{\partial}{\partial h}$, and $-h\frac{\partial}{\partial h}+z\frac{\partial}{\partial z}$ are Killing fields.
\end{lemma}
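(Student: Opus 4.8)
The plan is to construct adapted coordinates $(x,h,z)$ in which the three Killing fields take the asserted normal forms, and then to solve the Killing equations $\mathcal L_{X'}g=\mathcal L_H g=\mathcal L_Y g=0$ to read off $g$. First I would fix the first two coordinates. By Lemma~\ref{Killing}, $X'$ is central and $H$ generates the translations, so $[X',H]=0$; both are globally defined Killing fields on $U$ that commute, and on $S$ they restrict to $\del{x}$ and $\del{h}$, hence are linearly independent near $S$. Choosing a base point $p_0\in S$ and an analytic curve $\gamma$ transverse to $S$ at $p_0$, I would define $(x,h,z)\mapsto \phi^x_{X'}\circ\phi^h_H(\gamma(z))$, where $\phi^t_{\bullet}$ denotes time-$t$ flow. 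Because $X'$ and $H$ commute and are tangent to the $\lieg$-invariant surface $S$, this is an analytic chart near $p_0$ in which $X'=\del{x}$, $H=\del{h}$, and $S=\{z=0\}$.

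Next I would pin down $Y$. The relations $[Y,X']=0$ and $[Y,H]=H$ translate into $\del{x}$- and $\del{h}$-independence conditions on the components of $Y$, forcing
$$Y = Y^x(z)\,\del{x} + \bigl(-h+\psi(z)\bigr)\del{h} + Y^z(z)\,\del{z},$$
with $Y^x(0)=\psi(0)=Y^z(0)=0$, since $Y|_S=-h\del{h}$ by Lemma~\ref{Killing}(iii). The main obstacle is to remove the remaining freedom. As the isotropy is $\RR$-semisimple, its linearization at $p_0$ is an element of $\mathfrak{so}(2,1)$ with eigenvalues $\{-1,0,1\}$, the fixed spacelike direction being $\del{x}$ and the $-1$-eigendirection being the isotropic $\del{h}$; the $+1$-eigenspace is therefore isotropic and transverse to $S$. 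I would take $\gamma$ to be the analytic $Y$-invariant curve through $p_0$ tangent to this eigenspace (its existence and analyticity being guaranteed by the real-analytic unstable-manifold theorem). Along such a curve $Y$ is tangent, which forces $Y^x\equiv\psi\equiv 0$, so $Y=-h\del{h}+Y^z(z)\del{z}$ with $(Y^z)'(0)=1$. A final reparametrization of the transverse coordinate, solving the one-dimensional equation $Y^z(z)\,\tfrac{d\tilde z}{dz}=\tilde z$ (which has an analytic solution $\tilde z=z(1+O(z))$ because $Y^z(z)=z(1+O(z))$), leaves $\del{x}$ and $\del{h}$ untouched and yields $Y=-h\del{h}+z\del{z}$. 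Verifying that this curve is transverse, that the reparametrization preserves the earlier normalizations, and handling the analytic conjugacy is the delicate part.

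Finally I would solve the Killing equations. From $\mathcal L_{\del{x}}g=\mathcal L_{\del{h}}g=0$ the coefficients $g_{ij}$ depend on $z$ alone. The flow of $Y$ is $\phi_t(x,h,z)=(x,e^{-t}h,e^{t}z)$, and the isometry condition $\phi_t^*g=g$ forces each $g_{ij}(z)$ to be homogeneous of a definite degree in $z$; expanding in convergent power series and matching exponents shows $g_{xx}$ is constant, $g_{xh}=\beta z$, $g_{hh}=\gamma z^2$, and $g_{hz}$ is constant, while $g_{xz}$ and $g_{zz}$ would have to be homogeneous of negative degree and hence vanish by analyticity at $z=0$. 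The condition $g|_S=dx^2$ from Lemma~\ref{Killing}(iv) gives $g_{xx}=1$; nondegeneracy of $g$, whose matrix has determinant $-g_{hz}^2$, forces $g_{hz}\neq 0$; and a constant rescaling of $z$ (which preserves $Y=z\del{z}$) normalizes $2g_{hz}=1$. Renaming the two surviving constants as $C=\gamma$ and $D=2\beta$ gives exactly $g=dx^2+dh\,dz+Cz^2dh^2+Dz\,dx\,dh$, with $\del{x}$, $\del{h}$, and $-h\del{h}+z\del{z}$ Killing by construction.
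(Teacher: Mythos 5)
Your proof is correct, but it reaches the normal form by a genuinely different route than the paper. The paper builds the transverse coordinate geometrically: it defines $Z$ along $S$ as the second isotropic direction in $X^{\perp}$ normalized by $g(H,Z)=1$, extends it by the geodesic flow, and uses $\nabla_Z Z=0$ together with skew-symmetry of $\nabla X'$ to show that $g(Z,X')$, $g(Z,H)$, $g(Z,Z)$ are constant along $Z$-orbits; this pins down the $dz$-coefficients exactly (no rescaling needed), after which the bracket relations $[Y,H]=H$, $[Y,Z]=-Z$ are \emph{derived} from invariance of the metric and force $Y=-h\,\partial/\partial h+z\,\partial/\partial z$. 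You instead produce the transverse coordinate dynamically, taking $\gamma$ to be the strong unstable curve of $Y$ at $p_0$ (tangent to the $+1$-eigenspace of the $\RR$-semisimple isotropy) and then linearizing the one-dimensional restriction $Y^z(z)\,\partial/\partial z$; the form of the metric then drops out of the weights of the $g_{ij}$ under the explicit flow $(x,h,z)\mapsto(x,e^{-t}h,e^{t}z)$, with a final rescaling to normalize $g_{hz}$. Your approach buys a cleaner final computation -- the six Killing PDEs are replaced by a homogeneity/degree count, and the vanishing of $g_{xz}$ and $g_{zz}$ is immediate from analyticity -- at the cost of invoking the real-analytic strong unstable manifold theorem (legitimate here, since the unstable eigenvalue $+1$ strictly dominates the remaining eigenvalues $0$ and $-1$, so the invariant curve is unique and analytic) and the analytic linearization of a one-dimensional hyperbolic zero. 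The paper's construction stays entirely within elementary Lorentzian geometry (geodesic flow and the Koszul identity) and has the mild advantage that the geodesic field $Z$ is canonically attached to the data, which is reused in the subsequent computation of the full Killing algebra. Both arguments correctly use Lemma~\ref{Killing}(iii)--(iv) to fix $g|_S=dx^2$, and your bookkeeping of the eigenvalue signs (the $-1$-eigendirection being $H$, the $+1$-eigendirection isotropic and transverse to $S$) is accurate.
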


\begin{proof} 
%Being central,  $X'$ is of constant norm (equal to $1$ after normalization), which implies that $X'$ is geodesic. We consider the Killing field $X$ (of constant norm equal to $1$)  and the Killing field $H$ which commutes with $X$
%and which in restriction to $S$ is  nonsingular and of constant norm $0$ (which is  $\frac{\partial}{\partial h}$ on $S$). 
%We show now that $H$ is also geodesic in restriction to $S$. 
%In restriction to $S$, the vector field  $H$ is of constant norm equal to $0$. This implies that $\nabla_HH$ is orthogonal to $X$. It follows that $\nabla_HH= \lambda H$, for some $\lambda \in \RR$.
%Since the isotropy preserves $\nabla$ and expands $H$ this implies $\lambda=0$.
Consider the commuting Killing vector fields $X'$ and $H$ constructed  in  Lemma~\ref{Killing}. Their restrictions to $S$ have  the expressions $H= {\partial}/{\partial h}$ and $X={\partial}/{\partial x}$. Recall that on $S$, the vector field $H$ is of constant $g$-norm equal to $0$
and $X$ is of constant $g$-norm equal to one. Point (iv) in Lemma~\ref{Killing} also  shows that $g(X,H)=0$ on $S$. Moreover, being central, $X'$ is of constant $g$-norm   on $U \setminus S$, hence of constant $g$-norm one on  all of $U$.

Define a geodesic vector field $Z$ as follows. At each point $s \in S$, there exists a unique tangent vector $Z_s$, transverse to $T_sS$, such that $g(Z_s,Z_s)=0, \ g(X_s,Z_s)=0,$ and $g(H_s,Z_s)=1$. 
In fact, $Z_s$ spans the second isotropic line (other than that generated by $H_s$)   in  $X^{\bot}_s$.  In this line $Z_s$ is uniquely determined by the relation $g(H_s,Z_s)=1$.  Now $X'$ and $H$ are Killing and, in restriction to $S$, commute. So along $S$, the vector field $Z$ is stable by the flow of $X$ and $H$. Now extend $Z$ via the geodesic flow: 
$$Z(\exp_s(tZ_s)) := (\exp_s)_{*tZ_s}(Z_s) = \frac{\mbox{d}}{\mbox{d}t} \exp_s(tZ_s)$$
The resulting geodesic vector field is well defined on a sufficiently small open neighborhood of $S$ in $U$.  Since $X'$ and $H$ are Killing, their flows commute with the exponential map, so $Z$ commutes with $X'$ and $H$.

 The image of $S$ through the flow of  $Z$ defines a foliation by surfaces. Each leaf is given by $\exp_S(zZ)$, for some $z$ small enough. The leaf $S$ corresponds to $z=0$.

 Let $(x,h,z)$ be analytic coordinates in the neighborhood of the origin such that $X'={\partial}/{\partial x}, H={\partial}/{\partial h}, Z={\partial}/{\partial z}$.
The scalar product $g(Z,X')$ is constant along the orbits of $Z$. This comes from the following classical computation~:
$$Z \cdot g(X',Z)=g(\nabla_Z{X'}, Z)+g(X',\nabla_ZZ)=0$$ 
since $\nabla_ZZ=0$ and $\nabla_{\cdot}X'$ is skew-symmetric with respect to $g$.  The same is true for $g(Z, H)$. In particular, the coefficients in $g$ of $dxdz$ and $dhdz$ are constant on the orbits of $Z$.

Moreover, the invariance of the metric by the commutative Killing algebra generated by
$X'$ and $H$ implies that $dxdz$ and $dhdz$ are also constant along the orbits of $X'$ and of $H$. This implies that the coefficients of $dxdz$ and $dhdz$ are $0$ and $1$, respectively, not only on $S$, but   over all of $U$.  

The coefficients of $dh^2$ and $dxdh$ depend only on $z$.  Then 
$$g=dx^2+ dhdz+c(z)dh^2+d(z)dxdh$$ 
with $c$ and $d$ analytic functions which both vanish at $z=0$.

Next we use the invariance of $g$  by $Y$.  Recall that $\lbrack Y,X' \rbrack =0$ and $\lbrack Y,H \rbrack=H$. Note that $Y$ preserves the two isotropic directions of $X'^{\bot}$, which are spanned by $Z$ and $H-d(z)X'$. From $g(X',H-d(z)X') \equiv 1$, compute
\begin{eqnarray*}
0 & = & Y.(g(X',H-d(z)X')) = g([Y,X'],H-d(z)X') + g(X',[Y,H-d(z)X']) \\
& = & g(X',H) - (Y.d) g(X',X') = d(z) - (Y.d)(z),
\end{eqnarray*}
so $Y.d = d$.  Then $[Y,H-d(z)X'] = H-d(z)X'$.
Next, from $g(H-d(z)X',Z) \equiv 1$, 
$$ 0 = g([Y,H-d(z)X'],Z) + g(H-d(z)X',[Y,Z])= 1 + g(H-d(z)X',[Y,Z]),$$ 
so $\lbrack Y, Z \rbrack =-Z$.
Now, since $Y$ and $X'$ commute, the general expression for $Y$ is 
$$Y = u(h,z) \frac{\partial}{\partial h} + v(h,z) \frac{\partial}{\partial z}+t(h,z)\frac{\partial}{\partial x}$$
with $u,v$, and $t$ analytic functions, where $u(h,0) = -h$, and $v$ and $t$ vanish on $\{ z=0 \}$.

The other Lie bracket relations read 
$$ \lbrack u(h,z) \frac{\partial}{\partial h} + v(h,z) \frac{\partial}{\partial z}+t(h,z)\frac{\partial}{\partial x}, \frac{\partial}{\partial h} \rbrack =\frac{\partial}{\partial h} $$ 
and
$$ \lbrack u(h,z) \frac{\partial}{\partial h} + v(h,z) \frac{\partial}{\partial z}+t(h,z)\frac{\partial}{\partial x}, \frac{\partial}{\partial z} \rbrack =-\frac{\partial}{\partial z}. $$
 The first relation gives
 $$ \frac{\partial u}{\partial h}=-1 \qquad  \frac{\partial v}{\partial h}=0 \qquad \frac{\partial t}{\partial h}=0.$$
  The second one leads to  
$$\frac{\partial u}{\partial z}=0 \qquad \frac{\partial v}{\partial z}=1 \qquad \frac{\partial t}{\partial z}=0.$$
  We get 
$$u(h,z)=-h \qquad v(h,z)=z \qquad t(h,z)=0.$$
  Hence, in our  coordinates,
$Y=-h {\partial}/{\partial h}+z \partial/\partial z$. The invariance of $g$ under the action of this linear vector field implies $c(e^{-t}z) e^{2t}=c(z)$ and
$d(e^{-t}z)e^t=d(z)$, for all $t \in \RR$. This implies then that $c(z)=Cz^2$ and $d(z) = Dz$, with $C, D$ real constants.
\end{proof}

\subsection{Computation of the Killing algebra}

We need to understand now whether  the  metrics $$g_{C,D}=dx^2+ dhdz+Cz^2dh^2+Dzdxdh$$ constructed in Lemma~\ref{normal form} really are quasihomogeneous. In other words, do the metrics in this family
 admit other Killing fields than ${\partial}/{\partial x}$, ${\partial}/{\partial h}$ and $- h {\partial}/{\partial h} +z {\partial}/{\partial z}$ ?  In this section we compute the full Killing algebra  $\lieg$ of $g_{C,D}$.
In particular, we obtain that the metrics $g_{C,D}=dx^2+ dhdz+Cz^2dh^2+Dzdxdh$ always admit additional Killing fields and, by Lemma \ref{dim} (ii)  are locally homogeneous.
  
The formula for the Lie derivative of $g$ (see, eg, ~\cite{kobayashi-nomizu}) gives
  $$( L_{T}g_{C,D}) \left( \frac{\partial}{\partial x_i}, \frac{\partial}{\partial x_j} \right)=T \cdot g_{C,D} \left( \frac{\partial}{\partial x_i}, \frac{\partial}{\partial x_j} \right) + g_{C,D}\left( \left[ \frac{\partial}{\partial x_i}, T \right], \frac{ \partial}{\partial x_j} \right)+g_{C,D} \left( \frac{\partial}{\partial x_i}, \left[ \frac{\partial}{\partial x_j}, T \right] \right).$$
   
Let $T=\alpha \indel{x} + \beta \indel{h} + \gamma \indel{z}$.  
The pairs 
$$\left( \frac{\partial}{\partial x_i}, \frac{\partial}{\partial x_j} \right) =
(1) \left( \frac{\partial}{\partial z}, \frac{\partial}{\partial z} \right)  \ \   (2) \left(  \frac{\partial}{\partial x}, \frac{\partial}{\partial x} \right) \ \  (3) \left(  \frac{\partial}{\partial x}, \frac{\partial}{\partial z} \right) \ \
(4)  \left( \frac{\partial}{\partial x}, \frac{\partial}{\partial h} \right) \ \ (5) \left(  \frac{\partial}{\partial h}, \frac{\partial}{\partial z} \right) \ \ (6) \left(  \frac{\partial}{\partial h}, \frac{\partial}{\partial h} \right) 
$$

give the following system of PDEs on $\alpha, \beta$ and $\gamma$ in order for $T$ to be a Killing field:
   
\begin{eqnarray}
\label{primera} 0 & = & \beta_{z},\\
\label{segunda} 0 & = & \alpha_{x}+Dz  \beta_{x},\\
\label{tercera} 0 & = & \beta_{x}+Dz \beta_z+ \alpha_z,\\
\label{cuarta} 0 & = & \gamma D+Dz \alpha_x+Cz^2 \beta_x+ \gamma_x+ \alpha_h +Dz \beta_h,\\
\label{quinta} 0 & = & \beta_{h}+ Cz^2 \beta_z+Dz \alpha_z+ \gamma_z,\\
\label{sexta} 0 & = & zC \gamma +Cz^2 \beta_h + Dz \alpha_h+ \gamma_h.
\end{eqnarray}

The following proposition finishes the proof of Theorem~\ref{main} in the case of semisimple isotropy on $S$:

\begin{proposition} \label{loc hom}
The Lorentz metrics $g_{C,D}$ are locally homogeneous for all $C,D \in \RR$.
\end{proposition}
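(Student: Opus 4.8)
The plan is to show that the Killing algebra $\lieg$ of $g_{C,D}$ is transitive on all of $U$, which by Definition~\ref{def.lochom.transitive} is precisely local homogeneity. The three fields $\partial/\partial x$, $\partial/\partial h$, and $-h\,\partial/\partial h + z\,\partial/\partial z$ produced in Lemma~\ref{normal form} already span the tangent space at every point with $z \neq 0$ (their determinant there is $z$), so the only locus where transitivity can fail is the surface $S = \{z = 0\}$, where these three fields span only the $\{\partial/\partial x,\partial/\partial h\}$-plane. Hence it suffices to exhibit one additional Killing field whose $\partial/\partial z$-component does not vanish along $S$; together with $\partial/\partial x$ and $\partial/\partial h$ it will then span $T_sU$ for every $s \in S$.

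First I would solve the overdetermined linear system \eqref{primera}--\eqref{sexta} for $T = \alpha\,\partial/\partial x + \beta\,\partial/\partial h + \gamma\,\partial/\partial z$. Equation \eqref{primera} gives $\beta = \beta(x,h)$. The compatibility condition $\alpha_{xz} = \alpha_{zx}$ between \eqref{segunda} and \eqref{tercera} forces the ODE $\beta_{xx} = D\beta_x$, and integrating \eqref{segunda}, \eqref{tercera}, \eqref{quinta} expresses $\alpha$ and $\gamma$ in terms of $\beta$ and two functions of integration, namely $\alpha = -\beta_x z + a(h)$ and $\gamma = -\beta_h z + \tfrac{D}{2}\beta_x z^2 + c(x,h)$. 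Substituting these into the remaining equations \eqref{cuarta} and \eqref{sexta} and requiring each power of $z$ to vanish separately yields a small closed set of constraints: $\beta_{xh}=0$, $C\beta_x = 0$, $c_h = 0$, together with $Dc + c_x + a' = 0$ and $Cc + Da' - \beta_{hh} = 0$.

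Reading off these constraints shows that the solution space is strictly larger than the three-dimensional span of Lemma~\ref{normal form}; concretely one checks by direct substitution that
\[
T = -Dh\,\frac{\partial}{\partial x} + \frac{C-D^2}{2}\,h^2\,\frac{\partial}{\partial h} + \left(1 - (C-D^2)hz\right)\frac{\partial}{\partial z}
\]
satisfies \eqref{primera}--\eqref{sexta} for every $C,D \in \RR$. Since its $\partial/\partial z$-component equals $1$ on $S$, the fields $\partial/\partial x,\ \partial/\partial h,\ T$ are transitive on $S$, while $\partial/\partial x,\ \partial/\partial h,\ -h\,\partial/\partial h + z\,\partial/\partial z$ are transitive off $S$. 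Thus $\lieg$ is transitive on all of $U$ and $g_{C,D}$ is locally homogeneous. Equivalently, the existence of this fourth independent Killing field shows $\dim\lieg \geq 4$; as $g_{C,D}$ is locally homogeneous away from $S$, it cannot be quasihomogeneous without contradicting Lemma~\ref{dim}(ii), so it is locally homogeneous.

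The main obstacle is the bookkeeping in solving the overdetermined system: the order of integration must be arranged so that the mixed-partial compatibility conditions pin down $\beta$, and one must verify that each function of integration depends only on the variables claimed. A secondary subtlety is that a naive ansatz such as $\beta \equiv 0$ produces a transverse Killing field only in the degenerate case $C = D^2$; letting $\beta$ carry the $h^2$-term above is exactly what makes the construction uniform in $(C,D)$. Once the candidate $T$ is written down its verification is routine, so the essential content lies in extracting its form from the structure of the constraints.
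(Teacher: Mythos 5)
Your proof is correct and follows essentially the same route as the paper: you exhibit an explicit fourth Killing field (yours is exactly $-1$ times the $T$ the paper writes down), verify it solves the system (\ref{primera})--(\ref{sexta}), and observe that its $\partial/\partial z$-component is nonzero along $S=\{z=0\}$, which yields transitivity of $\lieg$ everywhere. The extra discussion of how to extract $T$ from the overdetermined system is motivation rather than a different argument, so the two proofs coincide in substance.
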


\begin{proof}  
%If $C=0$,  one directly checks that $\alpha =\beta =0$ and $\gamma=e^{-Dx}$ is a solution of the PDE system, meaning that $e^{-Dx} \indel{z}$ is an extra Killing field  Killing field.  
It is straightforward to verify that 
$$T= Dh \frac{\partial}{\partial x} + \frac{1}{2}(D^2 - C)h^2  \frac{\partial}{\partial h} + ((C-D^2) zh- 1)  \frac{\partial}{\partial z}$$
satisfies equations (1)--(6).  Note that $T(0) = - \partial/\partial z$, so $T \notin \lieg$, and $(U,g)$ is locally homogeneous.
%Of course for $C=D=0$, the metric $g_{C,D}$ is the standard flat metric.
%If  $D \neq 0$, one directly checks that an extra Killing field is given by~:
% $T=ah \indel{x}+ \frac{1}{2}bh^2 \indel{h} +(-bzh- \frac{a}{D}) \indel{z}$, with $a,b$ such that $a(D - \frac{C}{D})=b$.
\end{proof}

We explain now our method to find the extra Killing field $T$  in Proposition~\ref{loc hom}, and we compute the full Killing algebra, $\lieg$, of $g_{C,D}$.  Recall the $n$-dimensional Lorentzian manifolds $\mbox{AdS}^n, \mbox{Min}^n,$  and $\mbox{dS}^n$, of constant sectional curvature $-1, 0$, and $1$, respectively (see, eg, \cite{Wolf}).  Recall also that $\mbox{AdS}^3$  is isometric to $SL(2,\RR)$ with the bi-invariant Cartan-Killing metric.

\begin{proposition} 
\label{prop.ss.classification}
\ \ 
\begin{enumerate}[(i)]
\item If $D \neq 0$ and $C \notin \{0, D^2\}$, then $(U,g_{C,D})$ is locally isometric to a left-invariant metric on $SL(2, \RR)$  with $\lieg \cong \RR \oplus \mathfrak{sl}(2, \RR)$. The isotropy is the graph of a Lie algebra homomorphism of the $\RR$ factor to the subalgebra spanned by a $\RR$-semisimple element of $\mathfrak{sl}(2,\RR)$.

\item If $D \neq 0$ and $C = D^2$, then $(U,g_{C,D})$ is locally isometric to a left-invariant metric on the Heisenberg group with $\lieg \cong \RR \ltimes \mathfrak{heis}$.  The isotropy is the $\RR$ factor, which acts by a semisimple automorphism of $\mathfrak{heis}$.

\item If $C=0$ and $D \neq 0$, then $(U,g_{C,D})$  is locally isometric to $\mbox{AdS}^3$, so $\lieg \cong \mathfrak{sl}(2,\RR) \oplus \mathfrak{sl}(2,\RR)$.

\item  If  $C \neq 0$ and $D =0$, then $(U, g_{C,D})$ is locally isometric to $\RR \times \mbox{dS}^2$, for which $\lieg \cong \RR \oplus \mathfrak{sl}(2, \RR)$.  The isotropy is generated by a semisimple element of $\mathfrak{sl}(2,\RR)$. 

\item  If $C=0$ and $D=0$, then $(U,g_{C,D})$ is locally isometric to $\mbox{Min}^3$, so $\lieg \cong \mathfrak{sl}(2,\RR) \ltimes \RR^3$.
\end{enumerate}
\end{proposition}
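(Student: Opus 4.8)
The plan is to determine the full Killing algebra $\lieg$ of each $g_{C,D}$ by integrating the linear system (1)--(6) outright, and then to recognize both $\lieg$ and the metric from standard models. Writing $T = \alpha\,\indel{x} + \beta\,\indel{h} + \gamma\,\indel{z}$, equation (1) gives $\beta=\beta(x,h)$; equation (3) gives $\alpha_z = -\beta_x$, so $\alpha = -\beta_x z + a(x,h)$, and then (2) forces $a=a(h)$ and $\beta_{xx}=D\beta_x$; equation (5) integrates to $\gamma = -\beta_h z + \tfrac{D}{2}\beta_x z^2 + c(x,h)$. Substituting these into (4) and (6) and separating powers of $z$ yields $\beta_{xh}=0$, $C\beta_x=0$, $c=c(x)$ with $c'+Dc=\lambda$ for a constant $\lambda$, $a'=-\lambda$, and $\beta_{hh}=Cc(x)-D\lambda$. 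I expect the count of free integration constants to be governed entirely by whether $C$ vanishes: for $C\neq 0$ one is forced to $\beta_x=0$ and (since $Cc(x)$ must be constant) $c\equiv c_0$, leaving the four parameters $c_0,\lambda,$ and the two constants in $\beta=\tfrac{(C-D^2)c_0}{2}h^2+Ph+Q$; for $C=0$ both the constraint $C\beta_x=0$ and the forcing of $c$ to be constant disappear, so an extra mode in $\beta$ and one in $c$ survive, raising the count to six. Thus $\dim\lieg=6$ exactly when $C=0$ and $\dim\lieg=4$ otherwise, consistent with the fact that a three-dimensional Lorentz metric with a six-dimensional Killing algebra has constant sectional curvature (see the proof of Lemma~\ref{dim}).

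In the four-dimensional regime ($C\neq 0$) I would take the basis $K_1=\indel{x}$, $K_2=\indel{h}$, $W=h\,\indel{h}-z\,\indel{z}$, and $K_4=-T$ with $T$ as in Proposition~\ref{loc hom}, and compute structure constants. Here $K_1$ is central, $[W,K_2]=-K_2$, $[W,K_4]=K_4$, and the decisive bracket is $[K_4,K_2]=DK_1-(C-D^2)W$. Writing $\kappa=C-D^2$ and $\hat H=[K_4,K_2]=DK_1-\kappa W$, one gets $[\hat H,K_4]=-\kappa K_4$ and $[\hat H,K_2]=\kappa K_2$. When $\kappa\neq 0$ the span of $(\hat H,K_4,K_2)$ is perfect with $\RR$-split semisimple $\hat H$, hence a copy of $\mathfrak{sl}(2,\RR)$, and $\lieg\cong\RR\oplus\mathfrak{sl}(2,\RR)$; when $\kappa=0$ (that is $C=D^2$) the element $\hat H=DK_1$ is central, $(K_1,K_4,K_2)$ spans $\mathfrak{heis}$, and $W$ acts as the semisimple derivation with eigenvalues $1,-1,0$, so $\lieg\cong\RR\ltimes\mathfrak{heis}$. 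In all cases the isotropy at the origin is $\RR W$; whether $\hat H$ has a nonzero central $K_1$-component, i.e.\ whether $D\neq 0$ or $D=0$, is exactly what distinguishes the graph-type isotropy of case (i) from isotropy by a semisimple element of the $\mathfrak{sl}(2,\RR)$-factor in case (iv). To pass from $\lieg$ to the geometry I would exhibit a three-dimensional subalgebra transverse to $\RR W$, which then acts simply transitively: the $\mathfrak{sl}(2,\RR)$-summand in case (i) and $\mathfrak{heis}$ in case (ii) identify $g$ with a left-invariant metric on $SL(2,\RR)$, respectively on the Heisenberg group. In case (iv), $D=0$ makes $g=dx^2+(dh\,dz+Cz^2dh^2)$ a metric product, and a short two-dimensional computation gives the surface factor constant Gaussian curvature $4C\neq 0$, yielding $\RR\times\mbox{dS}^2$ with $\lieg\cong\RR\oplus\mathfrak{sl}(2,\RR)$.

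For the two constant-curvature cases $C=0$ I would argue separately. When $D=0$ as well, the substitution $h=u+v$, $z=u-v$ turns $g$ into $dx^2+du^2-dv^2$, visibly flat, so $(U,g)$ is locally $\mbox{Min}^3$ with $\lieg\cong\mathfrak{sl}(2,\RR)\ltimes\RR^3$. When $D\neq 0$ the metric has constant sectional curvature, and the only remaining point is its sign; computing a single sectional curvature---that of the plane spanned by $\indel{h}$ and $\indel{z}$ at the origin---gives the constant value $-D^2/4<0$, so $(U,g)$ is locally $\mbox{AdS}^3$, whose isometry algebra is $\mathfrak{sl}(2,\RR)\oplus\mathfrak{sl}(2,\RR)$. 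This last step---pinning down the sign of the curvature, equivalently ruling out $\mbox{dS}^3$---is the one place where the bracket bookkeeping does not suffice and a genuine curvature computation (or, alternatively, an identification of the solvable radical of the six-dimensional $\lieg$ as trivial, forcing the semisimple non-simple type $\mathfrak{so}(2,2)$) is needed; it is where I expect the main, though modest, difficulty to lie. Everything else reduces to the integration of (1)--(6) and the structure-constant calculation above.
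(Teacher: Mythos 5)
Your proposal is correct, and it reaches the classification by a genuinely more computational route than the paper's. The paper first pins down the abstract bracket relations that any fourth Killing field $T$ must satisfy ($[Y,T]=-T$, $[T,X']=cT$, $[H,T]=aX'-bY$) using the isotropy representation, $g$-skew-symmetry of $\mathrm{ad}(Y)$, and the Jacobi identity, and only then solves the system (1)--(6) under the normalizations $c=0$, $a=1$; the dimension jump at $C=0$ is detected there by exhibiting the extra field $e^{-Dx}\,\partial/\partial z$ ad hoc and invoking Lemma~\ref{dim}. You instead integrate (1)--(6) wholesale and read off $\dim\lieg$ directly ($6$ iff $C=0$, else $4$), which has the side benefit of proving non-constant curvature in cases (i), (ii), (iv) without the paper's separate ``straightforward computation.'' Your endgame also differs in the two places where the paper is most terse: in case (iii) you fix the sign of the constant curvature by computing one sectional curvature (the value $-D^2/4$ on $\mathrm{span}\{\partial/\partial h,\partial/\partial z\}$ is correct, and that plane is nondegenerate when $C=0$), rather than relying on the assertion that a constant-curvature left-invariant metric on $SL(2,\RR)$ must be negatively curved; and in case (iv) you use the visible metric-product structure and the Gaussian curvature $4C$ of the surface factor. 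Your structure constants agree with the paper's: $\hat H=[K_4,K_2]=DK_1-(C-D^2)W$ matches $[H,T]=DX'+(C-D^2)Y$ under the sign conventions $W=-Y$, $K_4=-T$. Two small points to repair in the write-up. First, in the $C\neq 0$ count, $\lambda$ and $c_0$ are not independent parameters: the relation $c'+Dc=\lambda$ with $c\equiv c_0$ forces $\lambda=Dc_0$; the fourth free constant is the constant of integration in $a(h)=-\lambda h+a_0$, i.e., the coefficient of the already-known Killing field $\partial/\partial x$. The total of four is unaffected. Second, in case (iv) the surface factor has curvature $4C$, whose sign depends on that of $C$, so the identification with $\mbox{dS}^2$ holds only up to a negative homothety when $C<0$ (the paper makes the same allowance with ``up to homotheties on the two factors'').
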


\begin{proof}  Recall that  $(x,h,z)$ are  analytic  coordinates on $U$, with $S=z^{-1}(0)$, such  that all 
 Lorentz metrics $g_{C,D}$ admit the Killing fields $X'=\frac{\partial}{\partial x}$, $Y=-h\frac{\partial}{\partial h}+z\frac{\partial}{\partial z}$ and $H=\frac{\partial}{\partial h}$, for which the Lie bracket relations are  $\lbrack Y,X' \rbrack = \lbrack H, X' \rbrack =0$ and $\lbrack Y,H \rbrack=H$.
Moreover, Proposition~\ref{loc hom} shows that all Lorentz metrics  $g_{C,D}$ are locally homogeneous and that  their  full Killing algebra $\mathfrak{g}$  is of dimension at least four.  In particular, the  Killing algebra $\mathfrak{g}$ strictly contains the previous   three-dimensional Lie algebra as a subalgebra $\liel$ acting quasihomogeneously in the neighborhood of the origin.

Assuming $g_{C,D}$ is not of constant sectional curvature, then Lemma \ref{dim} (i) implies $\mbox{dim } \lieg = 4$.  We first derive some information on the algebraic structure of $\lieg$ in this case.     

If $\mbox{dim } \lieg = 4$, then it is generated by $X', Y, H,$ and an additional Killing field $T$.  Since  the isotropy $\RR Y$ at the origin  fixes the spacelike vector $X(0)$ and expands $H$, we can choose
             a fourth generator  $T$ of $\mathfrak{g}$ evaluating at the origin to a generator  of the second isotropic direction of the Lorentz plane $X(0)^{\bot}$.  As the action of $Ad(Y)$ on $\mathfrak{g}$ is $g$-skew symmetric, we get at the origin : $\lbrack Y, T \rbrack (0)=-T(0)$. Hence $\lbrack Y, T \rbrack =-T + aY$ for some
             constant $a \in \RR,$ and we can replace $T$ with  $T -aY$ in order that $\lbrack Y,T \rbrack =-T$.  Since $X'$ and $Y$ commute, $[X',T]$   is also  an eigenvector of $ad(Y)$ with eigenvalue $-1$.  This eigenspace of $ad(Y)$ is one-dimensional, so $\lbrack T, X' \rbrack= c T$, for some  $c \in \RR$.
         
The Jacobi relation 
$$\lbrack Y, \lbrack T, H \rbrack \rbrack=  \lbrack  \lbrack Y, T \rbrack , H \rbrack +  \lbrack T, \lbrack Y, H \rbrack \rbrack =
     \lbrack -T, H \rbrack + \lbrack T, H \rbrack =0$$
says that $ \lbrack T, H \rbrack $ commutes with  $Y$.   The centralizer  of $Y$ in $\lieg$ is $\RR Y \oplus \RR X'$.  We conclude that $\lbrack H,T \rbrack =aX'- bY$, for some  $a,b \in \RR$.    

(i)  Assume $D \neq 0$ and $C \notin \{0, D^2\}$. A straightforward computation shows that $g_{C,D}$ is not of constant sectional curvature.   We will construct  a Killing field $T = \alpha \indel{x} + \beta \indel {h} + \gamma \indel{z}$, meaning the functions $\alpha$, $\beta$ and $\gamma$ solve the PDE system (1)--(6). We will moreover construct it so that $c=0$ and $a =1$.

First we use  the Lie bracket relations derived above for $T$ and $\liel$.
  Remark  that,  since $T$ and $X'$ commute, the coefficients $\alpha, \beta$ and $\gamma$ of $T$ do not depend on the coordinate $x$; in particular, equation (\ref{segunda}) is satisfied.
  The relation $\lbrack H, T\rbrack =aX' -bY$ reads, when $a = 1$, 
$$\left[  \frac{\partial}{\partial h}, T \right] = \frac{\partial}{\partial x} + b \left( h  \frac{\partial}{\partial h} - z  \frac{\partial}{\partial z} \right).$$
  This leads to $\alpha_h=1, \beta_h=bh,$ and $\gamma_h=-bz$.
  Using equation (\ref{primera}), we obtain $\beta=\frac{1}{2}bh^2 + \beta_0$.  We can take the additive constant $\beta_0 = 0$ because $\frac{\partial}{\partial h} \in \mathfrak{l}$.  Now equation (\ref{cuarta}) gives $\gamma=-bz h - 1/D$.
  
Equation (\ref{sexta}) now reads
$$ 0 = zC( - \frac{1}{D} - zbh) + Cz^2bh + Dz - bz = - \frac{Cz}{D} + Dz - bz$$
which yields $b = D - C/D$.  Now $\gamma$ can be written $-1/D - zh(D - C/D)$.
  
Equation (\ref{tercera}) says $\alpha_z = 0$, so we conclude $\alpha = h$.
The resulting vector field is 
\begin{eqnarray}
\label{eqn.T}
 T = h  \frac{\partial}{\partial x} + \frac{1}{2}(D - \frac{C}{D}) h^2  \frac{\partial}{\partial h} +  \left( zh ( \frac{C}{D} - D)- \frac{1}{D} \right) \frac{\partial}{\partial z}.
\end{eqnarray}
Note that the coefficients of $T$ also satisfy equation (\ref{quinta}), so $T$ is indeed a Killing field.

We obtained this solution setting $c=0$, so the Lie algebra $\lieg$ generated by $\{ T,X',Y,H \}$ contains $X'$ as a central element.  We also set $a=1$, and found $b = D - C/D$, so $[H,T]= X' + (C/D - D) Y$, which we will call $Y'$.  It is straightforward to verify that for $T$ as above, $[Y,T] = -T$.  Under the hypothesis $C \neq D^2$, the Lie subalgebra generated by $\{ Y', H, T \}$ is isomorphic to $\mathfrak{sl}(2,\RR)$, with $Y'$ $\RR$-semisimple, and it acts transitively on $U$.    Consequently, $g_{C,D}$ is locally isomorphic to a left-invariant Lorentz  metric on $SL(2, \RR)$.  The full Killing algebra is  $\lieg \cong \RR \oplus \mathfrak{sl}(2, \RR)$, with center generated by $X'$, and isotropy $\RR Y = \RR(X' + Y')$.  This terminates the proof of point (i).

\smallskip

(ii) When $D \neq 0$ and $C= D^2$, then (\ref{eqn.T}) still solves the Killing equations.  The bracket relations are the same, but now $[H,T] = X'$.  Then $\lieg \cong \RR \ltimes \mathfrak{heis}$, where the $\mathfrak{heis}$ factor is generated by $\{ H, T, X' \}$  and  acts transitively, and the  $\RR$ factor is generated by the isotropy $Y$, which acts by a semisimple automorphism on $\mathfrak{heis}$.  Up to homothety, there is a unique left-invariant Lorentz metric on $\mbox{Heis}$ in which $X'$ is spacelike, by Proposition 1.1 of \cite{dumzeg}, where it is called the \emph{Lorentz-Heisenberg geometry}.   

\smallskip

 (iii)  When  $C=0$ and $D \neq 0$, then (\ref{eqn.T}) again solves the Killing equations.  It now simplifies to
$$  T = h  \frac{\partial}{\partial x} + \frac{1}{2}D h^2  \frac{\partial}{\partial h} +  \left( -zh D- \frac{1}{D} \right) \frac{\partial}{\partial z}.$$

The bracket relation is $[H,T] = X' - DY$, and $\lieg$ still contains a copy of $\RR \oplus \mathfrak{sl}(2, \RR)$, with center generated by $X'$ and $\mathfrak{sl}(2, \RR)$ generated by $\{ X'- DY,H,T' \}$.  The $\mathfrak{sl}(2, \RR)$ factor still acts simply transitively. 
On the other hand, one directly checks that $\alpha =\beta =0$ and $\gamma=e^{-Dx}$ is a solution of the PDE system, meaning that $e^{-Dx} \indel{z}$ is also a Killing field.  From 
$$ \left[ X', e^{-Dx} \frac{\partial}{\partial z} \right] = - D e^{-Dx}  \frac{\partial}{\partial z} \neq 0$$
 it is clear that this additional Killing field does not belong to the subalgebra generated by $\{ T, X', Y, H \}$, in which $X'$ is central.   It follows that the Killing algebra is of dimension at least five, hence six by Lemma~\ref{dim} (i), which implies that $g_{0,D}$ is of constant sectional curvature.  Since $g_{0,D}$ is locally isomorphic to a left-invariant Lorentz metric
 on $SL(2, \RR)$, the sectional curvature is negative. Up to normalization, $g_{0,D}$ is locally isometric to $\mbox{AdS}^3$.
 
\smallskip

 (iv)  The Killing field $T$ in (\ref{eqn.T}) multiplied by $D$ gives
$$ T_D = D h  \frac{\partial}{\partial x} + \frac{1}{2}(D^2 - C) h^2  \frac{\partial}{\partial h} +  \left( zh ( C - D^2)- 1 \right) \frac{\partial}{\partial z}.$$ 

Setting $C \neq 0$ and $D =0$ gives
$$T_0 =   - \frac{Ch^2}{2}   \frac{\partial}{\partial h} +  \left( zhC - 1 \right) \frac{\partial}{\partial z}$$ 
which is indeed  a Killing field of $g_{C,0}$.  The brackets are $[X',T_0] = 0, [H,T_0] = CY$, and $[Y,T_0] = - T_0$.  As in case (i), the Killing  Lie algebra   contains a copy of $\RR \oplus \mathfrak{sl}(2, \RR)$, with center generated by $X'$, and $\mathfrak{sl}(2, \RR)$ generated by $\{ Y,H,T_0 \}$.
Here the isotropy generator $Y$ lies in the $\mathfrak{sl}(2,\RR)$-factor, which acts with two-dimensional orbits.  This local $\mathfrak{sl}(2,\RR)$-action defines a two-dimensional foliation  tangent to $X'^{\bot}$.  Recall that $X'$ is of constant $g$-norm equal to one, so $X'^{\bot}$ has Lorentzian signature.  The metric is, up to homotheties on the two factors,  locally isomorphic to the product $\RR \times \mbox{dS}^2$.

\smallskip

(v) If   $C=D=0$, then $g_{C,D}$ is flat and $\lieg \cong \mathfrak{sl}(2,\RR) \ltimes \RR^3$.
\end{proof}

As a by-product of the proof  of Theorem~\ref{main} in the case of semisimple isotropy, we have obtained  the following more technical result:

\begin{proposition} \label{second}  Let~$g$ be a   real-analytic Lorentz metric  in a neighborhood of the origin in $\RR^3$. Suppose  that there exists a three-dimensional   subalgebra $\liel$  of the Killing Lie algebra acting transitively on an
open set admitting the origin in its closure, but not in the neighborhood of the origin.  If the isotropy at the origin is a one-parameter 
$\RR$-semisimple subgroup in $O(2,1)$, then 

\begin{enumerate}[(i)]
\item There exist local analytic coordinates $(x,h,z)$ in the neighborhood of the origin and real constants $C,D$ such that  
$$g = g_{C,D}= dx^2+ dhdz+Cz^2dh^2+Dzdxdh.$$

\item The algebra $\liel$ is solvable, and equals, in these coordinates, 
$$\mathfrak{l}=\langle \frac{\partial}{\partial x}, \frac{\partial}{\partial h}, - h \frac{\partial}{\partial h} + z  \frac{\partial}{\partial z} \rangle.$$
In particular, $\liel \cong \RR \oplus \mathfrak{aff}(\RR)$,  where $\mathfrak{aff}(\RR)$ is  the Lie algebra of the affine group of the real line. 

\item All the metrics $g_{C,D}$ are locally homogeneous.  They admit a Killing field $T \notin \liel$ of the form
$$ T = Dh \frac{\partial}{\partial x} + \frac{1}{2}(D^2 - C)h^2  \frac{\partial}{\partial h} + ((C-D^2) zh- 1)  \frac{\partial}{\partial z}.$$
The possible geometries on $(U,g_{C,D})$ are given by (i) - (v) of Proposition \ref{prop.ss.classification}. 
\end{enumerate}
\end{proposition}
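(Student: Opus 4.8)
The plan is to recognize that Proposition \ref{second} is essentially a repackaging of the analysis of Sections \ref{section2} and \ref{section3}, carried out this time for the given three-dimensional subalgebra $\liel$ rather than for the full Killing algebra of a quasihomogeneous metric. So the first thing I would do is isolate exactly which properties of the algebra the relevant lemmas actually use. Inspecting the proofs of Lemma \ref{3}, Lemma \ref{unimodular}, Lemma \ref{Killing}, and Lemma \ref{normal form}, one sees that they invoke only: (a) three-dimensionality of the algebra, (b) quasihomogeneity of its action, and (c) $\RR$-semisimplicity of the isotropy on the singular set. All three are hypotheses of Proposition \ref{second}. Accordingly I would set $S$ to be the complement of the maximal open set on which $\liel$ acts transitively; quasihomogeneity of $\liel$ makes $S$ a nonempty analytic subset of positive codimension through the origin, namely the zero locus of $v = vol_g(K_1,K_2,K_3)$ for a basis $(K_1,K_2,K_3)$ of $\liel$.

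For part (ii) I would first establish solvability of $\liel$ by the volume argument of Lemma \ref{3}. Since Killing fields preserve $vol_g$, the function $v$ satisfies $L_{K_i} v = \mathrm{tr}(\mathrm{ad}\, K_i)\, v$, so if $\liel$ were unimodular then $v$ would be a nonzero constant on each transitive orbit while vanishing on $S$, a contradiction. As a three-dimensional Lie algebra is semisimple or solvable and semisimple ones are unimodular, $\liel$ is solvable. Because the isotropy is one-parameter, hence one-dimensional, the evaluation map $ev(s)\colon \liel \to T_sU$ has rank two along $S$, so $S$ is a codimension-one submanifold on which $\liel$ acts transitively, exactly as in Lemma \ref{unimodular}(i); and the $\RR$-semisimple isotropy fixes a spacelike vector tangent to $S$, as in Lemma \ref{unimodular}(iii), which is precisely the starting datum of Section \ref{section3}.

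With these inputs in place, I would run the proofs of Lemma \ref{Killing} and Lemma \ref{normal form} verbatim with $\liel$ in the role of $\lieg$: they produce the isomorphism $\liel \cong \RR \oplus \mathfrak{aff}(\RR)$ together with adapted analytic coordinates $(x,h,z)$ in which $\liel = \langle \del{x}, \del{h}, -h\del{h} + z\del{z}\rangle$ and $g = g_{C,D}$, giving (i) and (ii). For (iii), Proposition \ref{loc hom} exhibits the explicit Killing field $T$ of the stated form with $T(0) = -\del{z} \notin \liel$, so the full Killing algebra of $g_{C,D}$ strictly contains $\liel$ and, by Lemma \ref{dim}(ii), the metric is locally homogeneous; the enumeration of geometries is then read off from Proposition \ref{prop.ss.classification}, whose proof already treats $\liel$ as a quasihomogeneous three-dimensional subalgebra of a strictly larger Killing algebra.

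The hard part will be the bookkeeping, not any new computation: I must verify that none of the cited arguments tacitly used $\liel$ to be the \emph{entire} Killing algebra of $g$. The delicate point is that the singular set $S$ and Nomizu's extension phenomenon were set up in Section \ref{section2} for the full algebra, whereas here $g$ may well be locally homogeneous; I would confirm that the structural ingredients actually used (the volume computation, the rank-two evaluation map, and the bracket relations of Lemma \ref{Killing}) depend only on $\liel$ itself. In particular I would note that the invariant-theoretic Proposition \ref{invariant theory}, which does exploit maximality, is \emph{not} needed in the semisimple case, since there the $\liel$-invariant vector field is spacelike rather than of norm $\leq 0$. Once this is checked, all three parts follow.
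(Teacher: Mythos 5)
Your proposal is correct and follows essentially the same route as the paper: Proposition~\ref{second} is presented there precisely as a by-product of Section~\ref{section3}, and the paper itself (in the proof of Proposition~\ref{prop.ss.classification}) re-reads the three-dimensional algebra of Lemmas~\ref{Killing} and~\ref{normal form} as a quasihomogeneous subalgebra $\liel$ of the full Killing algebra. Your bookkeeping --- that the volume argument, the rank-two evaluation map, and the bracket computations depend only on $\liel$, and that Proposition~\ref{invariant theory} is not needed in the semisimple case --- is exactly the (implicit) content of the paper's claim.
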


 \section{No quasihomogeneous Lorentz metrics with unipotent  isotropy}   \label{section4}
 
 We next treat the unipotent case of Lemma~\ref{unimodular}.  The following results can be found in~\cite{Dumitrescu} Propositions 3.4 and 3.5 in Section 3.1, where they are proved without making use of compactness.  See also \cite[Proposition 9.2]{Zeghib} for point (iii).  
  
 \begin{proposition} \label{surface}
\ \ 
\begin{enumerate}[(i)]
\item The surface $S$ is totally geodesic.
 
\item The Levi-Civita connection   $\nabla$ restricted to $S$ is either flat, or locally isomorphic to the canonical bi-invariant connection on the  affine group of the real line $\operatorname{Aff}$.
 
\item The restriction of the  Killing algebra $\mathfrak{g}$  to $S$  is isomorphic either to the Lie algebra of the  Heisenberg group in the flat case, or otherwise to a solvable  subalgebra $\mathfrak{sol}(1, a)$ of  $\operatorname{Aff} \times \operatorname{Aff}$, spanned  by the elements
 $(t,0), (0,t)$ and $(w, a w)$, where $t$ is the infinitesimal generator of the one-parameter group of translations, $w$ the infinitesimal generator of the one-parameter group of homotheties, and
 $a \in \RR$.
\end{enumerate}
 \end{proposition}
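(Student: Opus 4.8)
The plan is to work out the infinitesimal structure of $\lieg$ at a point $s \in S$ and read off all three conclusions from it. By Lemma~\ref{unimodular} (unipotent case) the isotropy $\mathfrak{i} = \RR Y$ fixes an isotropic vector $V \in T_sS$, with $T_sS = V^{\perp}$; choose an adapted null basis $(V,W,Z)$ of $T_sU$ with $V,Z$ isotropic, $g(W,W)=1$, $g(V,Z)=1$ and all other pairings zero, arranged so that the linearization $N$ of $Y$ at $s$ is the principal nilpotent $NZ=-W,\ NW=V,\ NV=0$. Lift $V,W$ to Killing fields $K_V,K_W \in \lieg$ with $K_V(s)=V$, $K_W(s)=W$, so that $(Y,K_V,K_W)$ is a basis of $\lieg$. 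The first step is to determine the brackets. Since for $X \in \mathfrak{i}$ one has $\mathrm{ev}(s)[X,K]=N(\mathrm{ev}(s)K)$, the quotient action of $\ad Y$ on $\lieg/\mathfrak{i}\cong V^\perp$ is nilpotent; after subtracting a multiple of $Y$ from $K_V$ we may normalize $[Y,K_W]=K_V$ and $[Y,K_V]=\nu Y$, and write $[K_V,K_W]=pK_V+qK_W+rY$.

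The key algebraic step is to impose the Jacobi identity together with solvability (Lemma~\ref{dim}(iii)). Jacobi applied to $(Y,K_V,K_W)$ forces $q=\nu$ and $p\nu=0$. If $\nu\neq 0$ then $\lieg'$ contains $Y$ (from $[Y,K_V]=\nu Y$), $K_V$ (from $[Y,K_W]$), and hence $K_W$, so $\lieg'=\lieg$, contradicting solvability; therefore $\nu=0$ and $q=0$. Thus the normalized relations are $[Y,K_V]=0$, $[Y,K_W]=K_V$, $[K_V,K_W]=pK_V+rY$, a two-parameter family. This computation simultaneously yields (iii): restricting to $S$ and distinguishing whether the derived algebra $\lieg'$ is one-dimensional and central (the parameter choice giving the nilpotent, flat case) or two-dimensional abelian containing a genuine $\mathfrak{aff}$-factor gives exactly the dichotomy between $\mathfrak{heis}$ and the solvable subalgebra $\mathfrak{sol}(1,a)\subset\operatorname{Aff}\times\operatorname{Aff}$; the value of $a$ is the ratio of eigenvalues of $\ad$ on the two-dimensional ideal.

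With the brackets in hand, (i) follows quickly. The transverse component of $\nabla_\cdot\cdot$ along $S$ (the second fundamental form $II$, valued in the line $T_sU/T_sS$) is $\mathfrak{i}$-invariant, and since $N$ acts trivially on $T_sU/T_sS$, invariance forces $II(V,V)=II(V,W)=0$; the only surviving component is $II(W,W)$. Using $\nabla_{K_W}K_W=-\tfrac12\operatorname{grad}|K_W|^2$ and skew-symmetry of $\nabla K_V$, one computes $g(\nabla_{K_W}K_W,V)_s=-g([K_V,K_W]_s,W)$, which detects precisely $II(W,W)$. Because $\nu=0$ gives $[K_V,K_W]_s=pV\in\RR V$, this pairing vanishes, so $II\equiv 0$ and $S$ is totally geodesic. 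For (ii), totally geodesy makes the induced torsion-free connection $\nabla|_S$ a well-defined $\lieg|_S$-invariant affine connection on the surface $S$; its curvature is an invariant tensor, and the same $N$-invariance argument (now applied to the induced degenerate metric of rank one) pins it to be either flat or the curvature of the bi-invariant connection of $\operatorname{Aff}$, matching the two Lie-algebra cases of (iii).

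The main obstacle is that $S$ carries a \emph{degenerate} induced metric and the homogeneous structure is non-reductive, so the classical tools for invariant connections on $G/H$ do not apply directly and the isotropy-invariance of $II$ alone does not kill its spacelike component. The essential input that circumvents this is solvability of $\lieg$: it is exactly what forces the bracket $[K_V,K_W]_s$ into the null line $\RR V$, and hence forces the one potentially nonzero component of the second fundamental form to vanish. After that, separating the flat/$\mathfrak{heis}$ case from the $\operatorname{Aff}$/$\mathfrak{sol}(1,a)$ case is a finite check on the remaining parameters $(p,r)$, carried out on the degenerate surface rather than on the ambient metric.
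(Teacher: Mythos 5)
Your treatment of part (i) is correct and essentially self-contained: the normalization $[Y,K_V]=\nu Y$, $[Y,K_W]=K_V$, $[K_V,K_W]=pK_V+qK_W+rY$, the Jacobi consequence $q=\nu$ and $p\nu=0$, the use of solvability (Lemma~\ref{dim}(iii)) to force $\nu=q=0$, and the identity $g(\nabla_{K_W}K_W,K_V)=-g([K_V,K_W],K_W)$ (which checks out via the Koszul formula for Killing fields) all hold, and together with the $\mathfrak{i}$-invariance of the second fundamental form valued in $T_sU/T_sS$ they do give $II\equiv 0$, hence $S$ totally geodesic. You also correctly identify solvability as the input that puts $[K_V,K_W]_s$ into $\RR V$. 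Note that the paper does not actually prove this proposition; it cites \cite{Dumitrescu}, Propositions 3.4 and 3.5. Your bracket analysis closely parallels the paper's independent computation in Proposition~\ref{solab} of Section~\ref{section5}, carried out there with the Cartan connection.

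There is, however, a genuine gap in (iii), and consequently in (ii). Your normalized relations leave the two-parameter family $[K_V,K_W]=pK_V+rY$, in which $\operatorname{ad}(-K_W)$ acts on the abelian ideal spanned by $Y$ and $K_V$ with characteristic polynomial $\lambda^2+p\lambda-r$. The dichotomy asserted in (iii) holds only when this operator is nilpotent ($p=r=0$, the Heisenberg case) or $\RR$-diagonalizable ($p^2+4r>0$, giving $\mathfrak{sol}(1,a)$). The cases $p^2+4r<0$ (complex eigenvalues, e.g.\ an $\mathfrak{e}(2)$-type algebra) and $p^2+4r=0$ with $p\neq0$ (a nontrivial Jordan block) yield Lie algebras that are neither $\mathfrak{heis}$ nor of the form $\mathfrak{sol}(1,a)\subset\operatorname{Aff}\times\operatorname{Aff}$, and nothing in your argument excludes them; the phrase ``two-dimensional abelian containing a genuine $\mathfrak{aff}$-factor'' assumes precisely what must be proved. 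This is not a cosmetic omission: the paper's Section~\ref{section5} arrives at exactly the same two-parameter family and explicitly invokes Proposition~\ref{surface}(iii) as the external input needed to conclude $\RR$-diagonalizability, so your route cannot close the circle by algebra alone. Excluding the elliptic and parabolic cases requires geometric information about the induced connection on $S$ --- in \cite{Dumitrescu} this is the substance of part (ii), the identification of $(S,\nabla|_S)$ as flat or as the bi-invariant connection on $\operatorname{Aff}$ --- and your one-sentence appeal to ``the same $N$-invariance argument'' does not supply that classification. Part (i) stands; parts (ii) and (iii) need a genuine argument relating the curvature of the induced connection on $S$ to the sign of $p^2+4r$.
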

 
Recall that, as $S$ has codimension one,  the restriction to $S$  of the  Killing Lie algebra $\mathfrak{g}$ of $g$ is an isomorphism.   The Heisenberg group and $\mathfrak{sol}(1, -1)$ are  unimodular, so by Lemma~\ref{3}, $\mathfrak{g}$ is isomorphic to $\mathfrak{sol}(1, a)$, with $a \neq -1,$ and {\it $S$ is non flat}.
 
 Recall that in dimension three, the curvature is completely determined by its Ricci tensor, which is a symmetric bilinear form.  The Ricci tensor is determined by the Ricci operator,
 which is a   field of $g$-symmetric endomorphisms $A : TU \to TU$ such that $\operatorname{Ricci}(u,v)=g(Au,v)$, for any tangent vectors $u,v$.

\begin{definition} The metric $g$ is said to be {\it curvature homogeneous} if for any pair of points $u,u' \in U$, there exists a linear isomorphism from $T_uU$ to $T_{u'}U$ preserving both $g$ and the curvature tensor.
\end{definition}

 In dimension three, it is equivalent  to assume in the previous definition  that these linear maps  preserve both $g$ and  the Ricci operator $A$.
 
 \begin{proposition}    
\label{eigenvalues} 
\ \ 
\begin{enumerate}[(i)]
 \item The only eigenvalue of the Ricci operator is $0$, everywhere on $U$.
 
\item The metric $g$ is curvature homogeneous; more precisely, in an  adapted framing on $U$, the Ricci operator  reads 
$$ A = \left(  \begin{array}{ccc}
                                                                 0    &    0   & \alpha \\
                                                                 0     &  0  &    0     \\
                                                                 0     &   0  &  0 \\
                                                                 \end{array} \right), \qquad \alpha \in \RR^*.
$$
\end{enumerate}                                                                
\end{proposition}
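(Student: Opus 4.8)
The plan is to exploit that local Killing fields preserve the full curvature tensor, hence the Ricci operator $A$, and to first pin down $A$ pointwise along $S$ using the isotropy. Fix $s \in S$. By Lemma~\ref{unimodular} the isotropy $\mathfrak{i}$ at $s$ is a one-parameter unipotent subgroup of $O(2,1)$, so in the standard representation on $T_sU$ its generator is a regular nilpotent $N$, i.e.\ a single Jordan block of size three. Since $\mathfrak{i}$ preserves $A(s)$, the endomorphism $A(s)$ commutes with $N$, and $A(s)$ is $g$-self-adjoint. The centralizer of a regular nilpotent is the algebra of polynomials $\langle I, N, N^2 \rangle$, among which only $I$ and $N^2$ are $g$-self-adjoint, since $N$ is $g$-skew. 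Hence $A(s) = aI + cN^2$ for some $a,c \in \RR$. In particular $A(s)$ has the single eigenvalue $a$, and $A(s) - aI = cN^2$ is a rank-one nilpotent whose image is the $\mathfrak{i}$-fixed isotropic line $\RR V \subset T_sS$.

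Next I would globalize. The operator $A$ is $\lieg$-invariant, and $\lieg$ acts transitively both on $S$ and on the open dense set $U \setminus S$; therefore the constants $a,c$ are the same at every point of $S$, the $O(2,1)$-conjugacy class of $A$ is constant on $U \setminus S$, and by analyticity the characteristic polynomial of $A$ is constant on all of $U$. Matching the value on the dense locus with the value $(\lambda - a)^3$ forced along $S$, we conclude that $a$ is the unique eigenvalue of $A$ everywhere. Once $c \neq 0$ is known, the pointwise normal form $aI + cN^2$ then persists in an adapted framing over all of $U$, which is precisely the curvature homogeneity asserted in (ii), since all such operators lie in a single $O(2,1)$-orbit.

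It remains to determine the two constants, and this is the main obstacle: the isotropy controls $A$ only \emph{tangentially} to $S$, where $N^2$ vanishes and $A$ acts as $a\,\mathrm{Id}$, so the eigenvalue cannot be read off from the $S$-directions alone and must be extracted transversally. The plan is to complete $V$ and the spacelike direction of $T_sS$ to a framing on a neighborhood of $S$ by propagating along the null geodesics issuing transversally from $S$, exactly as the geodesic field $Z$ is constructed in Lemma~\ref{normal form}. Using that $S$ is totally geodesic and degenerate by Proposition~\ref{surface}, so that $V$ spans the radical of $g|_S$ and $\nabla_X V \in \RR V$ for $X$ tangent to $S$, one computes the transverse components of $\nabla$ and evaluates the remaining Ricci component $\mathrm{Ric}(V,Z)$, which equals the eigenvalue $a$; carrying out this computation yields $a = 0$. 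Finally $c \neq 0$, for otherwise $A \equiv 0$, the three-dimensional metric is flat and hence locally homogeneous, contradicting quasihomogeneity. Writing $A = cN^2$ in the null frame adapted to $N$ gives the asserted matrix with $\alpha = -c \in \RR^*$, completing (i) and (ii).
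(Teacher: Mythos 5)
Your opening argument is sound and matches the paper's: the unipotent isotropy at $s\in S$ acts on $T_sU$ as a regular nilpotent $N$, its centralizer is $\langle I,N,N^2\rangle$, and $g$-symmetry kills the $N$ term, giving $A(s)=aI+cN^2$; constancy of the scalar invariants then forces the characteristic polynomial $(\lambda-a)^3$ everywhere. The two places where the proposal genuinely falls short are exactly the two places where the paper has to import outside input, namely the Calvaruso--Kowalski classification of Ricci operators of left-invariant Lorentz metrics on three-dimensional Lie groups. First, your determination of $a=0$ is not a proof but a deferral: you propose to build a transverse null-geodesic framing and "carry out" a computation of $\operatorname{Ric}(V,Z)$, asserting the answer is $0$. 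That computation requires knowing the metric to second order transversally to $S$, i.e.\ essentially a normal form for $g$ near $S$ in the unipotent case --- something neither you nor the paper establishes (the paper's Section 4 is structured precisely to avoid it). Knowing only that $S$ is totally geodesic and degenerate does not determine $\operatorname{Ric}(V,Z)$. The paper instead observes that on the open homogeneous set $g$ is a left-invariant metric on the nonunimodular group $SOL(1,a)$, and cites the classification to conclude that a triple Ricci eigenvalue there must vanish.

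Second, for part (ii) you claim that once $c\neq 0$ is known, "the pointwise normal form $aI+cN^2$ persists over all of $U$." But at points of $U\setminus S$ there is no isotropy, so the centralizer argument gives nothing there; all you know a priori is that $A$ is $g$-symmetric with triple eigenvalue $0$, and such operators can also be nilpotent of order three (Segre type $\{3\}$), e.g.\ $Ae=0,\ Ah=e,\ Af=h$ in a null frame is $g$-symmetric. A continuity or orbit-closure argument does not exclude this, since the rank-one order-two form on $S$ lies in the closure of the regular nilpotent orbit. The paper again uses Calvaruso--Kowalski to get that the Ricci operator on the homogeneous locus is nilpotent of order two, and only then concludes that $A$ lies in a single $O(2,1)$-orbit on $U\setminus S$ matching the form on $S$. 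As written, both the value of the eigenvalue and the global normal form rest on assertions rather than arguments.
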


\begin{proof}  (i)  Pick a point $s$ in $S$. The Ricci operator $A(s)$ must be invariant by the unipotent isotropy (which identifies with the stabilizer in the orthogonal group of $g(s)$ of an isotropic vector $X(s) \in T_sU$).

 The action of the isotropy  on  $T_{s} U$  fixes an isotropic vector $e_{1}=X(s)$ tangent to $S$  and so preserves the degenerate plane $e_{1}^{\bot} = T_sS$. 
  In order to define an adapted basis,   consider  two vectors $e_{2}, e_{3} \in T_{s} U$ such that
    $$g(e_{1}, e_{2})=0 \qquad g(e_{2},e_{2})=1 \qquad g(e_{3},e_{3})=0 \qquad g(e_{2},e_{3})=0 \qquad g(e_{3}, e_{1})=1$$ 
 
%  Note that such an adapted basis is uniquely determined by the choice of an unitary vector $e_{2} \in e_{1}^{\bot}$. Indeed, then $e_{3}$ is   uniquely defined in 
% $e_{2}^{\bot}$ by the relation  $g(e_{3}, e_{1})=1$ ( $e_{1}$ and  $e_{3}$ generate the two isotropic directions in $e_{2}^{\bot}$). 
% 
 The action on $T_{s} U$ of the one-parameter group of  isotropy is given in the basis $(e_{1}, e_{2}, e_{3})$ by  the matrix 
 $$ L_t= \left(  \begin{array}{ccc}
                                                                 1   &   t & - \frac{t^2}{2}\\
                                                                 0     &  1 &  -t\\
                                                                 0     &   0  &  1\\
                                                                 \end{array}  \right), \qquad t \in \RR.
$$

        First we show that   $A(s) : T_sU \to T_sU$   has,  in our adapted basis,  the following  form:                                                               
$$  \left(  \begin{array}{ccc}
                                                                 \lambda    &   \beta  & \alpha \\
                                                                 0     &  \lambda  &  - \beta  \\
                                                                 0     &   0  &  \lambda \\
                                                                 \end{array} \right), \qquad \alpha, \beta, \lambda \in \RR.$$
      Since $A(s)$ is invariant by the isotropy, it commutes with $L_{t}$ for all $t$. 
      Each eigenspace of $A(s)$ is  preserved by $L_{t}$, and eigenspaces of $L_t$ are preserved by $A(s)$.  As $L_{t}$ does not  preserve any non trivial splitting 
      of $T_sU$, it follows that all eigenvalues of $A(s)$ are equal to some $\lambda \in \RR$.  Moreover, the unique line and plane invariant by $L_t$ must also be invariant by $A(s)$, so $A(s)$ is upper-triangular in the basis $(e_1, e_2, e_3)$.  A straightforward calculation of the top corner entry of $A(s) L_t = L_t A(s)$ leads to the relation on the $\beta$ entries and thus to our claimed form for $A(s)$.

      Now the $g$-symmetry of $A(s)$ means $g(A(s)e_2,e_3)=g(e_2, A(s)e_3)$, which gives $\beta =0$.  Since the symmetric functions of the eigenvalues of $A$ are  scalar invariants, they must be constant on all of $U$. This implies that the only eigenvalue of $A$ is $\lambda$, on all of $U$.  It remains only to prove that $\lambda =0$. Consider an open set in $U$ on which the Killing algebra $\mathfrak{sol}(1, a)$ is transitive, so $g$ is locally isomorphic to a left-invariant  Lorentz metric
      on $SOL(1, a)$.  

The sectional and Ricci curvatures and Ricci operator of a left-invariant Lorentz metric on a given Lie group can be calculated, starting from the Koszul formula, in terms of the brackets between left-invariant vector fields forming an adapted framing of the metric.  In~\cite{CK} Calvaruso and Kowalski calculate Ricci operators for left-invariant  Lorentz metrics on three-dimensional Lie groups, assuming they are not symmetric (see also previous curvature calculations in \cite{nomizu_lorentz}, \cite{cordero_parker}, \cite{calv_einstein}).  If the metric on $U \backslash S$ were symmetric, then the covariant derivative of the curvature would vanish on all of $U$, which would imply $U$ locally symmetric, hence locally homogeneous; therefore, we need consider only nonsymmetric left-invariant metrics here.  A consequence of their Theorems 3.5, 3.6, and  3.7 is that the Ricci operator  of a left-invariant, nonsymmetric Lorentz metric on a {\it nonunimodular} three-dimensional Lie group admits a triple eigenvalue $\lambda$ if and only if $\lambda =0$, and the Ricci operator is nilpotent of order two.  We conclude $\lambda =0$, so  $A(s)$ has the form claimed.  Moreover, $A$ is nilpotent of order two on $U \setminus S$.
      
\smallskip
   (ii)     Because $\lieg$ acts transitively on $S$, there is an adapted framing along $S$ in which $A \equiv A(s)$.
 The parameter $\alpha$ in $A(s)$ cannot vanish; otherwise the curvature of $g$ vanishes on $S$ and 
      $(S, \nabla)$ is flat, which was proved to be impossible in Proposition~\ref{surface}.
      Now the Ricci operator on $S$ is nontrivial and lies  in the closure of the $PSL(2,\RR)$-orbit $\mathcal{O}$ of the Ricci operator  on $U \setminus S$.  But we know from (i) that on $U \backslash S$, the Ricci operator is $g$-symmetric and nilpotent of order 2, so it has the same form as $A(s)$, meaning it also belongs to the $PSL(2,\RR)$-orbit of      
       $$  \left(  \begin{array}{ccc}
                                                                 0    &    0   & 1 \\
                                                                 0     &  0  &    0     \\
                                                                 0     &   0  &  0 \\
                                                                 \end{array} \right).$$
%The isotropy of the Ricci operator is an one parameter unipotent  subgroup $H$  in $PSL(2,\RR)$ (which coincides on $S$ with the isotropy of $g$).  
 \end{proof}
Now $\mbox{Ricc}(u,u)$ is a quadratic form of rank one equal to $g(W,u)^2$, for some nonvanishing isotropic vector field $W$ on $U$, which coincides with $X$ on $S$.  
% and  $H$ is the stabilizer of $W$ in the structural group $PSL(2,\RR)$  of the orthonormal frame bundle $R(U)$.The reduction of the structural subgroup of the orthonormal frame bundle $R(U)$  to the unipotent subgroup $H$  and the vector field $W$  are   $\mathfrak{g}$-invariant. 
Invariance of $\operatorname{Ricci}$ by $\lieg$ implies invariance of $W$.  Proposition~\ref{invariant theory}  implies that $g$ is locally homogeneous.

\section{Alternate proofs using the Cartan connection} \label{section5}

The aim of this section is to give a second proof of Theorem~\ref{main} using the Cartan connection associated to a Lorentz metric.  The reader can find more details about the geometry of Cartan connections in the book~\cite{Sharpe}.  We still consider $g$ a Lorentz metric defined in a connected  open neighborhood $U$ of the origin in $\RR^3$.

\label{sec.cartan.connxn}

\subsection{Introduction to the Cartan connection}

 Let $\lieh = \mathfrak{o}(2,1) \ltimes \RR^{2,1}$.  Let $P = O(2,1) < O(2,1) \ltimes \RR^{2,1}$, so $\liep = \mathfrak{o}(2,1) \subset \lieh$.   Let $\pi : B \rightarrow U$ be the principal $P$-bundle of normalized frames on $U$, in which the Lorentz metric $g$ has the matrix form
$$ \mathbb{I} = 
\left(
\begin{array}{ccc}
  &   & 1 \\
   & 1   &  \\
1 &   & 
\end{array}
\right).
$$
(Note that $B$ is nearly the same as the bundle $R(U)$ from Section \ref{section2}, though it has been enlarged to allow all possible orientations and time orientations.)

The \emph{Cartan connection} associated to $(U,g)$ is the 1-form $\omega \in \Omega^1(B,\lieh)$ formed by the sum of the Levi-Civita connection of the metric $\nu \in \Omega^1(B,\liep)$ and the tautological 1-form $\theta \in \Omega^1(B, \RR^{2,1})$, defined by $\theta_b(v) = b^{-1}(\pi_* v)$.  The form $\omega$ satisfies the following axioms for a Cartan connection:
\begin{enumerate}
\item It gives a parallelization of $B$---that is, for all $b \in B$, the restriction $\omega_b : T_b B \rightarrow \lieh$ is an isomorphism.

\item It is $P$-equivariant: for all $p \in P$, the pullback $R_p^* \omega = \Ad p^{-1} \circ \omega$.

\item It recognizes fundamental vertical vector fields: for all $X \in \liep$, if $X^\ddag$ is the vertical vector field on $B$ generated by $X$, then $ \omega(X^\ddag) \equiv X$.
\end{enumerate}

 The \emph{Cartan curvature} of $\omega$ is
$$ K (X,Y) = \mbox{d} \omega(X,Y) + [\omega(X),\omega(Y)].$$
This 2-form is always semibasic, meaning $K_b(X,Y)$ only depends on the projections of $X$ and $Y$ to $T_{\pi(b)}U$; in particular, $K$ vanishes when either input is a vertical vector.  We will therefore express the inputs to $K_b$ as tangent vectors at $\pi(b)$.  Torsion-freeness of the Levi-Civita connection implies that $K$ has values in $\liep$.  Thus $K$ is related to the usual Riemannian curvature tensor $R \in \Omega^2(U) \otimes \mbox{End}(TM)$ by 
$$ b \circ R_{\pi(b)} (u, v) \circ b^{-1}  = K_b(u,v).$$
The benefit here of working with the Cartan curvature is that, when applied to Killing vector fields, it gives a precise relation between the brackets on the manifold $U$ and the brackets in the Killing algebra $\lieg$. 

The $P$-equivariance of $\omega$ leads to $P$-equivariance of $K$: $(R_p^* K)(X,Y) = (\Ad p^{-1})(K(X,Y))$.  The infinitesimal version of this statement is, for $A \in \liep$,
$$ K([A^\ddag,X],Y) + K(X,[A^\ddag,Y]) =  [K(X,Y), A].$$

A Killing field $Y$ on $U$ lifts to a vector field on $B$, which we will also denote $Y$, with $L_Y \omega= 0$.  Note that also $L_Y K = 0$ in this case.  Thus if $X$ and $Y$ are Killing fields, then
$$ X. (\omega (Y)) = \omega[X,Y] \qquad \mbox{and} \qquad Y.(\omega(X)) = \omega[Y,X].$$
In this case,
\begin{eqnarray*}
K(X,Y) & = & X.(\omega(Y)) - Y. (\omega(X)) - \omega[X,Y] + [\omega(X),\omega(Y)] \\
& = & \omega[X,Y] - \omega[Y,X] - \omega[X,Y] + [\omega(X),\omega(Y)] \\
& = & \omega[X,Y] + [\omega(X),\omega(Y)]
\end{eqnarray*}
so, when $X$ and $Y$ are Killing, then
\begin{eqnarray}
\label{eqn.curv.killing}
\omega[X,Y] = [\omega(Y),\omega(X)] + K(X,Y). 
\end{eqnarray}

Via the parallelization given by $\omega$, the semibasic, $\liep$-valued 2-form $K$ corresponds to a $P$-equivariant, automorphism-invariant function 
$$ \kappa : B \rightarrow \wedge^2 \RR^{2,1*} \otimes \liep.$$
The $P$-representation on the target vector space is associated naturally to the adjoint representation of $G$ restricted to $P$, and will be denoted $g \cdot \kappa(b)$, for $g \in P$ and $b \in B$.  We will use the same notation below for other $P$-represenations associated to the adjoint, and also for the corresponding Lie algebra representations---for example, $X \cdot \kappa(b)$ for $X \in \liep$.

\subsection{Curvature representation}
\label{subsection.curv.rep}

Denote $(e,h,f )$ a basis of $\RR^{2,1}$ in which the inner product is given by $\mathbb{I}$. 
%$$ \mathbb{I} = 
%\left(
%\begin{array}{ccc}
%  &   &   1 \\
%  & 1 &   \\
%  1 & & 
%  \end{array}
%  \right)
%$$
Let $E,H,F$ be generators of $\liep$ with matrix expression in the basis $(e,h,f )$
\begin{eqnarray*}
E = 
\left(
\begin{array}{ccc}
0  & - 1  &   \\
  & 0   & 1 \\
  &    & 0
  \end{array}
  \right)
   \qquad 
  H = 
  \left(
  \begin{array}{ccc}
  1 &  & \\
     & 0 &  \\
     &   & -1
     \end{array}
     \right)
        \qquad 
     F = 
     \left( 
     \begin{array}{ccc}
     0  &   &   \\
       - 1  & 0  & \\
       &   1 & 0
       \end{array}
       \right).
              \end{eqnarray*}
Therefore this representation of $\liep$ is equivalent to $\mbox{ad } \liep$ via the isomorphism sending $(e,h,f )$ to $( E, H, F)$.

%Denote by $\{ e^*, h^*, f^* \}$ the dual basis of $\{ e,h,f \}$, determined by the property that the row vectors for $e^*, h^*,$ and $f^*$ in terms of the basis $\{ e,h, f \}$ form the identity matrix.  Thus $e^*(x) = \langle f,x \rangle$, $h^*(x) = \langle h,x \rangle$, and $f^*(x) = \langle e,x \rangle$.  
Denote by $\ ^*$ the isomorphism $\RR^{2,1} \rightarrow \RR^{2,1*}$ with $w^*(u) = \langle w,u \rangle$.  Note that for $p \in O(2,1)$ and $x \in \RR^{2,1}$, we have $(px)^* = p^*x^*$ for the dual represention $p^*x^* = x^* \circ p^{-1}$.

%% Now $E \mapsto e$,  $H \mapsto h$,  $F \mapsto f$ defines an $\mathfrak{o}(2,1)$-equivariant isomorphism from $\mathfrak{o}(2,1)$ to $\RR^{2,1}$---that is, for each $X \in \mathfrak{o}(2,1)$, the matrix of $\mbox{ad }X$ in the basis $E,H,F$ equals the matrix of $X$ in the basis $e,h,f$.

%% Next consider the volume form $vol = e^* \wedge h^* \wedge f^*$, which is invariant by the dual action of $\mbox{SO}(2,1)$.   Then define an isomorphism $\tau : \wedge^2 \RR^{2,1*} \rightarrow \RR^{2,1*}$ by 

%% $$vol = \mu \wedge \langle \tau(\mu)^*, \  \rangle$$

%% It is straightforward to see that this map is an equivariant linear isomorphism: $\tau(p^* \mu) = p^* \tau(\mu)$ for all $p \in \mbox{SO}(2,1)$.  In terms of a basis, 
%% \begin{eqnarray*}
%% \tau & : & e^* \wedge h^* \mapsto e^* \\
%%  &  & f^* \wedge e^* \mapsto  h^* \\
%%  &   & h^* \wedge f^* \mapsto f^*
%%  \end{eqnarray*}

% The tensor product of these isomorphisms $\wedge^2 \RR^{2,1*} \rightarrow \RR^{2,1*}$ and $\mathfrak{o}(2,1) \rightarrow \RR^{2,1}$ gives

Next we define an $O(2,1)$-equivariant homomorphism $\varphi : \wedge^2 \RR^{2,1*} \otimes \mathfrak{o}(2,1)  \rightarrow \RR^{3*} \otimes \RR^3$, where the representation on $\mbox{End } \RR^3$ is by conjugation.
%(We will also denote the $P$ and $\liep$ representations on this latter space by $g \cdot$ and $X \cdot$, respectively.)
% In fact, $\wedge^2 \RR^{2,1*} \otimes \mathfrak{o}(2,1)$ is isomorphic as an $\mbox{SO}(2,1)$-module to $\RR^{3 \times 3}$.
Define $\varphi$ on simple tensors by   
$$ \varphi(v^* \wedge w^* \otimes X) =  (Xv)^* \otimes w - (Xw)^* \otimes v  =  (w^* \circ X) \otimes v - (v^* \circ X) \otimes w. $$
Equivariance is easy to check.  When the input lies in the submodule $\mathbb{W}$ satisfying the Bianchi identity, then the output is $\mathbb{I}$-symmetric (see \cite{Sharpe}, Section 6, Proposition 1.4 (ii)(c)).  The Ricci endomorphism $A$, defined in terms of the curvature tensor by 
$$ \langle A_xv, w \rangle = \mbox{tr } R_x(v, \cdot)w = \mbox{Ricci}_x(v,w), \qquad \forall v,w \in T_xM$$
corresponds via $\omega$ to the function $\varphi \circ \kappa$. Recall that in dimension 3, the curvature tensor is determined by the Ricci curvature, so $\varphi$ restricted to $\mathbb{W}$ is actually an isomorphism onto its image.

This image is the sum $E_0 \oplus E_2$  of two irreducible components of the $O(2,1)$-representation on $\mbox{End } \RR^3$.  The first, denoted $E_0$, is the one-dimensional trivial representation, generated by the identity on $\RR^3$, which we will denote $m_d$.  Another irreducible component $E_1$ corresponds to endomorphisms in $\mathfrak{o}(2,1)$, 
% A basis for $E_1$ corresponding to the matrices $E,H,$ and $F$ above will be denoted $\{ m_U, m_A, m_V \}$.  
%Now consider the polar decomposition of $\mbox{End } \RR^3$ with respect to the Lorentzian inner product.  The subspace $E_1$ comprises endomorphisms $X$ 
which satisfy $X {\mathbb I} = - {\mathbb I} X^t$.  The $O(2,1)$-invariant complementary subspace, consisting of the $\mathbb{I}$-symmetric endomorphisms, splits into $E_0$ and the last irreducible component, $E_2$, which is five-dimensional.  The component $E_0$ captures the scalar curvature, while $E_2$ corresponds to the tracefree Ricci endomorphism. 
%more precisely, our realization of $E_0 \oplus E_2$ as a representation contained in $\mbox{End } \RR^3$ gives Ricci endomorphisms. , so we will focus below on the components of the curvature in $E_0 \oplus E_2$.  

In the second column of the following table, we list a basis for $E_0 \oplus E_2$, with notation for each element in the first column, and the elements of $\mathbb{W} \subset \wedge^2 \RR^{2,1*} \otimes \mathfrak{o}(2,1)$ mapping to them under $\varphi$ in the third column.  Note that the elements in the last column span the space of all possible values of $\kappa$.

\bigskip

\begin{tabular}{|c | c | c |}
\hline
  & $\RR^{3\times 3}$  & $\mathbb{W} \subset \wedge^2 \RR^{2,1*} \otimes \mathfrak{o}(2,1)$  \\
  \hline
$m_d$ & $ 2(f^* \otimes e + h^* \otimes h + e^* \otimes f)$ &  $h^* \wedge e^* \otimes F + e^* \wedge f^* \otimes H + f^* \wedge h^* \otimes E$ \\
%  $m_E$  & $- h^* \otimes e + f^* \otimes h$ & $e^* \wedge f^* \otimes E + h^* \wedge f^* \otimes H$ \\
 % $m_H$ & $e^* \otimes e - f^* \otimes f$ & $e^* \wedge h^* \otimes E + f^* \wedge h^* \otimes F$  \\
% $m_F$ & $- e^* \otimes h + h^* \otimes f$ & $h^* \wedge e^* \otimes H + f^* \wedge e^* \otimes F$ \\
 $m_{e^2}$ & $ e^* \otimes e$ & $e^* \wedge h^* \otimes E$ \\
 $m_{eh}$ & $h^* \otimes e + e^* \otimes h$ & $f^* \wedge e^* \otimes E + f^* \wedge h^* \otimes H$ \\
 $m_{2h^2-ef}$ & $2 h^* \otimes h - f^* \otimes e -  e^* \otimes f$ & $2 f^* \wedge e^* \otimes H + f^* \wedge h^* \otimes E + h^* \wedge e^* \otimes F$ \\ 
$m_{hf}$ & $f^* \otimes h + h^* \otimes f$ & $h^* \wedge f^* \otimes H + f^* \wedge e^* \otimes F$  \\
$m_{f^2}$ & $f^* \otimes f$ & $h^* \wedge f^* \otimes F$ \\
\hline
\end{tabular}

\bigskip

% The $U$-invariant elements here all lie in the span of $m_d$ and $m_{u^2}$, corresponding to the parameters $\gamma$ and $\alpha$ in our current writeup.  The $U$-invariant elements of the full representation $E$ comprise the span of $m_d$, $m_U$ and $m_{u^2}$.  The tangent space to $S$ evaluates in this framing to span$\{ u,a \}$.  Both $m_U$ and $m_{u^2}$ vanish on this subspace.  Therefore, our surface with unipotent isotropy is flat if and only if $\gamma = 0$.

Assume now that $g$ is quasihomogeneous.
Recall  that, by the results in Section~\ref{section2}, the Killing algebra $\mathfrak{g}$ is three-dimensional. It acts transitively on $U$, away from a two-dimensional, degenerate submanifold $S$ passing through the  origin.  Moreover, $\mathfrak{g}$ acts transitively on $S$ and  the isotropy at points of $S$ is conjugated to a one-parameter semisimple group or to a one-parameter unipotent group in $PSL(2, \RR)$.  We will study the interaction of $\lieg$, $\omega (\lieg)$, and $\kappa$, both on and off $S$.

\subsection{Semisimple isotropy}

Let $b_0$ be a point of $B$ lying over the origin and assume that the isotropy action of $\lieg$ at $0$ is semisimple, as in Section \ref{section3}.  A semisimple element of $\liep$ is conjugate in $P$ into $\RR H$, so up to changing the choice of $b_0 \in \pi^{-1}(0)$, we may assume that $\omega_{b_0} (\lieg) \cap \liep$ is spanned by $H$.  

\begin{proposition}(compare Lemma \ref{Killing} ($i$))
\label{prop.aff.plus.r}
If  the isotropy of $\lieg$ at the origin is semisimple, then $\lieg \cong \RR \oplus \mathfrak{aff}(\RR)$.
\end{proposition}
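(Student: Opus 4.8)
The plan is to reconstruct the bracket relations of $\lieg$ at the level of the Cartan algebra $\lieh$, using the curvature identity \eqref{eqn.curv.killing} at the basepoint $b_0$. We have already arranged that $\omega_{b_0}(\lieg) \cap \liep = \RR H$, so the isotropy is generated by a Killing field $Y$ with $\omega_{b_0}(Y) = H$. Since $\lieg$ is three-dimensional and acts transitively on $U$ away from $S$, the image $\omega_{b_0}(\lieg)$ is a three-dimensional subspace of $\lieh = \liep \ltimes \RR^{2,1}$ meeting $\liep$ exactly in $\RR H$; thus we may pick Killing fields $X', Z$ whose $\omega_{b_0}$-images project to a basis of $\RR^{2,1}$ and which, together with $Y$, span $\lieg$. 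Because $\ad H$ acts on $\RR^{2,1}$ with eigenvalues $+1,0,-1$ on $e,h,f$ respectively, I would normalize $X'$ and $Z$ so that $\omega_{b_0}(X') = e$ (the $+1$ weight) and $\omega_{b_0}(Z) = f$ (the $-1$ weight), after checking that the $h$-direction (the isotropy's fixed spacelike vector) is absorbed into the appropriate combination.

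The key step is to feed these into \eqref{eqn.curv.killing}. For the bracket $[Y, X']$, the formula gives $\omega_{b_0}[Y,X'] = [\omega(X'),\omega(Y)] + K(Y,X') = [e,H] + K(Y,X')$. Since $K$ is semibasic and $Y$ is vertical at $b_0$ (as $Y(0)=0$ means its projection to $T_0U$ vanishes), the curvature term $K(Y,X')$ vanishes, leaving $\omega_{b_0}[Y,X'] = [e,H] = e = \omega_{b_0}(X')$ up to sign from the bracket convention in $\lieh$. Similarly $\omega_{b_0}[Y,Z] = [f,H] = -f$, reflecting the $-1$ weight. These recover the $\ad Y$ eigenvalues $+1$ and $-1$ and identify the $\mathfrak{aff}(\RR)$ structure: $Y$ plays the role of the homothety generator acting with eigenvalue $\pm 1$ on two transverse directions. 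The remaining bracket $[X',Z]$ is computed the same way: $\omega_{b_0}[X',Z] = [\omega(Z),\omega(X')] + K(X',Z) = [f,e] + K(X',Z)$, where $[f,e] \in \liep$ and $K(X',Z)$ is the genuine curvature contribution lying in $\liep$.

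The main obstacle will be controlling this last bracket, $[X',Z]$, to show that $\lieg$ splits as $\RR \oplus \mathfrak{aff}(\RR)$ rather than, say, $\mathfrak{sl}(2,\RR)$ or an unsplit extension. The issue is that $[f,e]$ contributes a nonzero $\liep$-component, and I must show the full bracket $\omega_{b_0}[X',Z]$ lands back inside the three-dimensional space $\omega_{b_0}(\lieg)$, which forces a constraint on $K(X',Z)$. I expect to use the curvature representation theory of Section \ref{subsection.curv.rep}: the isotropy invariance of $\kappa(b_0)$ under $\ad H$ restricts $\kappa(b_0)$ to the $H$-invariant (weight-zero) vectors in $\mathbb{W}$, which are spanned by $m_d$ and $m_{2h^2-ef}$ in the table. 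Evaluating $K(X',Z)$ against these invariant components and imposing that the bracket closes in $\lieg$ should pin down enough of the structure constants to conclude that one generator (a suitable central combination $X'$) commutes with everything, giving the $\RR$ factor, while $Y,Z$ and a corrected $X'$ close up into $\mathfrak{aff}(\RR)$. This parallels the computation in Lemma \ref{Killing}(i), but now carried out intrinsically on $\lieh$ rather than in adapted coordinates, which is precisely the advantage the Cartan formalism is meant to provide.
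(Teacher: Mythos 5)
Your overall strategy---evaluate $\omega_{b_0}$ on a basis of $\lieg$, use equation (\ref{eqn.curv.killing}) together with semibasicness of $K$ to read off the brackets with the isotropy generator $Y$, and then constrain $K_{b_0}(X',Z)$ by $H$-invariance---is exactly the paper's strategy. But there is a genuine error at the starting point. Since $\omega_{b_0}$ is injective on $\lieg$ and $\omega_{b_0}(\lieg)\cap\liep=\RR H$, the projection $\overline{\omega_{b_0}(\lieg)}$ to $\RR^{2,1}$ is only \emph{two}-dimensional: it is $b_0^{-1}(T_0S)$, the tangent space to the two-dimensional orbit $S$ of the origin. Two Killing fields cannot ``project to a basis of $\RR^{2,1}$,'' and the $h$-direction cannot be ``absorbed'': $h$ is precisely the image of the $\lieg$-invariant spacelike field tangent to $S$, and it must lie in $\overline{\omega_{b_0}(\lieg)}$. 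Moreover, because $S$ is degenerate (Lemma \ref{unimodular}(iii)), the only admissible $H$-invariant planes are $\mbox{span}\{e,h\}$ and $\mbox{span}\{h,f\}$; your choice $\mbox{span}\{e,f\}$ is the nondegenerate Lorentz plane and is excluded. The paper therefore normalizes $\omega_{b_0}(X)=h+\alpha E+\beta F$ and $\omega_{b_0}(Z)=e+\gamma E+\delta F$, kills $\alpha,\beta,\delta$ using $[Y,\cdot]$ and semibasicness of $K$, gets $[Y,X]=0$, $[Y,Z]=-Z$, then $K_{b_0}(X,Z)=rE$ with $r=-\gamma^2$ and $[X,Z]=-\gamma Z$; the central $\RR$ factor is spanned by $\gamma Y-X$, i.e.\ it comes from the weight-zero direction $h$ that your normalization discards.

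If you push your normalization through anyway, it self-destructs in an instructive way. First, $[f,e]=0$ in $\lieh=\mathfrak{o}(2,1)\ltimes\RR^{2,1}$ (the translation part is an abelian ideal), not an element of $\liep$ as you wrote; so $\omega_{b_0}[X',Z]=K_{b_0}(X',Z)\in\liep$. With $[Y,X']=X'$ and $[Y,Z]=-Z$, infinitesimal $Y$-invariance of $K$ forces $K_{b_0}(X',Z)$ to commute with $H$, hence $K_{b_0}(X',Z)\in\RR H$ and $[X',Z]\in\RR Y$. Every Lie algebra with these structure constants ($\mathfrak{sl}(2,\RR)$ if the coefficient is nonzero, $\mathfrak{sol}(1,-1)$ if it vanishes) is unimodular, contradicting Lemma \ref{3}. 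So the missing idea is not a refinement of your curvature computation but the geometric input that the orbit of the origin is the degenerate surface $S$, which dictates $\overline{\omega_{b_0}(\lieg)}=\mbox{span}\{e,h\}$ and is ultimately responsible for the $\RR\oplus\mathfrak{aff}(\RR)$ structure.
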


\begin{proof}
Let $Y \in \lieg$ have $\omega_{b_0} (Y) = H$, so the corresponding Killing field vanishes at the origin.  The projection $\overline{\omega_{b_0}(\lieg)}$ of $\omega_{b_0}(\lieg)$ to $\RR^{2,1}$ is 2-dimensional, degenerate, and $H$-invariant.  Again, by changing the point $b_0$ in the fiber above the origin, we may conjugate by an element normalizing $\RR H$ so that this projection is span$\{ e,h \}$.  Therefore, there is a basis $(X,Y,Z )$ of $\lieg$ such that

$$ \omega_{b_0}(X) = h + \alpha E + \beta F \qquad \mbox{and} \qquad \omega_{b_0} (Z) =  e + \gamma E + \delta F$$
for some $\alpha, \beta, \gamma, \delta \in \RR$.  Because $K_{b_0} (Y, \cdot ) = 0$, equation (\ref{eqn.curv.killing})   gives
$$ \omega_{b_0} [Y,X] = [h + \alpha E + \beta F, H] = - \alpha E + \beta F \in \omega_{b_0} (\lieg)$$
so $\alpha = \beta = 0$ and $[Y,X] = 0$.  A similar computation gives 
$$\omega_{b_0} [Y,Z] =  [ e + \gamma E + \delta F,H] = - e - \gamma E + \delta F$$ 
so $\delta = 0 $, and $[Y,Z] = - Z$.

Infinitesimal invariance of $K$ by $Y$ gives
$$ K_{b_0}([Y,X],Z) + K_{b_0}(X,[Y,Z]) = Y.( K(X,Z))_{b_0} = H^\ddag. (K(X,Z))_{b_0} = [- H, K_{b_0}(X,Z)],$$
which reduces to $ K_{b_0}(X, Z) = [H, K_{b_0}(X, Z)].$
Since $K$ takes values in $\mathfrak{p}$, where $\mbox{ker}(\ad H - \mbox{Id}) = \RR E$, we get 
$$ K_{b_0}(X, Z)= \kappa_{b_0} (h,e) =  r E \qquad \mbox{for some }  r \in \RR.$$

Now equation (\ref{eqn.curv.killing}) gives for $X$ and $Z$,
\begin{eqnarray*}
\omega_{b_0} [X,Z] & = & [ e + \gamma E, h]  + rE  \\
& = & - \gamma e + rE.
\end{eqnarray*}
In order that this element belong to $\omega_{b_0}(\lieg) = \mbox{span} \{ H, h, e + \gamma E \}$, we must have $r = - \gamma^2$, and $\lbrack X, Z \rbrack =- \gamma Z$.

 The structure of the algebra $\lieg$ in the basis $(X, Y, Z )$ is
\begin{eqnarray*}
\mbox{ad } Y = 
\left( 
\begin{array}{ccc}
0 &  &   \\
   &    0 &  \\
   &   & -1
   \end{array}
      \right)
   \qquad
   \mbox{ad } X = 
   \left(
   \begin{array}{ccc}
   0 &   &  \\
     & 0 &   \\
     &   & - \gamma 
     \end{array}
     \right)
     \qquad
     \mbox{ad } Z = 
     \left(
     \begin{array}{ccc}
    0 &  & \\
      & 0 &  \\
    \gamma  &  1 & 0
      \end{array}
      \right).
      \end{eqnarray*}
This $\lieg$ is isomorphic to $\mathfrak{aff}(\RR) \oplus \RR$, with center generated by $\gamma Y - X.$ 
\end{proof}

 Let $W = X- \gamma Y$.   Note that $W(0)$ has norm 1 because $\overline{\omega_{b_0} (W) }= h$.  As in Section \ref{section3}, where the central element of $\lieg$ is called $X'$, the norm of $W$ is constant 1 on $U$ because it is $\lieg$-invariant and equals $1$ at a point of $S$.  Existence of a Killing field of constant norm 1 has the following consequences for the geometry of $U$:

\begin{proposition} 
\ \ 
\begin{enumerate}[(i)]
\item  The local $\lieg$-action on $U$ preserves a splitting of $TU$ into three line bundles, $L^- \oplus \RR W \oplus L^+$, with $L^-$ and ${L}^+$ isotropic.

\item The distributions $L^- \oplus \RR W$ and $L^+\oplus \RR W$ are each tangent to $\lieg$-invariant, degenerate, totally geodesic foliations $\mathcal{P}^-$ and $\mathcal{P}^+$, respectively; moreover, the surface $S$ is a leaf of one of these foliations, which we may assume is $\mathcal{P}^+$. 
\end{enumerate}
\end{proposition}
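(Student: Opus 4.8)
The plan is to derive everything from two facts about the central Killing field $W = X - \gamma Y$: it is $\lieg$-invariant, and it has constant $g$-norm $1$. For part (i), since $W$ is spacelike and $\lieg$-invariant, its orthogonal complement $W^{\perp}$ is a $\lieg$-invariant plane field of Lorentzian signature, hence carries exactly two isotropic line fields, the candidates for $L^{-}$ and $L^{+}$. The $\lieg$-action manifestly preserves the unordered pair $\{L^-, L^+\}$, so the real point is that it preserves each line field separately. I would settle this using orientation and time orientation: fixing an orientation on $U$ (possible since $U$ is simply connected) and a time orientation, the local Killing fields preserve both, as recalled in Section~\ref{section2}. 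Because $W$ is a global unit section, $W^{\perp}$ inherits an orientation and a time orientation, and on an oriented, time-oriented Lorentzian plane bundle the two isotropic line fields are canonically distinguished. Hence the Killing flows fix $L^{-}$ and $L^{+}$ individually, giving the invariant splitting $TU = L^{-} \oplus \RR W \oplus L^{+}$.

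For part (ii), the engine is the skew-symmetric operator $\Phi : V \mapsto \nabla_V W$. Since $W$ is Killing, $\Phi$ takes values in $\mathfrak{o}(2,1)$ pointwise; since $|W|$ is constant, $\Phi(W) \perp W$, and a one-line computation from skew-symmetry gives $\nabla_W W = 0$, so $W$ is geodesic and $\Phi$ vanishes on $W$ and maps into $W^{\perp}$. A skew operator on the Lorentzian plane $W^{\perp}$ annihilating $W$ acts as a boost, whose eigenlines are exactly the isotropic lines $L^{\pm}$; thus $\nabla_{N^{\pm}} W = \pm c\, N^{\pm}$ for null generators $N^{\pm}$ of $L^{\pm}$ and some function $c$ (the case $c=0$ being harmless). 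From $\langle N^{\pm}, N^{\pm}\rangle = 0$ together with metric compatibility one then reads off that each $L^{\pm}$ is geodesic, i.e. $\nabla_{N^{\pm}} N^{\pm} \in L^{\pm}$.

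I would then set $D^{\pm} = L^{\pm} \oplus \RR W = (L^{\pm})^{\perp}$ and verify auto-parallelism directly: for $V, V' \in \{ N^{+}, W\}$ the four covariant derivatives $\nabla_W W = 0$, $\nabla_{N^{+}} N^{+} \in L^{+}$, $\nabla_{N^{+}} W = c N^{+}$, and $\nabla_W N^{+}$ all lie in $D^{+}$, the last because its pairings with $W$ and with $N^{+}$ vanish by metric compatibility. Auto-parallelism yields both integrability (via $[V,V'] = \nabla_V V' - \nabla_{V'} V$) and the totally geodesic property, defining $\mathcal{P}^{+}$, and symmetrically $\mathcal{P}^{-}$. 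Degeneracy is immediate since $L^{\pm}$ is the radical of $g|_{D^{\pm}}$. To identify $S$ as a leaf, I would use the frame data at $b_0$: there $W$ corresponds to $h$, tangent to $S$, and exactly one of $L^{\pm}(0)$, say $L^{+}(0) = \RR e$, lies in $T_0 S = \overline{\omega_{b_0}(\lieg)}$; relabeling so that this is $\mathcal{P}^{+}$ gives $D^{+}(0) = T_0 S$, and $\lieg$-invariance of $D^{+}$ together with transitivity of $\lieg$ on $S$ propagates $T_s S = D^{+}(s)$ along $S$, so $S$ is an integral leaf of $\mathcal{P}^{+}$.

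The computations in the last two paragraphs are short, so the main subtlety lies in the global step of part (i): upgrading invariance of the unordered pair of isotropic line fields to invariance of each. This is a $\ZZ/2$-monodromy issue, and the clean way around it is precisely the orientation and time-orientation argument above; without that observation one would have to rule out by hand a Killing flow interchanging $L^{+}$ and $L^{-}$.
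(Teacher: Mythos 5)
Your proposal is correct and follows essentially the same route as the paper: both parts rest on $W$ being a central, $\lieg$-invariant Killing field of constant norm $1$, with (i) obtained from the splitting of the Lorentzian plane bundle $W^\perp$ into its two isotropic lines and (ii) from metric-compatibility computations showing $L^\pm\oplus\RR W=(L^\pm)^\perp$ is auto-parallel, then identifying $TS$ with $(L^+)^\perp$ at a point and propagating by transitivity on $S$. The only differences are cosmetic: you resolve the possible $\ZZ/2$ swap of $L^\pm$ via orientation and time orientation where the paper implicitly relies on the Killing flows being connected to the identity, and you organize the totally geodesic computation around the skew operator $V\mapsto\nabla_V W$ rather than the paper's choice of sections $V^\pm$ commuting with $W$ at a point.
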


\begin{proof}
(i)  Because $\lieg$ preserves $W$, it preserves $W^\perp$, which is a 2-dimensional Lorentz distribution.  A 2-dimensional Lorentz vector space splits into two isotropic lines preserved by all linear isometries.  Therefore $W^\perp = {L}^- \oplus {L}^+$, with both line bundles isotropic and  $\lieg$-invariant.

\smallskip

(ii)  Because the flow along $W$ preserves ${L}^-$ and ${L}^+$, the distributions  ${L}^- \oplus \RR W$ and ${L}^+ \oplus \RR W$ are involutive, and thus they each integrate to foliations $\mathcal{P}^-$ and $\mathcal{P}^+$ by degenerate surfaces.

Let $x \in U$.  Let $V^- \in \Gamma(L^-)$ and  $V^+ \in \Gamma(L^+)$ be vector fields with $V^\pm(x) \neq 0$ and $[W,V^\pm](x) = 0$.
% Let $x \in U \backslash S$, so that $\lieg(x) = T_x U$.  Choose nonzero $v^- \in {L}^-(x)$ and $v^+ \in {L}^+(x)$.  
%Because $\lieg$ acts simply transitively on a neighborhood of $x$ preserving both line bundles, these vectors can be translated by flows from $\lieg$ to yield vector fields $V^- \in \Gamma(L^-)$ and  $V^+ \in \Gamma(L^+)$ on a neighborhood of $x$.  Because $W$ is central in $\lieg$, the brackets $[W,V^-]$ and $[W,V^+]$ vanish.  
It is well known that a Killing field of constant norm is geodesic: $\nabla_W W = 0$.  Moreover,  because $g(V^\pm , V^\pm)$ is constant zero, $W. (g(V^\pm, V^\pm)) = V^\pm. (g( V^{\pm}, V^\pm)) = 0$, from which
$$ g_x(\nabla_W V^{\pm}, V^\pm) = g_x(\nabla_{V^\pm} W ,V^\pm) = g_x(\nabla_{V^\pm} V^\pm, V^\pm) = 0.$$
The tangent distributions $T\mathcal{P}^\pm$ equal $(V^\pm)^\perp$, and it is now straightforward to verify from the axioms for $\nabla$ that $\mathcal{P}^-$ and $\mathcal{P}^+$ are totally geodesic through $x$.   

The Killing field $W$ is tangent to the surface $S$.  Because $S$ is degenerate, $TS^\perp$ is an isotropic line of $W^\perp$ and therefore coincides with $L^+$ or $L^-$.  We can assume it is $L^+$, so $S$ is a leaf of $\mathcal{P}^+$; in particular, we have shown $S$ is totally geodesic.
\end{proof}

\begin{proposition}
\label{prop.curv.ann}
\ \
\begin{enumerate}[(i)]
\item For $x \in U$ and $u,v \in T\mathcal{P}^\pm_x$, the curvature $R_x(u,v)$ annihilates $(\mathcal{P}_x^\pm)^\perp$.

\item The Ricci endomorphism at $x$  preserves each of the line bundles $L^+, \RR W$, and $L^-$.
\end{enumerate}
\end{proposition}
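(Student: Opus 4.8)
The plan is to prove both statements by analyzing the curvature representation via the Cartan connection, exploiting the $\lieg$-invariance of the geometric structures already established. The key tool is the correspondence between the Ricci endomorphism $A$ and the function $\varphi \circ \kappa$, together with the $\lieg$-invariant splitting $TU = L^- \oplus \RR W \oplus L^+$ and the totally geodesic foliations $\mathcal{P}^\pm$.

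For part (i), I would argue that since each foliation $\mathcal{P}^\pm$ is totally geodesic, the Levi-Civita connection of $g$ restricts to each leaf, and the intrinsic curvature of a leaf agrees with the ambient curvature evaluated on vectors tangent to that leaf. First I would recall that for a totally geodesic submanifold the Gauss equation forces $R_x(u,v)$ to preserve the tangent distribution $T\mathcal{P}^\pm_x$ when $u,v \in T\mathcal{P}^\pm_x$. The additional claim---that $R_x(u,v)$ \emph{annihilates} the orthogonal complement $(\mathcal{P}^\pm_x)^\perp$---I would deduce from the fact that the leaves are degenerate of dimension two, so that $(\mathcal{P}^\pm_x)^\perp = L^\pm_x$ is itself an isotropic line contained in $T\mathcal{P}^\pm_x$. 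Since $R_x(u,v)$ is a $g$-skew-symmetric endomorphism preserving the degenerate plane $T\mathcal{P}^\pm_x$ and fixing the null direction inside it, a direct check in an adapted basis $(V^\pm, W, V^\mp)$ shows that skew-symmetry together with the rank constraints coming from the two-dimensionality of the leaf forces $R_x(u,v)$ to kill the radical $L^\pm_x = (\mathcal{P}^\pm_x)^\perp$.

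For part (ii), I would use part (i) to constrain the Ricci endomorphism. Since $A_x$ is obtained by tracing the curvature, and since by part (i) the curvature tensor on vectors tangent to $\mathcal{P}^+$ (respectively $\mathcal{P}^-$) annihilates $L^+$ (respectively $L^-$), tracing over an adapted basis adapted to the splitting $L^- \oplus \RR W \oplus L^+$ shows that $A_x$ cannot send $L^\pm$ out of $L^\pm$. Alternatively and more cleanly, I would invoke $\lieg$-invariance directly: the Ricci endomorphism is $\lieg$-invariant, hence at any point its eigenspace structure must be compatible with the $\lieg$-invariant splitting into the three line bundles $L^+, \RR W, L^-$. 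Because $W$ has constant norm $1$ and is orthogonal to both isotropic lines, and because $A_x$ is $g$-symmetric, the symmetry of $A_x$ together with preservation of $\RR W$ (forced by part (i) applied in both foliations) yields that $A_x$ stabilizes each of the three lines.

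The main obstacle I expect is part (i): one must be careful that the foliations are \emph{degenerate}, so the usual Gauss-equation argument for totally geodesic submanifolds, which relies on nondegeneracy of the induced metric, does not apply verbatim. The right approach is to work with the second fundamental form being zero as an operator identity, $\nabla_U V \in \Gamma(T\mathcal{P}^\pm)$ for $U,V$ tangent to the leaf, and then combine this with the skew-symmetry of curvature and the explicit isotropic structure rather than with metric duality on the leaf. Once the annihilation statement in (i) is secured, part (ii) follows by a short tracing or invariance argument.
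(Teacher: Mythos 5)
Your part (ii) is essentially the paper's argument and is fine once part (i) is secured, but part (i) as you outline it has a genuine gap. Total geodesicity of $\mathcal{P}^-$ does give that $R_x(u,v)$ preserves $T\mathcal{P}^-_x$ for $u,v$ tangent to the leaf, and hence (since $R_x(u,v)$ is $g$-skew) that it preserves the radical line $L^-_x=(T\mathcal{P}^-_x)^\perp$ \emph{as a line}. But preserving that line is strictly weaker than annihilating it, and no combination of skew-symmetry, the Bianchi identity, or ``rank constraints from two-dimensionality'' closes the gap. Concretely, in a null basis $(e,h,f)$ with $e$ spanning $L^-_x$, $h=W(x)$, $f$ spanning $L^+_x$, the $g$-skew endomorphisms preserving the degenerate plane $\mbox{span}\{e,h\}$ form the two-dimensional subalgebra $\mbox{span}\{E,H\}\subset\mathfrak{o}(2,1)$ in the notation of Section 5; $E$ annihilates $e$, but $He=e$. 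The algebraic curvature tensor denoted $m_{eh}$ in the paper's table has $R(e,h)=H$: it satisfies every symmetry you invoke and preserves the plane, yet does not kill $e$ --- and ruling out precisely this component is the entire content of the proposition (one can also check via pair symmetry and first Bianchi that the coefficient $\mu$ in $R(e,h)e=\mu e$ is unconstrained by the algebraic identities alone).

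What is actually needed is geometric input from the Killing algebra, which is how the paper proceeds: on $U\setminus S$ take a Killing field $A^-$ with $A^-(x)\in L^-_x\setminus\{0\}$ and $[A^-,W]=0$; using constancy of the norms and of $g(A^-,W)$, that $W$ is geodesic, and that $\mathcal{P}^-$ is totally geodesic, one gets $\nabla_{A^-}A^-=aA^-$ and $\nabla_WA^-=bA^-$ with $a,b$ constant along the leaf, because the two commuting flows act transitively on it preserving $\nabla$, $A^-$, and $W$. Then $R(A^-,W)A^-=\nabla_{A^-}(bA^-)-\nabla_W(aA^-)=abA^--baA^-=0$, and the conclusion extends to $S$ by continuity since it is a closed condition. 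I would encourage you to redo part (i) along these lines. Your part (ii) then goes through as the trace computation you describe; note, however, that your ``alternative'' invariance argument is too weak on its own: the isotropy at points off $S$ is trivial, so $\lieg$-invariance of the Ricci endomorphism and of the splitting does not by itself force pointwise compatibility between them.
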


\begin{proof}
(i) The argument is the same for $\mathcal{P}^+$ and $\mathcal{P}^-$, so we write it for $\mathcal{P}^-$.  Let $x \in U \backslash S$.  Because $\lieg$ acts transitively on a neighborhood of $x$, there is a Killing field $A^-$ evaluating at $x$ to a nonzero element of $L^-(x)$.   Note that $[A^-,W] = 0$.  The orbit of $x$ under $A^-$ and $W$ coincides near $x$ with an open subset of $\mathcal{P}^-_x$. Because $L^-$ is $\lieg$-invariant, the values of $A^-$ in this relatively open set belong to $L^-$.  

Now $A^-. (g( A^- , A^- )) = 0$ implies $g( \nabla_{A^-} A^-, A^- ) = 0$, and $A^-. (g( A^-, W )) = 0$ gives
$$ 0 = g _x(\nabla_{A^-} A^-, W ) + g_x( A^-, \nabla_{A^-} W ) = g_x( \nabla_{A^-} A^-, W ),$$
using that $\mathcal{P}_x^-$ is totally geodesic.  Therefore $(\nabla_{A^-} A^-)_x = a A^-$ for some $a \in \RR$.  The flows along $A^-$ and $W$ act locally transitively on $\mathcal{P}^-_x$ preserving the connection $\nabla$ and commuting with $A^-$.  Thus $\nabla_{A^-} A^- \equiv a A^-$ on a neighborhood of $x$ in $\mathcal{P}^-_x$.

Next, $W. (g( A^- , W )) = 0$ gives 
$$ 0 = g( \nabla_W A^-, W ) + g( A^-, \nabla_W W ) =g( \nabla_W A^-, W ),$$
using that $W$ is geodesic.  Therefore $(\nabla_W A^-)_x = b A^-$ for some $b \in \RR$.  Again invariance of $\nabla$, $A^-$, and $W$ implies that $\nabla_W A^- \equiv b A^-$ on a neighborhood of $x$ in $\mathcal{P}^-_x$.  Now we compute
$$ R_x(A^-,W) A^- = (\nabla_{A^-} \nabla_W  - \nabla_W \nabla_{A^-}  - \nabla_{[A^-,W]})A^- = \nabla_{A^-} (bA^-) - \nabla_W (aA^-) = ab A^- - ba A^- = 0.$$
This property of the curvature we have proved on $U \backslash S$ remains true on $S$ because it is a closed condition.

\smallskip

(ii) It suffices to show that the Ricci endomorphism  preserves $L^- \oplus \RR W = T\mathcal{P}^-$ and $L^+ \oplus \RR W = T \mathcal{P}^+$.  Then invariance of $L^+$ and $L^-$ will follow from symmetry of $A$ with respect to $g$.  Again, we just write the proof for $\mathcal{P}^-$.  The Ricci endomorphism preserves $T \mathcal{P}^-$ if and only if $\mbox{Ricci}_x(u,v) = \mbox{Ricci}_x(v,u) = 0$ for any $u \in L_x^-$, $v \in T \mathcal{P}^-_x$.  Assume $u \neq 0$ and complete it to an adapted basis $( u, w, z )$ of $T_xU$ with $w = W(x)$, $z \in L^+_x$, and $g_x(u,z) = 1$.  Then, by part (i),
$$ \mbox{Ricci}_x(v,u) = g_x( R(v,u)u,z ) + g_x( R(v,w)u,w ) + g_x( R(v,z)u,u ) = 0+0+0=0.$$
\end{proof}

Let $\mathcal{R}$ be the $\lieg$-invariant reduction of $B$ to the subbundle comprising frames $(x, (v^-, W(x), v^+))$ with $v^- \in L_x^-$ and $v^+ \in L_x^+$.  Now $\mathcal{R}$ is a principal $A$-bundle, where $\RR^* \cong A < P$ is the subgroup with matrix form
$$ A = \left\{ 
\left(
\begin{array}{ccc}
\lambda^2 &   &   \\
  & 1 &   \\
 &  & \lambda^{-2}
\end{array}
\right)
\ : \ \lambda \in \RR^*
\right\}.
$$

Note that, at any $b \in \mathcal{R}$, the projection $\overline{\omega_b(W)} = h$.  Proposition \ref{prop.curv.ann} translates to the following statement on $\mathcal{R}$.

\begin{proposition}
For any $b \in \mathcal{R}$, the component $\bar{\kappa}_b$ in the representation $E_0 \oplus E_2$, corresponding to the Ricci endomorphism, is diagonal, so has the form
$$\bar{\kappa}_b = ym_d + zm_{2h^2 - ef} \qquad y, z \in \RR.$$
\end{proposition}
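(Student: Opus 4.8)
The plan is to reinterpret Proposition~\ref{prop.curv.ann}(ii) as a statement about the reduced frame bundle $\mathcal{R}$ and then read off the admissible curvature components directly from the table in Section~\ref{subsection.curv.rep}. Recall that the Ricci endomorphism $A_x$ corresponds, via a frame $b \in B$ over $x$, to $\varphi(\kappa(b)) \in E_0 \oplus E_2 \subset \mathrm{End}(\RR^3)$, and that $\varphi|_{\mathbb{W}}$ is an isomorphism onto its image, so the component $\bar\kappa_b = \varphi(\kappa(b))$ is well defined; concretely, $\bar\kappa_b$ is the matrix of $A_x$ expressed in the basis $(b(e), b(h), b(f))$ of $T_xU$.

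First I would invoke the definition of $\mathcal{R}$: a frame $b \in \mathcal{R}$ over $x$ sends the standard basis $(e,h,f )$ of $\RR^{2,1}$ to an adapted basis $(v^-, W(x), v^+)$ with $v^- \in L_x^-$ and $v^+ \in L_x^+$, so that $b(\RR e) = L_x^-$, $b(\RR h) = \RR W(x)$, and $b(\RR f ) = L_x^+$. Proposition~\ref{prop.curv.ann}(ii) says precisely that $A_x$ preserves each of the three lines $L_x^-$, $\RR W(x)$, $L_x^+$. Transported through $b^{-1}$, this means that $\bar\kappa_b$ preserves each of $\RR e$, $\RR h$, $\RR f$; that is, $\bar\kappa_b$ is diagonal in the basis $(e,h,f )$.

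It then remains to identify the diagonal elements of $E_0 \oplus E_2$ in this basis. Inspecting the second column of the table, and keeping in mind that $w^*(u) = \langle w,u \rangle$ forces $f^*$ to be the functional dual to $e$, $e^*$ the functional dual to $f$, and $h^*$ the functional dual to $h$, one checks that among the six listed basis vectors only $m_d = 2(f^* \otimes e + h^* \otimes h + e^* \otimes f )$ and $m_{2h^2-ef} = 2h^* \otimes h - f^* \otimes e - e^* \otimes f$ are diagonal, corresponding to $\mathrm{diag}(2,2,2)$ and $\mathrm{diag}(-1,2,-1)$; the remaining four, $m_{e^2}, m_{eh}, m_{hf}, m_{f^2}$, are strictly off-diagonal. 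Equivalently, the $\mathbb{I}$-symmetry constraint defining $E_0 \oplus E_2$ forces a diagonal endomorphism to have the shape $\mathrm{diag}(a,b,a)$, and such endomorphisms form exactly the two-dimensional span of $m_d$ and $m_{2h^2-ef}$. Hence any diagonal $\bar\kappa_b$ is of the form $y\,m_d + z\,m_{2h^2-ef}$ for some $y,z \in \RR$, as claimed.

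The only real subtlety, rather than a genuine obstacle, is the bookkeeping with the Lorentzian dual basis: because the pairing $\mathbb{I}$ is off-diagonal, one must verify carefully that $m_d$ really is a multiple of the identity and that $m_{2h^2-ef}$ is genuinely diagonal, so that exactly these two span the diagonal part of $E_0 \oplus E_2$. Everything else is a direct transcription of Proposition~\ref{prop.curv.ann}(ii) into frame components. As a consistency check, both $m_d$ and $m_{2h^2-ef}$ commute with the diagonal group $A$ acting by conjugation, which is compatible with the conclusion holding uniformly for every $b \in \mathcal{R}$.
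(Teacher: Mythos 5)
Your argument is correct and is exactly the paper's (the paper states this proposition as an immediate translation of Proposition~\ref{prop.curv.ann}(ii) into the adapted frames of $\mathcal{R}$, without spelling out the details you supply). Your bookkeeping with the Lorentzian dual basis is right: $m_d$ and $m_{2h^2-ef}$ act as $\operatorname{diag}(2,2,2)$ and $\operatorname{diag}(-1,2,-1)$, and they span precisely the diagonal, $\mathbb{I}$-symmetric endomorphisms $\operatorname{diag}(a,b,a)$.
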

Note that $H \cdot \bar{\kappa}_b = 0$, so by $P$-equivariance of $\bar{\kappa}$, the derivative in the vertical direction $H^\ddag.\bar{\kappa}_b=0$.  Because this curvature function is also $\lieg$-invariant, it is constant on $\mathcal{R}_{U \backslash S}$.  By continuity, we conclude that on all $\mathcal{R}$,
$$ \bar{\kappa} \equiv ym_d + zm_{2h^2 - ef} \qquad y, z \in \RR.$$

%On the other hand, invariance of $\bar{\kappa}_{b_0}$ by the semisimple isotropy (generated by $H$)  at the origin means $\bar{\kappa}_{b_0} \in \mbox{span}\{ m_d, m_{2h^2 -ef} \}$.   
%
Since $\mathfrak{g}$ acts transitively on $U \setminus S$ and preserves $\mathcal{R}$,  for any $b \in \left.\mathcal{R}\right|_{U \backslash S}$
there exists  a sequence $a_n$ in $A$ such that $\varphi_n b a_n^{-1} \rightarrow b_0$, where each $\varphi_n$ is in the pseudo-group generated by flows along local Killing fields in $\lieg$; then $(\mbox{Ad } a_n)(\omega_b(\lieg)) \rightarrow \omega_{b_0}(\lieg)$ in the Grassmannian $\mbox{Gr}(3,\lieh)$.  Let us consider such a sequence $a_n$ corresponding to a point $b \in B$ lying above  $U \setminus S$. Then we prove the following

\begin{lemma}
\label{lemma.lambda.diverges}
Write
$$a_n = 
\left(
\begin{array}{ccc}
\lambda_n^2 &   &   \\
  & 1 &   \\
 &  & \lambda_n^{-2}
\end{array}
\right)
, \  \lambda_n \in \RR^*.
$$
Then $\lambda_n \rightarrow \infty$.
\end{lemma}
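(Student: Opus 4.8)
The plan is to argue purely from the dynamics of $\Ad a_n$ on $\lieh$ together with the shape of the limiting subspace $\omega_{b_0}(\lieg)$, with no further appeal to the curvature. Writing $\mu_n = \lambda_n^2 > 0$, the element $a_n$ is the exponential of $(\log \mu_n)H$, so $\Ad a_n$ is diagonalized by the $\ad H$-eigenspace decomposition of $\lieh$: it scales the weight-$(+1)$ vectors $e$ and $E$ by $\mu_n$, fixes the weight-$0$ vectors $h$ and $H$, and scales the weight-$(-1)$ vectors $f$ and $F$ by $\mu_n^{-1}$. The two structural inputs are: (a) by the computation in Proposition \ref{prop.aff.plus.r}, $\omega_{b_0}(\lieg) = \mbox{span}\{ H, h, e + \gamma E \}$, whose image under the projection $p : \lieh \to \RR^{2,1}$ is the plane $\mbox{span}\{ e, h \}$, so in particular $f \notin p(\omega_{b_0}(\lieg))$; and (b) since $\pi(b) \in U \setminus S$, the isotropy there is trivial, so $L := \omega_b(\lieg)$ projects isomorphically onto $\RR^{2,1}$ and may be written $L = \mbox{span}\{ e + \phi_e,\, h + \phi_h,\, f + \phi_f \}$ with $\phi_e, \phi_h, \phi_f \in \liep$.

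I would then argue by contradiction, assuming $\mu_n \not\to \infty$ and passing to a subsequence with $\mu_n \to \mu_\infty \in [0,\infty)$. If $\mu_\infty \in (0,\infty)$, then $a_n \to a_\infty \in A$, hence $\Ad a_n(L) \to \Ad a_\infty(L)$; but $\Ad a_\infty$ preserves the splitting $\liep \oplus \RR^{2,1}$ and acts invertibly on $\RR^{2,1}$, so $\Ad a_\infty(L)$ still projects onto all of $\RR^{2,1}$, contradicting $\Ad a_n(L) \to \omega_{b_0}(\lieg)$, whose projection is only two-dimensional.

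The remaining case $\mu_\infty = 0$ is the heart of the argument. Writing $\phi_f = a_f E + b_f H + c_f F$ and rescaling the third basis vector of $\Ad a_n(L)$ by $\mu_n$, I compute
$$ v_n := \mu_n \cdot \Ad a_n (f + \phi_f) = f + a_f \mu_n^2 E + b_f \mu_n H + c_f F \longrightarrow f + c_f F $$
as $\mu_n \to 0$. Since $v_n \in \Ad a_n(L)$ and $\Ad a_n(L) \to \omega_{b_0}(\lieg)$ in $\mbox{Gr}(3,\lieh)$ — recalling that limits of sequences of elements of converging subspaces lie in the limit subspace — the vector $f + c_f F$ lies in $\omega_{b_0}(\lieg)$. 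Applying $p$ gives $f \in p(\omega_{b_0}(\lieg)) = \mbox{span}\{ e, h \}$, a contradiction. Hence $\mu_n \to \infty$, which is the asserted conclusion $\lambda_n \to \infty$.

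The main obstacle — really the only place where something must be set up with care — is the $\mu_\infty = 0$ case: one must exhibit the precise rescaling of an element of $\Ad a_n(L)$ that survives in the limit and witnesses the forbidden direction $f$ in the projection of $\omega_{b_0}(\lieg)$. The case $\mu_\infty \in (0,\infty)$ and the weight bookkeeping for $\Ad a_n$ are routine once the two structural facts (a) and (b) are recorded.
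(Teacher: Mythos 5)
Your proof is correct and follows essentially the same route as the paper: the paper likewise first rules out convergence of $\lambda_n$ to a nonzero number by noting the limit would still project onto all of $\RR^{2,1}$, and then excludes $\lambda_n \to 0$ by rescaling $\Ad a_n(f+\rho(f))$ by $\lambda_n^2$ to obtain a limit of the form $f+\xi$ with $\xi \in \liep$ inside $\omega_{b_0}(\lieg)$, contradicting $\overline{\omega_{b_0}(\lieg)} = \mbox{span}\{e,h\}$. Your explicit expansion $v_n = f + a_f\mu_n^2 E + b_f\mu_n H + c_f F$ just makes the paper's parenthetical remark about the eigenvalues of $\Ad a_n$ on $\liep$ concrete.
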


\begin{proof}
First note that $\lambda_n$ cannot  converge to a nonzero number, because in this case $\lim_n (\Ad a_n)(\omega_b(\lieg)) = \omega_{b_0}(\lieg)$ would still project onto $\RR^{2,1}$ modulo $\liep$, contradicting that the $\lieg$-orbit of $0$ is two-dimensional.  This also shows that $a_n$ cannot admit a convergent  subsequence, meaning that $a_n$ goes to infinity in $A$.

The space $\omega_b(\lieg)$ can be written as $\mbox{span} \{ e + \rho(e), h + \rho(h), f + \rho(f) \}$ for $\rho : \RR^{2,1} \rightarrow \liep$ a linear map.  The space $(\Ad a_n)(\omega_b(\lieg))$ contains $\lambda_n^{-2} f + a_n \cdot \rho(f)$, so it contains $f + \lambda_n^2 a_n \cdot \rho(f)$.  If $\lambda_n \rightarrow 0$, then this last term converges to $f + \xi \in \omega_{b_0}(\lieg)$, for some $\xi \in \liep$ (because the adjoint action of $a_n$ on $\mathfrak{p}$ is diagonal with eigenvalues $\lambda_n^2$, 1 and $\lambda_n^{-2}$).  But $\overline{\omega_{b_0} (\lieg)}$ is spanned by $e$ and $h$, so this is a contradiction.
\end{proof}

Differentiating the function $\bar{\kappa} : B \rightarrow \mathbb{V}^{(0)} = E_0 \oplus E_2$ gives, via the parallelization of $B$ arising from $\omega$, a $P$-equivariant, automorphism-invariant function $D^{(1)} \bar{\kappa}: B \rightarrow \mathbb{V}^{(1)} = \lieh^* \otimes \mathbb{V}^0$, and similarly, by iteration, functions $D^{(i)} \bar{\kappa} : B \rightarrow \mathbb{V}^{(i)} = \lieh^* \otimes \mathbb{V}^{(i-1)}$; automorphism-invariant here means $D^{(i)} \bar{\kappa}(f(b)) = D^{(i)} \bar{\kappa}(b)$ for all $b \in B$ and all automorphisms $f$.  For vertical directions $X \in \liep$, the derivative is determined by equivariance: $X^\ddag. \bar{\kappa} = - X \cdot \bar{\kappa}$. Our goal, in order to show local homogeneity of $U$, is to show that $D^{(i)} \bar{\kappa}$ has values on $B$ in a single $P$-orbit.  Because $\bar{\kappa}$ determines $\kappa$ for 3-dimensional metrics, it will follow that $D^{(i)} \kappa$ has values on $B$ in a single $P$-orbit, which suffices by Singer's theorem  to conclude local homogeneity (see
Proposition 3.8 in~\cite{Melnick2} for a version of Singer's theorem for real analytic Cartan connections and  also~\cite{Pecastaing} for the smooth case). By $P$-equivariance of these functions, it suffices to show that the values on $\mathcal{R}$ lie in a single $A$-orbit.  We will prove the following slightly stronger result:

\begin{proposition}
The curvature $\bar{\kappa}$ and all of its derivatives $D^{(i)} \bar{\kappa}$ are constant on $\mathcal{R}$. 
\end{proposition}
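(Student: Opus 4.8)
The plan is to argue by induction on $i$, the base case---constancy of $\bar\kappa$ on $\mathcal{R}$---being already in hand; since $\varphi$ restricted to $\mathbb{W}$ is an isomorphism, the full curvature function $\kappa$ is then constant on $\mathcal{R}$ as well. The object to control is $D^{(i+1)}\bar\kappa(b)$, which via the parallelization $\omega$ is the linear map $\lieh \to \mathbb{V}^{(i)}$ sending $Z \mapsto (\omega^{-1}(Z)).D^{(i)}\bar\kappa$; it suffices to show that the six values on the basis $(e,h,f,E,H,F)$ are independent of $b \in \mathcal{R}$. Several of these are immediate. For the transverse vertical directions $E, F$ (which are vertical in $B$ but not tangent to the $A$-bundle $\mathcal{R}$), $P$-equivariance gives $E^\ddag.D^{(i)}\bar\kappa = -E \cdot D^{(i)}\bar\kappa$ and likewise for $F$, both constant by the inductive hypothesis. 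Along $H^\ddag$, which is tangent to $\mathcal{R}$, the derivative of the inductively constant $D^{(i)}\bar\kappa$ vanishes. The genuine difficulty is the horizontal directions $\omega^{-1}(\RR^{2,1})$, which in general are \emph{not} tangent to $\mathcal{R}$.

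To handle them I would first describe $T_b\mathcal{R}$ in terms of $\omega$. Because $\mathcal{R}$ is a principal $A$-bundle with $\mbox{Lie}(A) = \RR H$ and $\mathcal{R} \to U$ a submersion, one has $\omega_b(T_b\mathcal{R}) = \RR H \oplus \{ v + \sigma(v) : v \in \RR^{2,1} \}$ for a linear ``deviation'' map $\sigma : \RR^{2,1} \to \RR E \oplus \RR F$ recording the failure of the Levi-Civita connection to preserve the adapted frames $(v^-, W, v^+)$. The key step is to show that $\sigma$ is $A$-invariant, that is $\sigma(e) \in \RR E$, $\sigma(h) = 0$, and $\sigma(f) \in \RR F$, with no off-weight components. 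This is a restatement of geometric facts already established: $\nabla_W W = 0$ forces $\sigma(h) = 0$, while the identities $\nabla_{v^\pm} v^\pm \in L^\pm$ (from the totally geodesic foliations $\mathcal{P}^\pm$ of Proposition~\ref{prop.curv.ann} and its proof) force the $F$-component of $\sigma(e)$ and the $E$-component of $\sigma(f)$ to vanish; a short computation of the relevant columns of the connection matrix in the basis $(e,h,f)$ makes each implication explicit. Being $A$-invariant, and also $\lieg$-invariant with $\lieg$ transitive on $U \setminus S$ and on $S$, and continuous across $S$, the map $\sigma$ is then constant on $\mathcal{R}$.

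With $\sigma$ constant, the horizontal directions are handled by the tangency relation: since $\omega^{-1}(v + \sigma(v))$ is tangent to $\mathcal{R}$ and $D^{(i)}\bar\kappa$ is inductively constant there,
\[
\omega^{-1}(v).D^{(i)}\bar\kappa = -\,\sigma(v)^\ddag.D^{(i)}\bar\kappa = \sigma(v) \cdot D^{(i)}\bar\kappa,
\]
which is constant because both $\sigma(v)$ and $D^{(i)}\bar\kappa$ are. In particular $\omega^{-1}(h)$ is already tangent to $\mathcal{R}$ (as $\sigma(h) = 0$), giving a vanishing derivative there. Collecting the six basis values shows that $D^{(i+1)}\bar\kappa$ is constant on $\mathcal{R}$, closing the induction.

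Finally, Lemma~\ref{lemma.lambda.diverges} serves as a consistency check across $S$: approaching $S$ forces $\lambda_n \to \infty$, and automorphism-invariance together with equivariance give $a_n \cdot D^{(i)}\bar\kappa(b) \to D^{(i)}\bar\kappa(b_0)$, so all positive-weight components must already vanish and the value over $S$ is the common weight-zero value found above. I expect the main obstacle to be the second step: verifying that the totally geodesic and geodesic conditions annihilate \emph{precisely} the non-weight-zero part of $\sigma$ (equivalently, that $\nabla$ preserves the splitting $L^- \oplus \RR W \oplus L^+$ in exactly the mixed components that matter), and keeping the equivariance bookkeeping---which $\liep$-directions are tangent to $\mathcal{R}$ and which are transverse---consistent throughout the induction.
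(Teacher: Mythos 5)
Your argument is correct, and it takes a genuinely different route from the paper's. The paper closes the induction by showing only that $H \cdot D^{(i+1)} \bar{\kappa}_b = 0$ at a single point $b$ over $U \setminus S$, which suffices by the same transitivity-plus-equivariance-plus-continuity argument used for $i=0$; the needed input is the explicit form of $\omega_b(\lieg)$ in Lemma \ref{lemma.omega.g}, obtained Lie-algebraically from the curvature identity for Killing fields together with the divergence statement of Lemma \ref{lemma.lambda.diverges}. You instead work with the full tangent space $\omega_b(T_b\mathcal{R})$, encode it by the deviation map $\sigma$, and prove constancy of all six components of $D^{(i+1)}\bar{\kappa}$ directly. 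The two objects agree over $U \setminus S$: there $\omega_b(T_b\mathcal{R}) = \omega_b(\lieg) + \RR H$, and the weight-zero form of $\sigma$ is exactly what Lemma \ref{lemma.omega.g} records as the absence of an $F$-component on $e + \gamma E + \beta H$ and of an $E$-component on $f + \alpha H + \delta F$. But you derive it instead from the covariant-derivative identities $\nabla_W W = 0$ and $\nabla_{v^\pm} v^\pm \in L^\pm$; this is legitimate, since the latter condition is independent of the choice of section of $L^\pm$ (it rescales quadratically) and extends from $U \setminus S$, where the proof of Proposition \ref{prop.curv.ann} establishes it, to $S$ by continuity. What your approach buys: it bypasses Lemmas \ref{lemma.lambda.diverges} and \ref{lemma.omega.g} entirely, works uniformly on and off $S$, and makes the geometric content (the Levi-Civita connection preserves the reduction up to weight-zero terms) explicit. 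What it costs: you must additionally prove that $\sigma$ itself is constant on $\mathcal{R}$---your $A$-invariance-plus-$\lieg$-invariance-plus-continuity argument does this, but it is an extra step the paper avoids by computing only the $H$-weight of $D^{(i+1)}\bar{\kappa}_b$. Your closing paragraph invoking Lemma \ref{lemma.lambda.diverges} as a ``consistency check'' carries no logical weight in your argument and could be dropped.
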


\begin{proof}
Recall that
%By the lemma above, $a_n  m_{e^2} = \lambda_n^4 m_{e^2}$ and $a_n m_{eh} = \lambda_n^2 m_{eh}$ are both unbounded as $n \rightarrow \infty$.  Therefore $\bar{\kappa}_b$ cannot have nonzero components on $m_{e^2}$ or $m_{eh}$, and $\bar{\kappa}_b = y m_d + z m_{2h^2 - ef}$ for some $y,z \in \RR$.  Then 
%$$ a_n \bar{\kappa}_b = \bar{\kappa}_b \rightarrow \bar{\kappa}_{b_0} = y m_d + z m_{2h^2 - ef} $$
$$\bar{\kappa} \equiv y m_d + z m_{2h^2 - ef}$$
on all of $\mathcal{R}$, for some fixed $y,z \in \RR$.
%Now $H \cdot \bar{\kappa}_b = 0$.  Since $\mathcal{R}$ and $\bar{\kappa}$ are $\lieg$-invariant, $\bar{\kappa}$ is also $H$-invariant on every fiber above  $U \backslash S$ in $\mathcal{R}$.  The Killing fields of $\lieg$ together with $H^\ddag$ span $T\mathcal{R}$, so $\bar{\kappa}$ is constant over all points of $\left. \mathcal{R} \right|_{U \backslash S}$ and therefore on all of $\mathcal{R}$.
The proof proceeds by induction on $i$.  Suppose that for $i \geq 0$, the derivative $D^{(i)} \bar{\kappa}$ is constant on $\mathcal{R}$, so that in particular, the value $D^{(i)} \bar{\kappa}$ is annihilated by $H$.  As in the proof for $i=0$ above, to show that $D^{(i+1)} \bar{\kappa}$ is constant on $\mathcal{R}$, it suffices to show that $H^\ddag. D^{(i+1)} \bar{\kappa}_b = - H \cdot D^{(i+1)} \bar{\kappa}_b = 0$ at a single point $b \in \left. \mathcal{R} \right|_{U \backslash S}$.

To complete the induction step, we will need the following information on $\omega_b(\lieg)$.

\begin{lemma}
\label{lemma.omega.g}
At $b \in \mathcal{R}$ lying over $x \in U \backslash S$, the Killing algebra evaluates to
$$ \omega_b(\lieg) = \mbox{span} \{ e + \gamma E + \beta H, h  - \gamma H, f + \alpha H + \delta F \}, \qquad \gamma, \beta, \alpha, \delta \in \RR.$$
% Here, $\gamma$ is as in proposition \ref{prop.aff.plus.r}, while $\beta, \alpha,$ and $\delta$ are new arbitrary real constants.
\end{lemma}

\begin{proof}
Write 
$$\omega_b(\lieg) = \mbox{span} \{ e + \rho(e), h + \rho(h), f + \rho(f) \}.$$
From Proposition \ref{prop.aff.plus.r}, we know that
$$ (\Ad a_n)(\omega_b(\lieg)) \rightarrow \omega_{b_0}(\lieg) = \mbox{span}\{ e + \gamma E,h,H \}.$$
Now Lemma \ref{lemma.lambda.diverges} implies that $\rho(h)$ and $\rho(f)$ both have zero component on $E$. Indeed, since  this component is dilated by $\lambda_n^2$,  it must vanish in order that $E \notin \omega_{b_0}(\lieg)$.  

At the point $b$, let $A^-$ be a Killing field with $\pi_{*b} A^- \in L^-_{\pi(b)}$, so we can assume $\overline{\omega_b(A^-)} = e$. We have ${\omega}_b(A^-) = e + \rho(e)$ and $\omega_b(W) = h + \rho(h)$.  
Recall from Proposition \ref{prop.aff.plus.r} that $\kappa_{b_0}(h,e) = r E$. The fact that $\bar{\kappa}_b = \bar{\kappa}_{b_0}$ implies that the full curvature $\kappa_b = \kappa_{b_0}$, so also 
$$ \kappa_b(e,h) = K_b(A^-,W) = r E.$$
On the other hand, equation (\ref{eqn.curv.killing}) gives
$$ 0 = \omega_b[A^-,W]  =  [h + \rho(h), e + \rho(e)]  + r E,$$
so 
$$ \rho(h) e = \rho(e)h \qquad \mbox{and} \qquad [\rho(h), \rho(e)] = -r E.$$ 
Writing $\rho(e) = \gamma E + \beta H + \delta F$ and $\rho(h) = \beta' H + \delta' F$ gives $\beta' = - \gamma$ and $\delta = \delta' = 0$ from the first equation.   Note that the second equation gives $\gamma^2 = -r$, which is consistent with Proposition \ref{prop.aff.plus.r}.  
 \end{proof}

We now use $\lieg$-invariance of $D^{(i)}\bar{\kappa}$.  For abitrary $X \in \lieh$, write $X^\ddag$ for the coresponding $\omega$-constant vector field on $B$.  Lemma \ref{lemma.omega.g} gives
\begin{enumerate}
\item $(e + \gamma E + \beta H)^\ddag(b). D^{(i)} \bar{\kappa} \equiv 0$
\item $(h - \gamma H)^\ddag(b).  D^{(i)} \bar{\kappa} \equiv 0$
\item $(f + \alpha H + \delta F)^\ddag(b). D^{(i)} \bar{\kappa} \equiv 0$
\end{enumerate}

From (1), 
\begin{eqnarray*}
 D^{(i+1)} \bar{\kappa}_b(e) & = & - (\gamma E + \beta H)^\ddag(b).D^{(i)} \bar{\kappa} \\
& = & (\gamma E + \beta H) \cdot D^{(i)} \bar{\kappa}_b \\
& = & \gamma E \cdot D^{(i)} \bar{\kappa}_b.
\end{eqnarray*}
The last equality above follows from the induction hypothesis.  Then

\begin{eqnarray*}
(H \cdot D^{(i+1)} \bar{\kappa}_b)(e) & = & H \cdot ( D^{(i+1)} \bar{\kappa}_b(e)) - D^{(i+1)} \bar{\kappa}_b([H,e]) \\
& = & H \cdot (\gamma E ) \cdot D^{(i)} \bar{\kappa}_b - D^{(i+1)} \bar{\kappa}_b(e) \\
& = & \gamma ([H,E] + EH) \cdot D^{(i)} \bar{\kappa}_b - \gamma E \cdot (D^{(i)} \bar{\kappa}_b) \\
& = & \gamma E \cdot D^{(i)} \bar{\kappa}_b  - \gamma E \cdot D^{(i)} \bar{\kappa}_b = 0
\end{eqnarray*}
where the last equality again uses the induction hypothesis.  Item (2) gives, by a similar calculation,
$$ D^{(i+1)} \bar{\kappa}_b (h) = - \gamma H \cdot D^{(i)} \bar{\kappa}_b = 0$$
and 
$$ (H \cdot D^{(i+1)} \bar{\kappa}_b)(h) = 0.$$

Finally, (3) gives
$$  D^{(i+1)} \bar{\kappa}_b (f) = \delta F \cdot D^{(i)} \bar{\kappa}_b$$
and again
$$   (H \cdot D^{(i+1)} \bar{\kappa}_b)(f) = 0.$$
We have thus shown vanishing of $H \cdot D^{(i+1)} \bar{\kappa}_b$ on $\RR^{2,1}$.  The remainder of $\lieh$ is obtained by taking linear combinations with $\liep$.  The $H$-invariance of $D^{(i)} \bar{\kappa}$ and $P$-equivariance of $D^{(i+1)} \bar{\kappa}$ give, for $X \in \liep$,
\begin{eqnarray*}
(H \cdot D^{(i+1)}\bar{\kappa_b})(X) & = & H \cdot  (D^{(i+1)}\bar{\kappa}_b(X)) -  D^{(i+1)}\bar{\kappa}_b([H,X]) \\
& = & - H \cdot X \cdot D^{(i)} \bar{\kappa}_b + [H,X] \cdot D^{(i)} \bar{\kappa}_b \\
& = & - X \cdot H \cdot  D^{(i)} \bar{\kappa}_b = 0.
\end{eqnarray*}
The conclusion is $H \cdot D^{(i+1)} \bar{\kappa}_b = 0$, as desired.
\end{proof}

Now if $\bar{\kappa}$ and all its derivatives are constant on $\mathcal{R}$, then $U$ is curvature homogeneous to all orders, and therefore, $U$ is locally homogeneous by Singer's theorem for Cartan connections~\cite{Melnick2, Pecastaing}.

\bigskip

Let us consider now the remaining case where the isotropy at the origin is unipotent.

\subsection{Unipotent isotropy}

\begin{proposition} \label{solab} If  the isotropy at $0 \in S$ is unipotent, then $\lieg$ is isomorphic to  $\mathfrak{sol}(a,b)$, for $b \neq -a$.
\end{proposition}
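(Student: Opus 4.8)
The plan is to mirror the computation of Proposition~\ref{prop.aff.plus.r}, replacing the semisimple isotropy generator $H$ by the nilpotent generator $E$. Since any nonzero nilpotent line in $\liep = \mathfrak{o}(2,1)$ is $P$-conjugate into $\RR E$, I first choose $b_0 \in \pi^{-1}(0)$ so that $\omega_{b_0}(\lieg) \cap \liep = \RR E$, and let $Y \in \lieg$ be the isotropy generator with $\omega_{b_0}(Y) = E$. The projection $\overline{\omega_{b_0}(\lieg)}$ to $\RR^{2,1}$ is a $2$-dimensional $E$-invariant subspace; because $E$ acts on $\RR^{2,1}$ as a single nilpotent Jordan block, its only invariant plane is the degenerate plane $\mathrm{span}\{e,h\}$ (consistent with $S$ being degenerate, so no further conjugation is needed here, unlike the semisimple case). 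Hence there is a basis $(X,Y,Z)$ of $\lieg$ with $\omega_{b_0}(X) = e + \rho(e)$ and $\omega_{b_0}(Z) = h + \rho(h)$, where, after subtracting multiples of $Y$, the linear map $\rho$ takes values in $\mathrm{span}\{H,F\} \subset \liep$.

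Next I would extract the brackets exactly as before. Since $Y$ is vertical at $b_0$, one has $K_{b_0}(Y,\cdot) = 0$, so equation (\ref{eqn.curv.killing}) applied to $(Y,X)$ and $(Y,Z)$ expresses $\omega_{b_0}[Y,X]$ and $\omega_{b_0}[Y,Z]$ as explicit brackets in $\lieh$; requiring these to lie in $\omega_{b_0}(\lieg)$, whose only $\liep$-part is $\RR E$, kills most components of $\rho$ and yields $[Y,X] = \beta Y$ and $[Y,Z] = X + \beta' Y$ for scalars $\beta, \beta'$. For the last bracket I use invariance of $\kappa$ under the isotropy: automorphism-invariance gives $Y.\kappa_{b_0} = E^\ddag.\kappa_{b_0} = -E\cdot \kappa_{b_0} = 0$, and evaluating $E\cdot \kappa_{b_0} = 0$ on the pair $(e,h)$ (using $E\cdot e = 0$, $E\cdot h = -e$) forces $\kappa_{b_0}(e,h) = K_{b_0}(X,Z)$ into the centralizer $\RR E$ of $E$ in $\liep$; write it $sE$. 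Then (\ref{eqn.curv.killing}) for $(X,Z)$ gives $[X,Z] = \beta' X + \beta Z + sY$, subject to the consistency condition $\beta\beta' = 0$ coming from membership of $\omega_{b_0}[X,Z]$ in $\omega_{b_0}(\lieg)$.

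I would then feed in the structural results of Section~\ref{section2}. If $\beta \neq 0$, then $\beta' = 0$ and one checks that $[\lieg,\lieg] = \lieg$, contradicting solvability of $\lieg$ (Lemma~\ref{dim}(iii)); hence $\beta = 0$. With $\beta = 0$ the plane $\mathrm{span}\{X,Y\}$ is an abelian ideal preserved by $\ad Z$, and $\mathrm{tr}(\ad Z) = -2\beta'$, so nonunimodularity of $\lieg$ (Lemma~\ref{3}) forces $\beta' \neq 0$. The matrix of $\ad Z$ on $\mathrm{span}\{X,Y\}$ is $\left(\begin{smallmatrix} -\beta' & -1 \\ -s & -\beta' \end{smallmatrix}\right)$, with characteristic polynomial $(\lambda+\beta')^2 - s$ and eigenvalues $-\beta' \pm \sqrt{s}$. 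When these are real, diagonalizing $\ad Z$ produces commuting eigenvectors $T_1, T_2$ spanning the ideal, and rescaling $Z$ identifies $\lieg$ with $\mathfrak{sol}(a,b)$, where $a,b$ are the eigenvalues; since $a+b = -2\beta' \neq 0$, we get $b \neq -a$, as claimed.

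The crux, and the step I expect to be the main obstacle, is establishing that $s > 0$, i.e.\ that the eigenvalues of $\ad Z$ are real rather than a complex-conjugate pair (which would instead give the Lie algebra of a similarity group, not of the form $\mathfrak{sol}(a,b)$). The sign of $s$ is a genuine invariant: the residual freedom in $b_0$ preserves the flag fixed by $E$ and rescales $s$ only by a positive factor, so it cannot be normalized away, and the first-order bracket analysis imposes no relation on $s$ beyond $\beta\beta'=0$. I would settle positivity by identifying $s$ with the curvature along $S$: under the homomorphism $\varphi$ the component $sE$ of $\kappa_{b_0}$ maps to the Ricci endomorphism $s\,(e^* \otimes e)$, which is precisely the nonzero nilpotent Ricci operator of Proposition~\ref{eigenvalues}, so in particular $s \neq 0$. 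To pin the sign, I would match this with the totally geodesic structure of $S$ (Proposition~\ref{surface}): the Heisenberg and $\mathfrak{sol}(1,-1)$ possibilities are unimodular and hence excluded by Lemma~\ref{3}, leaving the nonflat case in which the induced connection on $S$ is that of $\operatorname{Aff}$, whose adjoint eigenvalues are real; this forces $s > 0$ and completes the identification $\lieg \cong \mathfrak{sol}(a,b)$ with $b \neq -a$. Alternatively, one can invoke the Calvaruso–Kowalski curvature classification already used in Proposition~\ref{eigenvalues} to rule out the complex-eigenvalue group directly.
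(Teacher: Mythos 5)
Your proof is correct and follows essentially the same route as the paper: normalize a frame $b_0$ so the isotropy maps to $E$, use equation (\ref{eqn.curv.killing}) and invariance of $\kappa$ to pin down the brackets and $K_{b_0}(X,Z)=sE$, discard the branch with $[\lieg,\lieg]=\lieg$ (the paper kills the same branch via unimodularity and Lemma~\ref{3} rather than solvability), and obtain $\RR\ltimes_\varphi\RR^2$ with $\varphi$ having eigenvalues $-\beta'\pm\sqrt{s}$ and nonzero trace. Your explicit treatment of $s>0$ via Proposition~\ref{surface}(iii) and Lemma~\ref{3} is exactly the argument the paper supplies (just after its next proposition) to justify $\RR$-diagonalizability, so nothing is missing.
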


\begin{proof}
Let $Y \in \lieg$ generate the isotropy at $0$.
There is $b_0 \in \pi^{-1}(0)$ for which $\omega_{b_0} (Y) = E$.  The projection $\overline{\omega_{b_0}(\lieg)}$ of $\omega_{b_0}(\lieg)$ to $\RR^{2,1}$ is 2-dimensional and $E$-invariant, so it must be span$\{ e,h \}$.  Therefore, there is a basis $( X,Y,Z )$ of $\lieg$ such that

$$ \omega_{b_0}(X) = e + \alpha H + \beta F \qquad \mbox{and} \qquad \omega_{b_0} (Z) =  h + \gamma H + \delta F$$
for some $\alpha, \beta, \gamma, \delta \in \RR$.  Because $K_{b_0} (Y, \cdot ) = 0$, equation (\ref{eqn.curv.killing})  gives
$$ \omega_{b_0} [Y,X] = [e + \alpha H + \beta F, E] = \alpha E - \beta H \in \omega_{b_0} (\lieg)$$
so $\beta = 0$ and $[Y,X] = \alpha Y$.  A similar computation gives 
$$ \omega_{b_0}[Y,Z] =  [ h + \gamma H + \delta F,E] = e + \gamma E - \delta H$$ 
so $\delta =  - \alpha$, and $[Y,Z] = X + \gamma Y$.

Infinitesimal invariance of $K$ by $Y$ gives
$$ K_{b_0}([Y,X],Z) + K_{b_0}(X,[Y,Z]) = [ -  E, K_{b_0}(X,Z)].$$
But the left side is 0 because $[Y,X](0) = 0$ and $[Y,Z](0) = X(0)$.  Therefore $E$ commutes with $K_{b_0}(X, Z) \in \liep$, which means
$$ K_{b_0} (X,Z) = rE \qquad \mbox{for some } r \in \RR.$$

Now equation (\ref{eqn.curv.killing})  gives for $X$ and $Z$,
\begin{eqnarray*}
\omega_{b_0} [X,Z] & = & [ h + \gamma H - \alpha F, e + \alpha H]  + rE  \\
& = & \gamma e + \alpha h -  \alpha^2 F + rE.
\end{eqnarray*}

In order that this element belongs to $\omega_{b_0} (\lieg)$, one must have $\alpha = 0$ or $\gamma = 0$.  First consider $\gamma = 0$.   The structure of the algebra $\lieg$ in the basis $( X,Y, Z )$ is
\begin{eqnarray*}
\mbox{ad } Y = 
\left( 
\begin{array}{ccc}
0 &   &  1 \\
\alpha   &    0 &  \\
   &   & 0
   \end{array}
      \right)
   \qquad
   \mbox{ad } X = 
   \left(
   \begin{array}{ccc}
   0 &   &  \\
     &  - \alpha & r  \\
     &   & \alpha
     \end{array}
     \right)
     \qquad
     \mbox{ad } Z = 
     \left(
     \begin{array}{ccc}
    0 & -1 & \\
    -r & 0 &  \\
    - \alpha  &  & 0
      \end{array}
      \right).
      \end{eqnarray*}
This algebra is unimodular, so this case does not arise, by Lemma~\ref{3}.

Next consider $\alpha = 0$.  Then the Lie algebra is
\begin{eqnarray*}
\mbox{ad } Y = 
\left( 
\begin{array}{ccc}
0 &  & 1   \\
   &    0 & \gamma \\
   &   & 0
   \end{array}
      \right)
   \qquad
   \mbox{ad } X = 
   \left(
   \begin{array}{ccc}
 0 &   & \gamma \\
     &  0 & r  \\
     &   & 0
     \end{array}
     \right)
     \qquad
     \mbox{ad } Z = 
     \left(
     \begin{array}{ccc}
    - \gamma & -1 & \\
    -r &      -\gamma  &  \\
        &          & 0
      \end{array}
      \right).
      \end{eqnarray*}
In order that $\lieg$ not be unimodular, $\gamma$ must be nonzero (notice also that for  $\gamma = r = 0$, we would get a Heisenberg algebra).  
We obtain a solvable Lie algebra
$$ \lieg \cong \RR \ltimes_\varphi \RR^2, \qquad \mbox{where} \ 
\varphi = 
\left(
\begin{array}{cc}
-\gamma & -1 \\
-r & - \gamma
\end{array} 
\right).
$$

If $r > 0$, then 
$$ \lieg \cong \mathfrak{sol}(a,b), \qquad \mbox{where } a = - \gamma + \sqrt{r}, \ b = - \gamma - \sqrt{r}.$$
Conversely, $\varphi$ is $\RR$-diagonalizable only if $r > 0$.
\end{proof}

\begin{proposition} (compare Proposition \ref{eigenvalues} ($i$))
\ \ \
\begin{enumerate}[(i)]
\item At points of $S$, there is only one eigenvalue of the Ricci operator. 

\item This triple eigenvalue is positive if and only if the Killing algebra $\mathfrak{sol}(a,b)$  is $\RR$-diagonalizable.
\end{enumerate}
\end{proposition}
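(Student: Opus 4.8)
The plan is to determine the Ricci operator at a point of $S$ directly from the single curvature value $\kappa_{b_0}(e,h)=rE$ produced in the proof of Proposition~\ref{solab}, exploiting that the unipotent isotropy forces $\kappa_{b_0}$ to be annihilated by $E$. I would work at the frame $b_0\in\pi^{-1}(0)$ and in the basis $(X,Y,Z)$ found there (the case $\alpha=0$), so that $\omega_{b_0}(Y)=E$, $\overline{\omega_{b_0}(X)}=e$, $\overline{\omega_{b_0}(Z)}=h$, and $K_{b_0}(X,Z)=\kappa_{b_0}(e,h)=rE$. Since $Y$ vanishes at the origin it is vertical at $b_0$, equal to $E^\ddag$, so invariance of the curvature under the Killing field $Y$ reads $E^\ddag.\kappa_{b_0}=-E\cdot\kappa_{b_0}=0$. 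Because $\lieg$ acts transitively on $S$ and $\kappa$ is automorphism-invariant, the conjugacy class of the Ricci operator is constant along $S$, so it suffices to analyze $A(0)=(\varphi\circ\kappa)(b_0)$ at this single frame.

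For (i), equivariance of $\varphi$ converts $E\cdot\kappa_{b_0}=0$ into $[E,A(0)]=0$, where $E$ acts on $\mbox{End}\,\RR^3$ by conjugation. The matrix $E$ is a regular nilpotent---a single $3\times 3$ Jordan block, as $E^2\neq 0=E^3$---so its centralizer in $\mbox{End}\,\RR^3$ is the polynomial algebra $\RR[E]=\mbox{span}\{\mbox{Id},E,E^2\}$. Hence $A(0)=\lambda\,\mbox{Id}+\mu E+\nu E^2$, and since $E,E^2$ are nilpotent, $A(0)$ has the single eigenvalue $\lambda$, which proves (i). (The $g$-self-adjointness of $A(0)$ forces $\mu=0$, as $E$ is $\mathbb{I}$-antisymmetric while $\mbox{Id}$ and $E^2$ are symmetric; this recovers the normal form of Proposition~\ref{eigenvalues}, but is not needed for (i).)

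For (ii), I would extract $\lambda$ from $\kappa_{b_0}(e,h)=rE$ by inverting $\varphi$. Writing $A(0)=\lambda\,\mbox{Id}+\nu E^2=\tfrac{\lambda}{2}m_d-\nu m_{e^2}$ (using $m_d=2\,\mbox{Id}$ and $E^2=-m_{e^2}$) and applying the isomorphism $\varphi$ from $\mathbb{W}$ onto $E_0\oplus E_2$ via the table of Section~\ref{subsection.curv.rep} gives $\kappa_{b_0}=\tfrac{\lambda}{2}(h^*\wedge e^*\otimes F+e^*\wedge f^*\otimes H+f^*\wedge h^*\otimes E)-\nu\,e^*\wedge h^*\otimes E$. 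Evaluating on the ordered pair $(e,h)$, the $m_{e^2}$-term $e^*\wedge h^*\otimes E$ vanishes and only the summand $f^*\wedge h^*\otimes E$ survives, yielding $\kappa_{b_0}(e,h)=\tfrac{\lambda}{2}E$. Comparison with $rE$ then forces $\lambda=2r$.

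Since $\lambda=2r$ is a positive multiple of $r$, the triple eigenvalue is positive exactly when $r>0$. By the final lines of the proof of Proposition~\ref{solab}, the structure matrix of $\lieg$ has characteristic polynomial $(t+\gamma)^2-r$, with eigenvalues $-\gamma\pm\sqrt{r}$, so $\mathfrak{sol}(a,b)$ is $\RR$-diagonalizable precisely when $r>0$; combining the two statements gives (ii). I expect the only delicate point to be the representation-theoretic bookkeeping in the previous step: locating $A(0)$ inside the curvature module $\mathbb{W}$ and tracking normalizations through $\varphi$ and the table so as to pin down the relation $\lambda=2r$---in particular its sign, which is exactly what makes positivity of the eigenvalue correspond to $\RR$-diagonalizability rather than to its failure.
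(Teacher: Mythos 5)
Your proposal is correct and follows essentially the same route as the paper: both use $E$-invariance of the curvature to pin $\bar{\kappa}_{b_0}$ (equivalently the Ricci operator) into $\mathrm{span}\{m_d, m_{e^2}\}$, and both extract the triple eigenvalue from the relation $\kappa_{b_0}(e,h)=rE$ by observing that only the $m_d$-component of $\mathbb{W}$ contributes to the pair $(e,h)$, yielding eigenvalue $2r$ and hence positivity iff $r>0$ iff $\mathfrak{sol}(a,b)$ is $\RR$-diagonalizable. Your computation of the centralizer of the regular nilpotent $E$ is just a self-contained derivation of what the paper reads off from its table.
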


\begin{proof}  (i) The invariance of the Ricci endomorphism $\bar{\kappa}_{b_0}$ by $E$ means (see the table in Subsection \ref{subsection.curv.rep}):
$$\bar{\kappa}_{b_0}  \in \mbox{span} \{m_d,  m_{e^2} \}.$$ 
The triple eigenvalue is the coefficient of $m_d$.

(ii) The full curvature $\kappa_{b_0} \in \mathbb{W}$ is $E$-invariant, so it is in the span of the elements of $\mathbb{W}$ corresponding to $m_d$ and $m_{e^2}$. 
Referring to the column labeled $\wedge^2 \RR^{2,1*} \otimes \liep$ in the table reveals that $m_d$ is the only of these two components of $\kappa_{b_0}$ possibly assigning a nonzero value to the input pair $(e,h)$. 
Therefore the parameter $r$ in the proof of Proposition~\ref{solab} coincides with the coefficient of the element corresponding to $m_d$ in $\kappa_{b_0}$ and with half the triple eigenvalue of the Ricci endomorphism at $0$.
\end{proof}

But, by the point (iii) in Proposition~\ref{surface},   we know that the Killing algebra   $\mathfrak{sol}(a,b)$  is $\RR$-diagonalizable. This implies that $r >0$.

On the other hand, recall that in~\cite{CK} Calvaruso and Kowalski classified   Ricci operators for left-invariant  Lorentz metrics $g$ on three-dimensional Lie groups. In particular, they proved (see their Theorems 3.5, 3.6 and  3.7)
 that a Ricci operator  of a left-invariant Lorentz metric on a {\it nonunimodular} three-dimensional Lie group admits a triple eigenvalue $r \neq 0$ if and only if   $g$ is of constant sectional curvature. Since on $U \setminus S$, our Lorentz metric $g$  is locally isomorphic to a left-invariant Lorentz metric on the nonunimodular Lie group $SOL(a,b)$ corresponding  to the Killing algebra, this implies that $g$ is of constant sectional curvature. In particular, $g$ is locally homogeneous.

\bibliography{references}

\begin{thebibliography}{PTV96}

\bibitem[Amo79]{Amores}
A.~M. Amores.
\newblock Vector fields of a finite type {$G$}-structure.
\newblock {\em J. Differential Geom.}, 14(1):1--6 (1980), 1979.

\bibitem[Ben97]{Benoist2}
Yves Benoist.
\newblock Orbites des structures rigides (d'apr\`es {M}. {G}romov).
\newblock In {\em Integrable systems and foliations/{F}euilletages et
  syst\`emes int\'egrables ({M}ontpellier, 1995)}, volume 145 of {\em Progr.
  Math.}, pages 1--17. Birkh\"auser Boston, Boston, MA, 1997.

\bibitem[BF05]{BF}
E.~Jerome Benveniste and David Fisher.
\newblock Nonexistence of invariant rigid structures and invariant almost rigid
  structures.
\newblock {\em Comm. Anal. Geom.}, 13(1):89--111, 2005.

\bibitem[BFL92]{BFL}
Yves Benoist, Patrick Foulon, and Fran{\c{c}}ois Labourie.
\newblock Flots d'{A}nosov \`a distributions stable et instable
  diff\'erentiables.
\newblock {\em J. Amer. Math. Soc.}, 5(1):33--74, 1992.

\bibitem[Cal07]{calv_einstein}
Giovanni Calvaruso.
\newblock Einstein-like metrics on 3-dimensional homogeneous {L}orentzian
  manifolds.
\newblock {\em Geom. Ded.}, 127:99--119, 2007.

\bibitem[CK09]{CK}
Giovanni Calvaruso and Old{\v{r}}ich Kowalski.
\newblock On the {R}icci operator of locally homogeneous lorentzian
  3-manifolds.
\newblock {\em CEJM}, 7(1):124--139, 2009.

\bibitem[CP97]{cordero_parker}
Luis~A. Cordero and Phillip~E. Parker.
\newblock Left invariant {L}orentzian metrics on 3-dimensional {L}ie groups.
\newblock {\em Rend. Math. Appl.(7)}, 17(1):129--155, 1997.

\bibitem[DG91]{DG}
G.~D'Ambra and M.~Gromov.
\newblock Lectures on transformation groups: geometry and dynamics.
\newblock In {\em Surveys in differential geometry ({C}ambridge, {MA}, 1990)},
  pages 19--111. Lehigh Univ., Bethlehem, PA, 1991.

\bibitem[DG13]{dumiguillot}
Sorin Dumitrescu and Adolfo Guillot.
\newblock Quasihomogeneous real analytic connections on surfaces.
\newblock {\em J. Topol Anal.}, 5(4):491--532, 2013.

\bibitem[Dum08]{Dumitrescu}
Sorin Dumitrescu.
\newblock Dynamique du pseudo-groupe des isom\'etries locales sur une
  vari\'et\'e lorentzienne analytique de dimension 3.
\newblock {\em Ergodic Theory Dynam. Systems}, 28(4):1091--1116, 2008.

\bibitem[DZ10]{dumzeg}
Sorin Dumitrescu and Abdelghani Zeghib.
\newblock G\'eom\'etries lorentziennes de dimension trois : classification et
  compl\'etude.
\newblock {\em Geom Dedicata}, 149:243--273, 2010.

\bibitem[Fer02]{Feres}
Renato Feres.
\newblock Rigid geometric structures and actions of semisimple {L}ie groups.
\newblock In {\em Rigidit\'e, groupe fondamental et dynamique}, volume~13 of
  {\em Panor. Synth\`eses}, pages 121--167. Soc. Math. France, Paris, 2002.

\bibitem[Gro88]{Gro}
Michael Gromov.
\newblock Rigid transformations groups.
\newblock In {\em G\'eom\'etrie diff\'erentielle ({P}aris, 1986)}, volume~33 of
  {\em Travaux en Cours}, pages 65--139. Hermann, Paris, 1988.

\bibitem[Kir74]{Kir}
A.~Kirilov.
\newblock {\em El\'ements de la th\'eorie des repr\'esentations}.
\newblock M.I.R., Moscou, 1974.

\bibitem[KN96]{kobayashi-nomizu}
Shoshichi Kobayashi and Katsumi Nomizu.
\newblock {\em Foundations of differential geometry. {V}ol. {I}}.
\newblock Wiley Classics Library. John Wiley \& Sons Inc., New York, 1996.
\newblock Reprint of the 1963 original.

\bibitem[Mel09]{Melnick}
Karin Melnick.
\newblock Compact {L}orentz manifolds with local symmetry.
\newblock {\em J. Differential Geom.}, 81(2):355--390, 2009.

\bibitem[Mel11]{Melnick2}
Karin Melnick.
\newblock A {F}robenius theorem for {C}artan geometries, with applications.
\newblock {\em Enseign. Math. (2)}, 57(1-2):57--89, 2011.

\bibitem[Nom60]{Nomizu}
Katsumi Nomizu.
\newblock On local and global existence of {K}illing vector fields.
\newblock {\em Ann. of Math. (2)}, 72:105--120, 1960.

\bibitem[Nom79]{nomizu_lorentz}
Katsumi Nomizu.
\newblock Left invariant {L}orentz metrics on {L}ie groups.
\newblock {\em Osaka J. Math.}, 16:143--150, 1979.

\bibitem[Pec14]{Pecastaing}
Vincent Pecastaing.
\newblock On two theorems about local automorphisms of geometric structures.
\newblock {\em Arxiv 1402.5048}, 2014.

\bibitem[PTV96]{Tri}
Friedbert Pr{\"u}fer, Franco Tricerri, and Lieven Vanhecke.
\newblock Curvature invariants, differential operators and local homogeneity.
\newblock {\em Trans. Amer. Math. Soc.}, 348(11):4643--4652, 1996.

\bibitem[Ros61]{Rosenlicht}
M.~Rosenlicht.
\newblock On quotient varieties and the affine embedding of certain homogeneous
  spaces.
\newblock {\em Trans. Amer. Math. Soc.}, 101:211--223, 1961.

\bibitem[Sha97]{Sharpe}
R.~W. Sharpe.
\newblock {\em Differential geometry}, volume 166 of {\em Graduate Texts in
  Mathematics}.
\newblock Springer-Verlag, New York, 1997.
\newblock Cartan's generalization of Klein's Erlangen program, With a foreword
  by S. S. Chern.

\bibitem[Sin60]{Singer}
I.~Singer.
\newblock Infinitesimally homogeneous spaces.
\newblock {\em Comm. Pure Appl. Math.}, 13:685--697, 1960.

\bibitem[Thu97]{thurston}
William~P. Thurston.
\newblock {\em Three-dimensional geometry and topology. {V}ol. 1}, volume~35 of
  {\em Princeton Mathematical Series}.
\newblock Princeton University Press, Princeton, NJ, 1997.
\newblock Edited by Silvio Levy.

\bibitem[Wol67]{Wolf}
Joseph~A. Wolf.
\newblock {\em Spaces of constant curvature}.
\newblock McGraw-Hill Book Co., New York, 1967.

\bibitem[Zeg96]{Zeghib}
Abdelghani Zeghib.
\newblock Killing fields in compact {L}orentz {$3$}-manifolds.
\newblock {\em J. Differential Geom.}, 43(4):859--894, 1996.

\end{thebibliography}
\bibliographystyle{alpha}

\end{document}